\theoremstyle{definition}
\newtheorem{theorem}{Theorem}[section]
\newtheorem{corollary}[theorem]{Corollary}
\newtheorem{lemma}[theorem]{Lemma}
\newtheorem{proposition}[theorem]{Proposition}
\newtheorem{conjecture}[theorem]{Conjecture} 
\newtheorem{definition}[theorem]{Definition}
\newtheorem{remark}[theorem]{Remark}
\numberwithin{equation}{section}
\numberwithin{theorem}{section}
\definecolor{darkgreen}{rgb}{0,0.5,0}
\definecolor{darkblue}{rgb}{0,0.1,0.5}
\newcommand{\Id}{\mathrm{Id}}
\newcommand{\End}{\mathrm{End}}
\newcommand{\Hom}{\mathrm{Hom}}
\newcommand{\mcC}{\mathcal{C}}
\newcommand{\mcN}{\mathcal{N}}
\newcommand\doi[2]{\href{http://dx.doi.org/#1}{#2}}
\newcommand{\Gr}{{\mathsf G}}
\newcommand{\kk}{\Bbbk}
\newcommand{\C}{\mathbb C}
\newcommand{\Z}{\mathbb Z}
\newcommand{\unit}{\mathds{1}}
\newcommand{\ideal}{\mathcal{I}} 
\newcommand{\mt}{\operatorname{\mathsf{t}}}
\newcommand{\rk}{\operatorname{rank}}
\newcommand{\ptr}{\operatorname{ptr}}
\newcommand{\qd}{\operatorname{\mathsf{d}}}
\newcommand{\catb}{\mathscr{B}} 
\newcommand{\Zt}{\ensuremath{\mathsf{Z}}}
\newcommand{\XX}{\ensuremath{\mathsf{X}}}
\newcommand{\wb}[1]{\overline{#1}}
\newcommand{\bp}[1]{\left( {#1}\right)}
\newcommand{\ms}[1]{\small \ensuremath{#1}}
\newcommand{\cat}{\mathcal{C}}
\newcommand{\epsh}[2]
         {\begin{array}{c} \hspace{-1.3mm}
        \raisebox{-4pt}{\epsfig{figure=#1,height=#2}}
        \hspace{-1.9mm}\end{array}}
\setlist[itemize]{parsep=0pt}
\setlist[enumerate]{parsep=0pt}
\author{Nathan Geer, Bertrand Patureau-Mirand, Matthew Rupert}
\begin{document}

\title{Some remarks on relative modular categories}
\maketitle

\abstract{We study properties of relative modular categories and derive sufficient conditions for their existence. In particular, we derive sufficient conditions for relative pre-modular categories to be non-degenerate and relative modular, and for the quotient of categories taking a particular form by their ideal of negligible modules to be generically semisimple.}

\tableofcontents

\section{Introduction} 

\setlength\parskip{1em}
\setlength\parindent{0pt}

In \cite{CGP14} Costantino and the first two authors constructed a class of non-semisimple quantum invariants of closed $3$-manifolds of Witten-Reshetikhin-Turaev type. They introduced therein the notion of a non-degenerate relative pre-modular category (under a slightly different name) which was used to construct these invariants. In \cite{D17}, Marco De Renzi found an additional condition which extended the $3$-manifold invariants of \cite{CGP14} to topological quantum field theories (see also \cite{BCGP16} for the case of $sl(2)$).  Relative pre-modular categories satisfying this condition are called relative modular categories (see Definition \ref{D:1} \& \ref{D:2}). Relative modular and pre-modular categories are non-finite non-semisimple categories satisfying numerous conditions on their structure. The theory of relative modular categories has been used to construct topological quantum field theories associated to various categories arising from unrolled quantum groups. The categories of weight modules for the unrolled quantum groups $U_{\xi}^H(\mathfrak{g})$ for $\mathfrak{g}$ any simple finite-dimensional complex Lie algebra and for $U_{\xi}^H\mathfrak{sl}(m|n)$ at odd root of unity $\xi$ are relative modular. We expect that these properties should hold for categories associated to other super unrolled quantum groups as well. 

Mikhaylov and Witten in \cite{MWi} consider super-group Chern-Simons theories (also see \cite{AGPS, Mik2015}).  An interesting questions is: ``As in the case of Witten's QFT interpretation of the Jones polynomial, are there quantum topology type invariants which are perturbative versions of the invariants discussed in \cite{MWi}?''  
Potential examples of such invariants are conjectured in Section 5 of \cite{AGP} and come from quotient categories of perturbative modules over quantum super Lie algebras.  
 However, the results of \cite{AGP} only give 3-manifold invariants in the case of quantum $\mathfrak{sl}(2|1)$ and 
 it is not known if these 3-manifold invariants extend to TQFTs.  
 The purpose of this paper is to generalize the techniques of \cite{AGP} and give sufficient conditions for when relative pre-modular category lead to 3-manifold invariants and TQFTs.    

The techniques of \cite{AGP} and sufficient conditions given in this paper are generalizations of standard techniques used in semi-simple categories.  However, non-semi-simple categories have vanishing quantum dimensions which make it necessary to make non-trivial modifications:  loosely speaking, the standard way to obtain a modular category from a quantum group is: 1) specialize $q$ to a root of unity; this forces some modules to have zero quantum trace, 2) quotient by \emph{negligible morphisms} of modules with zero quantum trace, 3) show the resulting category is finite and semi-simple. In \cite{AGP}, this construction is modified in the case of quantum super Lie algebras by taking the quotient by negligible morphisms corresponding to a non-zero modified trace (see \cite{GKP1}).  One of the main results of this paper is to give sufficient conditions for when this quotient construction gives 3-manifold invariants and TQFTs.  These conditions are described in terms of generalized $S$-matrices and M\"{u}ger centers.  

 In particular, in Section \ref{SS:ND} we derive sufficient non-degeneracy conditions on the modified $S$-matrix for a relative pre-modular category to be non-degenerate.  In Section \ref{relmod} we derive sufficient conditions (on the modified $S$-matrix and M\"{u}ger center)  for a relative pre-modular category to be relative modular.   In Section \ref{gens}, we determine sufficient conditions for when the quotient of a category by its ideal of negligible morphisms will be generically semisimple.  Finally, in Section \ref{SS:U} we prove that Conjecture 5 of \cite{AGP} is false and speculate on refinements of this conjecture.

\subsection{Results}
We now give a more precise summary of our results, for preliminary definitions see Section \ref{S:Preliminaries}. Throughout, non-degeneracy of a category refers to non-degeneracy as defined in Definition \ref{D:2} and an object in $\cat$ is called generic if it belongs to a subcategory $\cat_g$ with generic index $g \in \Gr \setminus \XX$ where $\Gr$ is the grading on $\cat$ and $\XX$ the small symmetric subset of $\Gr$ (see Definition \ref{smallsymm}). Let $S_g=(S_{ij})_{i,j\in I_g}$ be the modified $S$-matrix where $S_{ij}=\qd(V_j)S'_{ij}$ and $S'_{ij}$ is the scalar obtained by taking the partial trace of the double braiding with respect to $V_i,V_j$ (see Equation \ref{eq:Sprime}) and $\qd(V_j)$ the modified dimension of $V_j$.  Then our main results are (in text below this is Theorem \ref{T:Pre-modNonDeg} and \ref{degenmod}, respectively):

\begin{theorem}
  A pre-modular $\Gr$-category $\mcC$ is non-degenerate if there
  exists $g \in \Gr \setminus \XX$ such that the corresponding
  $S$-matrices $S_g$ and $S_{-g,g}$ are non-degenerate.
\end{theorem}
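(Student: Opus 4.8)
The plan is to first reduce non-degeneracy in the sense of Definition~\ref{D:2} to a statement about transparent objects: after unwinding that definition, it should suffice to show that every generic simple object $V$ of $\mcC$ whose double braiding with every object of $\mcC$ is trivial — i.e.\ every generic simple object lying in the relevant relative M\"uger center — is isomorphic to one of the invertible ``periodicity'' objects of $\mcC$. I would carry out this reduction using the structural results recalled in Section~\ref{S:Preliminaries}, together with the fact that $\XX$ is small and symmetric (Definition~\ref{smallsymm}), which guarantees that the generic cosets cover enough of $\Gr$ and that the chosen generic $g$ may be translated within its periodicity class without affecting the hypotheses.

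Next, fix a generic $g \in \Gr \setminus \XX$ with $S_g$ and $S_{-g,g}$ non-degenerate, and let $V \in \mcC_h$ be a transparent generic simple object. The central step is to translate transparency of $V$ into linear relations among the rows and columns of $S_g$ and $S_{-g,g}$. Since $\mcC_g$ and $\mcC_{-g}$ are semisimple with finite index sets $I_g$ and $I_{-g}$, and since the double braiding of $V$ with any object of $\mcC_{\pm g}$ is trivial, the $V$-strand can be unlinked in every Hopf-link configuration; using the definition of $S'$ in Equation~\eqref{eq:Sprime} and the behaviour of the modified dimension $\qd$ under this unlinking, this forces the vector of modified $S$-invariants of $V$ against $I_g$ (respectively $I_{-g}$) to be proportional to the corresponding vector obtained from the unit object (respectively from the appropriate periodicity object). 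In other words, $V$ becomes indistinguishable from a periodicity object under all the pairings recorded by $S_g$ and $S_{-g,g}$.

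I would then conclude as in the classical modular case. The proportionality just obtained, combined with invertibility of $S_g$ and of $S_{-g,g}$, forces $h$ into the periodicity lattice and forces $V$ itself to be the corresponding invertible object. The two matrices play complementary roles: $S_g$ controls the behaviour inside $\mcC_g$ and its translates, while $S_{-g,g}$ — through the duality identifying $\mcC_g$ with $\mcC_{-g}^{\mathrm{op}}$ and a composition identity for the generalized $S$-matrices of the type underlying the Verlinde formula — controls the opposite component and removes the remaining ambiguity. A final bookkeeping step, again using smallness of $\XX$ and the periodicity structure, promotes the conclusion for this particular $g$ to the full statement of Definition~\ref{D:2}.

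The main obstacle I expect is the second step: converting ``trivial monodromy'' into the precise linear-algebraic proportionality, because $\mcC$ is non-semisimple and its categorical dimensions vanish, so one cannot simply quote the semisimple computation — every trace must be read as a modified trace and every dimension as a modified dimension, and one must verify that the resulting constants are nonzero exactly where they are needed. A secondary difficulty is that the natural Hopf-link matrix between an arbitrary component $\mcC_h$ and $\mcC_g$ need not be square, so the argument cannot invert it directly; instead the comparison must be routed through the two given square matrices $S_g$ and $S_{-g,g}$, and this is precisely where the form of the hypothesis is used.
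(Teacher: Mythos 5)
Your first step already goes astray: ``non-degenerate'' in this theorem is \emph{not} a M\"uger-center condition. By Definition~\ref{d:ndeg}, a relative pre-modular category is non-degenerate precisely when the two scalars $\Delta_{+}$ and $\Delta_{-}$ — defined by coupling an arbitrary generic simple object to the Kirby color via the $\pm1$-framed unknot — satisfy $\Delta_{+}\Delta_{-}\neq 0$. This is a scalar non-vanishing statement, not an assertion that every transparent generic simple object is a periodicity object. (That latter statement is the kind of hypothesis that feeds into the \emph{modularity} result, Theorem~\ref{degenmod}, and even there M\"uger triviality of $\cat_0$ is an extra assumption, not something implied by non-degeneracy of the $S$-matrices alone: Proposition~\ref{DMug} needs several additional hypotheses to force it.) So the reduction you describe does not unwind the correct definition, and the plan of showing that transparent simples are periodicity objects would neither be a consequence of the $S$-matrix hypotheses nor, even if granted, directly deliver $\Delta_{+}\Delta_{-}\neq 0$.

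The paper's actual argument is much shorter and entirely linear-algebraic. Taking $V=V_j$ in Definition~\ref{d:ndeg} and expanding the Kirby color, one gets the explicit formula $\Delta_{-}=t_j^{-1}\sum_{i\in I_g}S'_{ij}\,t_i^{-1}\qd(V_i)$, where $t_i$ are twist eigenvalues. Since $\Delta_{-}$ is independent of $j$, vanishing of $\Delta_{-}$ would mean the whole vector $\bigl(t_j^{-1}\sum_i S'_{ij}t_i^{-1}\qd(V_i)\bigr)_j$ is zero; after peeling off the invertible diagonal matrix of twists, this says $(S'_{ij})$ annihilates the nonzero column vector $(t_i^{-1}\qd(V_i))_i$, so $S'_g$ (hence $S_g$, by Lemma~\ref{L:Snon-degS'}) would be degenerate, contradicting the hypothesis. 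The identical computation with the positively framed unknot and the matrix $S_{-g,g}$ rules out $\Delta_{+}=0$. Your proposal never touches this computation — the entire content of the theorem is that $\Delta_{\pm}$ is a nonvanishing linear functional of the rows of $S'$ — and the obstacles you flag (square vs. non-square Hopf-link matrices, vanishing categorical dimensions) do not arise in the correct argument because one works directly with the two given square $S$-matrices and the modified dimension $\qd$, which is built into the definition of $\Delta_{\pm}$.
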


\begin{theorem}\label{degenmodintro}
  Let $\mcC$ be a relative pre-modular category.
\begin{enumerate}
\item If $\mcC$ is relative modular, then $S_g$ is non-degenerate for some generic index $g \in \Gr \setminus \XX$.
\item If $\mcC$ is unimodular and $S_g$ is non-degenerate for some generic index $g \in \Gr \setminus \XX$ and $\cat_0$ has $\Zt$-trivial Müger center, then $\mcC$ is relative modular.
\end{enumerate}
\end{theorem}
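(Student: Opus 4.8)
The two assertions are near-converses, and both run on the standard dictionary between the Hopf-link pairing recorded by $S_g$, transparency of objects under the double braiding, and Müger centers, together with generic semisimplicity: inside a generic block $\mcC_g$ the Hopf-link pairing is exactly the bilinear form with matrix $(S'_{ij})_{i,j\in I_g}$ on the $\kk$-span of the simples $\{V_i\}_{i\in I_g}$, and $S_g$ differs from it only by rescaling each column by the nonzero scalar $\qd(V_j)$. One preliminary observation is used repeatedly: by duality $S_{-g,g}$ is obtained from $S_g$ by the relabeling $i\mapsto i^{\ast}$ of $I_{-g}\cong I_g$ together with the identity $\qd(V^{\ast})=\qd(V)$, and this identity holds precisely because $\mcC$ is unimodular (otherwise it would be twisted by the distinguished invertible object). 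Hence, granting unimodularity, $S_g$ is non-degenerate if and only if $S_{-g,g}$ is.

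For part (1), I would unwind the additional condition defining relative modularity among relative pre-modular categories; it amounts to the relative Müger center of $\mcC$ --- the full subcategory of objects transparent to every generic object --- being generated by the free realization $\{\epsilon_t\}$ (and in the formulation where this is instead phrased directly as invertibility of the $S_g$, part (1) is an immediate unwinding). In the former formulation I argue contrapositively: if $S_g$ is degenerate for every generic $g$, fix one such $g$ and a nonzero $\sum_i a_i[V_i]\in\ker S_g$; by semisimplicity of $\mcC_g$ and the standard relation between the Hopf-link pairing and double braidings of simples, this vector is realized by an object whose double braiding with every simple of $\mcC_g$ is trivial. The substantive step is to upgrade this block-local transparency to membership in the relative Müger center, using the grading fusion $\mcC_h\otimes\mcC_g\subseteq\mcC_{h+g}$ and translation by the $\epsilon_t$ to transport the vanishing across the remaining generic blocks; this yields a transparent object not equivalent to any sum of $\epsilon_t$'s, contradicting relative modularity.

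For part (2), start from $S_g$ non-degenerate for some generic $g$. By the preliminary observation and unimodularity, $S_{-g,g}$ is also non-degenerate, so \Cref{T:Pre-modNonDeg} applies and $\mcC$ is non-degenerate in the sense of \Cref{D:2}; this is the first of the two requirements for relative modularity beyond relative pre-modularity. The second is triviality of the relative Müger center, which I verify using the two remaining hypotheses: given $X$ in that Müger center, decompose it along the grading; each component of $X$ in a generic degree is annihilated because non-degeneracy of $S_g$, propagated to the other generic blocks by translation through the $\epsilon_t$ and semisimplicity, leaves no nonzero transparent vector, while the degree-$0$ component lies in the Müger center of $\cat_0$, which is $\Zt$-trivial by hypothesis, hence a sum of $\epsilon_t$'s. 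Thus $X$ is a sum of $\epsilon_t$'s, the relative Müger center is the free realization, and $\mcC$ is relative modular.

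The principal obstacle is exactly the interface between the non-semisimple block $\cat_0$ and the semisimple generic blocks: the modified $S$-matrix perceives only the generic part of $\mcC$, so the degree-$0$ contribution to transparency is invisible to it, which is why part (2) must take ``$\cat_0$ has $\Zt$-trivial Müger center'' as a separate input and why, in part (1), the delicate point is the patching of block-local transparency into a single global transparent object. Two technical points need care inside this: justifying, in the non-semisimple modified-trace setting, that the degree-$0$ component of a relatively transparent object is constrained by the Müger center of $\cat_0$ alone rather than through some interaction with the generic blocks; and verifying that unimodularity genuinely removes the left-versus-right modified-trace discrepancy underlying the symmetry and duality properties of $S_g$ on which the whole argument rests.
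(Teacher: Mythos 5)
Your proof substitutes a different definition of relative modularity for the one the paper actually uses, and this is where the gap lies. In Definition~\ref{D:2}, relative modularity is the diagrammatic condition~\eqref{eq:mod}: for all generic $g,h$ and all $i,j\in I_g$, the morphism $\qd(V_i)\,\Phi_{\Omega_h,V_i\otimes V_j^*}$ equals $\delta_{ij}\,\zeta_\Omega\,(\overleftarrow{\mathrm{coev}}_{V_i}\circ\overrightarrow{\mathrm{ev}}_{V_i})$ for a single nonzero scalar $\zeta_\Omega$. You instead work with a ``relative Müger center is generated by the free realization'' reformulation and treat the two as interchangeable. That equivalence is not established anywhere in the paper, is not obvious in the non-semisimple setting, and is in fact a large part of what the actual proof accomplishes. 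The real content of part~(2) is to go from ``$S_g$ invertible, $\cat_0$ has $\Zt$-trivial Müger center, $\mcC$ unimodular'' to the equality of morphisms in~\eqref{eq:mod}, and this requires three concrete steps: (i)~the handle-slide property from \cite[Lemma~5.9]{CGP14} shows each $\Phi_{ij,g}^h$ is transparent in $\mcC_0$; (ii)~the $\Zt$-triviality of the Müger center of $\cat_0$ together with Lemma~\ref{factor} forces $\Phi_{ij,g}^h$ to factor through $\sigma(k)$'s, hence to vanish unless $i=j$ and to be a scalar multiple of $\overleftarrow{\mathrm{coev}}_{V_i}\circ\overrightarrow{\mathrm{ev}}_{V_i}$, with the non-vanishing of that scalar extracted from invertibility of $S_g$ by a matrix computation; (iii)~the scalar $\zeta_{i,g}^h$ is shown to be independent of $i,g,h$ by decomposing $V_i\otimes V_i^*$ into indecomposable projectives and using unimodularity to conclude $\dim\Hom(\sigma(0),P_0)=1$, which isolates $\mt_{P_0}(S'_{P_0})$ and identifies $\zeta=\Delta_+\Delta_-$. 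None of this is reflected in your argument, which simply asserts triviality of a relative Müger center and stops.

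Two further points. First, you claim that $\qd(V^*)=\qd(V)$ ``holds precisely because $\mcC$ is unimodular''; this misplaces the role of unimodularity. In a ribbon category the m-trace is two-sided and $\qd(V^*)=\qd(V)$ holds without any unimodularity hypothesis. Unimodularity enters only in step~(iii) above, to pin down the Hom-space dimension $\dim\Hom(\sigma(0),P_0)=1$ that makes the constant $\zeta$ well-defined across all $i,g,h$. Second, your part~(1) is a contrapositive that again routes through the unproven Müger-center reformulation and requires a ``patching'' of block-local transparency into a global transparent object that you acknowledge but do not perform. The paper's argument for part~(1) is far more direct: take the m-trace of both sides of~\eqref{eq:mod} with $V_i=V_j$, sum over the Kirby color, and read off that $\tfrac{1}{\zeta_\Omega}S_{g,h}S_{h,-g}$ is the identity matrix; specializing $h=g$ gives invertibility of $S_g$ (and of $S_{g,-g}$) immediately. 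You should re-anchor both directions on the diagrammatic Definition~\ref{D:2} rather than on an undemonstrated Müger-center characterization.
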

The following proposition (in the text this is Proposition \ref{DMug}) helps check the assumptions of Theorem~\ref{degenmodintro}:
\begin{proposition}
  Suppose that $\cat$ is abelian, $\{\sigma(k)\}_{k\in\Zt}$ has no self
  extension and that $\cat_0$ has $N$ simple
  $\Zt$-orbits. If for some $g\in\Gr\setminus\XX$, we have: 
\begin{enumerate}
\item $S_g$ is a $N\times N$ invertible matrix.
\item Both $S_g$ and $S_{-g,g}$ have a row with no zero entry.
\end{enumerate}
Then the Müger center of $\cat_0$ is $\Zt$-trivial. 
\end{proposition}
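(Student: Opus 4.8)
The plan is to prove that every simple object of the Müger center of $\cat_0$ is isomorphic to some $\sigma(k)$; the hypothesis that $\{\sigma(k)\}_{k\in\Zt}$ has no self extension then promotes this to $\Zt$-triviality. Write $\mcZ$ for the Müger center of $\cat_0$. First I would record that $\mcZ$ is closed under subobjects and quotients: if $E$ is transparent and $A\hookrightarrow E$ (resp.\ $E\twoheadrightarrow B$), then for every object $X$, naturality of the braiding together with $c_{X,E}c_{E,X}=\Id$ and exactness of $-\otimes X$ force $c_{X,A}c_{A,X}=\Id$ (resp.\ $c_{X,B}c_{B,X}=\Id$), since $A\otimes X\hookrightarrow E\otimes X$ is monic (resp.\ $E\otimes X\twoheadrightarrow B\otimes X$ is epic). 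Hence every composition factor of a transparent object is transparent. As $\{\sigma(k)\}_{k\in\Zt}\subseteq\mcZ$ and this family has no self extension, the full additive subcategory $\mcT$ it generates is semisimple; therefore, once we know every simple object of $\mcZ$ lies in $\mcT$, an arbitrary object of $\mcZ$ — being an iterated extension of such simples — splits and lies in $\mcT$, i.e.\ $\mcZ=\mcT$, which is exactly $\Zt$-triviality. So from now on fix a simple transparent $W\in\cat_0$.

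The key point is a hexagon-and-transparency identity. For any simple $V\in\cat_g$ and any object $V'$, the hexagon axioms express the double braiding of $V$ with $V'\otimes W$ through the crossings with $V'$ and with $W$; the two $W$-crossings compose to $c_{W,V}c_{V,W}=\Id_{V\otimes W}$ by transparency of $W$, which leaves $c_{V'\otimes W,\,V}\circ c_{V,\,V'\otimes W}=(c_{V',V}\circ c_{V,V'})\otimes\Id_W$. Taking the modified partial trace over $V$ (legitimate since $V$ is generic, so $\qd(V)\neq0$) and using that the generic subcategory $\cat_g$ is semisimple, I conclude that every simple summand $X$ of $V'\otimes W$ satisfies $S'(V,X)=S'(V,V')$ for all $V\in I_g$. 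Running the same computation with an invertible $\sigma(l)$ in place of $W$ shows the Hopf-link scalars $S'(V,-)$ are constant along $\Zt$-orbits, so that $S_g$ is the matrix indexed by these orbits; by hypothesis (i) there are $N$ of them and its $N$ columns are pairwise distinct. Hence $X$ must lie in the $\Zt$-orbit of $V'$, and consequently, for every simple $V'\in\cat_g$, both $V'\otimes W$ and $V'\otimes W^{*}$ are direct sums of objects in $\Zt\cdot V'$.

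It then remains to promote ``$W$ preserves every $\Zt$-orbit of $\cat_g$'' to ``$W\cong\sigma(k)$''. First one should show $W$ is invertible: writing $V'\otimes W\cong\bigoplus_{l}(\sigma(l)V')^{\oplus a_{l}}$, with the multiplicities of $V'\otimes W^{*}$ being the reflections $a_{-l}$ by rigidity and adjunction, one feeds $\unit\hookrightarrow W\otimes W^{*}$ and the semisimplicity of $\cat_g$ (so $V'$ is a summand of $V'\otimes W\otimes W^{*}$) into the decomposition and tests against a zero-free row of $S_g$; this should force $\sum_l a_l=1$, i.e.\ $V'\otimes W$ is a single simple of $\Zt\cdot V'$ and $W\otimes W^{*}\cong\unit$. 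Then $\Zt$-equivariance of $-\otimes W$ along the orbit torsors, together with freeness of the $\Zt$-action on simples, makes $V'\otimes W\cong\sigma(k)\,V'$ with one and the same $k$ for all $V'\in\cat_g$, and the mixed matrix $S_{-g,g}$, via its zero-free row, ties this uniform translation back to the unit orbit and yields $W\cong\sigma(k)$.

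I expect this last paragraph to be the main obstacle: ruling out ``phantom'' transparent simples $W$ that would act on $\cat_g$ as a nontrivial, non-invertible, orbit-preserving endofunctor. This is precisely where the two ``no zero entry in a row'' hypotheses are needed — rather than mere invertibility of $S_g$ — together with the multiplicativity and non-vanishing of the modified dimension on generic objects (in particular one needs that such a $W$ has nonzero quantum dimension, which itself has to be extracted from these hypotheses). Getting the bookkeeping of the $\Zt$-action on $S_g$ and $S_{-g,g}$, and of the multiplicities in $V'\otimes W$ and $V'\otimes W^{*}$, to close up is the one genuinely delicate step; everything else is the hexagon computation and a routine argument about Serre subcategories.
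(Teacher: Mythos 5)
Your plan has a genuine gap exactly where you flag it: the passage from ``tensoring by $W$ preserves every $\Zt$-orbit of simple objects in $\cat_g$'' to ``$W$ lies in the $\Zt$-orbit of $\unit$'' is only sketched, and the sketch does not close. Testing a zero-free row of $S_g$ against the decomposition $V'\otimes W\cong\bigoplus_l(\sigma(l)\otimes V')^{\oplus a_l}$ only produces relations such as $\sum_l a_l\,\veps_l=\veps_W=\sum_l a_l\,\psi(g,l)\,\veps_l$ (with $\veps_l,\veps_W$ the quantum dimensions of $\sigma(l),W$), and these do not force $\sum_l a_l=1$; and the claim that $V'$ appears in $V'\otimes W\otimes W^*$ with multiplicity one tacitly presupposes that $W\otimes W^*$ is already $\Zt$-trivial, which is the very point at issue. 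You also never say concretely how $S_{-g,g}$ enters. There is a separate, smaller error in the part you did carry out: the bridge claim that $S'(V,-)$ is constant along $\Zt$-orbits is false — $\sigma(l)$ is not transparent, and compatibility gives $S'(V_i,V'\otimes\sigma(l))=\psi(g,l)\,S'(V_i,V')$. Your intermediate conclusion $X\in\Zt\cdot V'$ still holds, but by the corrected route: writing $X\cong V_j\otimes\sigma(k)$ and $V'\cong V_{j_0}\otimes\sigma(l)$, the equality of $S'(V_i,\,\cdot\,)$-columns reads $\psi(g,k)\,S'(V_i,V_j)=\psi(g,l)\,S'(V_i,V_{j_0})$ for all $i$, and invertibility of $S_g$ forbids two distinct columns from being proportional, so $j=j_0$; one cannot literally ``reindex $S_g$ by orbits.''

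The paper sidesteps the direct classification of simple transparent objects entirely. It supposes there is a non-$\Zt$-trivial simple transparent $U\in\cat_0$, picks representatives $\{U_j\}_{j\in I_0}$ of the simple $\Zt$-orbits with $U_1=\unit$ and $U_t=U$, and forms the $N\times N$ matrix $S_0^\alpha$ with entries $\mt\bp{\Phi_{V_\alpha\otimes U_i,V_\alpha\otimes U_j}}$, where $V_\alpha$ is furnished by the zero-free row of $S_g$. Transparency of $U_t$ makes row $t$ of $S_0^\alpha$ proportional to row $1$, so $S_0^\alpha$ is degenerate. On the other hand, projectivity of $V_\alpha$, the object $V_\beta$ from the zero-free row of $S_{-g,g}$, the decompositions of $V_\alpha\otimes V_\beta\otimes V_i$ and of $V_\alpha\otimes U_k$, and the multiplicativity $\Phi_{U\otimes U',V}=\Phi_{U,V}\circ\Phi_{U',V}$ yield a factorization $S_0^\alpha=C\,{^t\!A}$ with $A$ and $C$ invertible — a contradiction. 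This is precisely where the $N\times N$ size and both no-zero-row hypotheses earn their keep, and it does the work of your ``delicate last paragraph'' all at once; the no-self-extension hypothesis and closure of transparency under subquotients then finish as in your first paragraph.
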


To construct relative and pre-relative modular categories from a category $\mathcal{C}$ admitting a modified trace we consider the full subcategory generated by some subset $S \subset \mathrm{Ob}(\mathcal{C})$ with respect to tensor products, direct sums, and retracts. This category is denoted by $\mathcal{C}^S$; we considers the quotient $\bar{\mathcal{C}}^S$ of this category by its ideal of negligible morphisms with respect to the modified trace. This method is used in \cite{AG,AGP} to construct (pre)-relative modular categories. In particular, this technique is applied to the category $\mathcal{D}^{\wp}$ of perturbative $U_{\xi}^H\mathfrak{sl}(m|n)$-modules to construct a category $\mathcal{D}^{\aleph}$ (see Subsection \ref{SS:slmn} for a precise construction). It was shown in \cite{AGP} that  $\mathcal{D}^{\aleph}$ carries the structure of a relative pre-modular category and non-degeneracy was proven for $\mathfrak{sl}(2|1)$.  Stated therein is the following conjecture:
\begin{conjecture}\label{introconj}
$\mathcal{D}^{\aleph}$ is a relative modular category.
\end{conjecture}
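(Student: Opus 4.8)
The plan is to verify the defining conditions of a relative modular category (Definitions \ref{D:1}--\ref{D:2}) for $\mathcal{D}^{\aleph}$, leveraging the machinery developed in Sections \ref{SS:ND} and \ref{relmod}. Since it was already established in \cite{AGP} that $\mathcal{D}^{\aleph}$ is a relative pre-modular category, the only remaining task is non-degeneracy together with the additional modularity condition of \cite{D17}; by Theorem \ref{degenmodintro}(2), it suffices to show that (a) $\mathcal{D}^{\aleph}$ is unimodular, (b) $S_g$ is non-degenerate for some generic index $g \in \Gr \setminus \XX$, and (c) $\cat_0$ has $\Zt$-trivial Müger center. Steps (b) and (c) can in turn be checked via Proposition \ref{DMug}, reducing everything to explicit computations with the modified $S$-matrix on the perturbative modules.

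First I would recall from \cite{AGP} the explicit structure of $\mathcal{D}^{\wp}$ and $\mathcal{D}^{\aleph}$ for $U_{\xi}^H\mathfrak{sl}(m|n)$: the grading group $\Gr$, the small symmetric subset $\XX$, the relevant simple modules indexed by a generic weight $g$, and the action of $\Zt$ (the group through which the periodicity structure $\{\sigma(k)\}_{k\in\Zt}$ is implemented). For (a), unimodularity amounts to identifying the distinguished invertible object of $\mathcal{D}^{\aleph}_0$ with the monoidal unit; this should follow from the fact that the modified trace is both left and right, equivalently from a symmetry of the highest weights of the relevant projective covers under $\Zt$, a computation already implicit in the construction of the modified trace in \cite{AGP, GKP1}. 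For (b), I would write out the partial-trace-of-double-braiding scalars $S'_{ij}$ on the perturbative modules $V_i, V_j$ in the generic block (these are essentially products of quantum exponentials of the bilinear form evaluated on weights, analogous to the $\mathfrak{sl}(2|1)$ computation in \cite{AGP}), multiply by the modified dimensions $\qd(V_j)$, and show the resulting $N \times N$ matrix $S_g$ is invertible — typically by exhibiting it as a (signed, rescaled) character table or Vandermonde-type matrix whose determinant is a non-vanishing product. For (c), I would check the two hypotheses of Proposition \ref{DMug}: that $S_g$ is $N \times N$ invertible (same as (b)), and that both $S_g$ and $S_{-g,g}$ have a row with no zero entry, the latter again reducing to non-vanishing of certain quantum exponentials at the odd root of unity $\xi$; one also needs that $\{\sigma(k)\}_{k\in\Zt}$ has no self-extension, which follows from the weight-module structure and the fact that distinct $\Zt$-translates have distinct generalized $\mathfrak{h}$-weights.

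The main obstacle I anticipate is the explicit non-degeneracy computation for $S_g$ in the super case with $m, n$ arbitrary: unlike the purely even setting, the modified dimensions $\qd(V_j)$ and the braiding scalars carry sign contributions from the super structure, and for $\mathfrak{sl}(m|n)$ with $m, n > 1$ the generic block can be larger and the weight combinatorics more intricate than the rank-one $\mathfrak{sl}(2|1)$ case handled in \cite{AGP}. Controlling the determinant — showing no cancellation occurs among the super-signed terms — is the crux; I would attack it either by a clever change of basis diagonalizing the $\Zt$-action (so $S_g$ becomes block-diagonal with one-dimensional blocks given by explicit Gauss sums), or by specializing to a generic parameter and using a continuity/Zariski-density argument to reduce to a generic-weight computation where the matrix visibly factors. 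A secondary subtlety is ensuring the passage to the negligible quotient $\bar{\mathcal{D}}^{\wp}$ does not collapse the block on which $S_g$ is computed, i.e., that the relevant simple objects remain non-negligible after quotienting — but this is guaranteed by non-vanishing of their modified dimensions, which is part of the data already verified in \cite{AGP}.
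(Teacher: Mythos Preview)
Your proposal has a fundamental problem: the conjecture is \emph{false}, and the paper is devoted to disproving it, not proving it. Specifically, the paper shows (Corollary \ref{rankintro}) that for $U_\xi^H\mathfrak{sl}(2|1)$ the modified $S$-matrix $S_g$ has rank at most $\tfrac{\ell(\ell-1)}{2}$, strictly less than its size $\ell(\ell-1)$; combined with Theorem \ref{degenmod}(1), which says that relative modularity forces $S_g$ to be non-degenerate, this kills the conjecture outright. So your step (b) --- showing $S_g$ is invertible --- cannot succeed, and your anticipated ``main obstacle'' (controlling the determinant and showing no cancellation) is not a mere technical difficulty but an actual obstruction.

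The mechanism the paper exhibits is worth noting, since it explains precisely where your Vandermonde/character-table heuristic breaks down. There is a simple module $A=A_{\ell-1}$ (a quantum deformation of the super-symmetric power $S^{\ell-1}(\mathbb{C}^{2|1})$) which, after passing to the negligible quotient $\mathcal{D}^{\aleph}$, acts on the generic simples by a nontrivial permutation: $A\otimes V(\lambda^k_{\alpha+i})\simeq \bar{\mathbb{C}}\otimes V(\lambda^{\ell-2-k}_{\alpha+i+k+1})$. At the same time $S'(A,W)$ is a fixed scalar $q^{-2\bar\alpha\ell}$ independent of the simple $W$ in the block. The multiplicativity $S'(A\otimes V,W)=S'(A,W)S'(V,W)$ then forces the rows of $S'$ indexed by $V(\lambda^k_{\alpha+i})$ and $V(\lambda^{\ell-2-k}_{\alpha+i+k+1})$ to be proportional, so $S_g$ is degenerate. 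In other words, the translation group $\Zt$ chosen in \cite{AGP} is too small: there is an extra ``hidden'' invertible object $A$ whose orbit identifies simples that the current $\Zt$ keeps distinct, and this redundancy is exactly what collapses the $S$-matrix. The paper closes by asking whether enlarging $\Zt$ to include $A$ (or a square root of $\sigma$) might repair the conjecture.
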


A careful analysis of the matrix $S_g$ for $\mathcal{D}^{\aleph}$ in Section \ref{SS:U} combined with Theorem \ref{degenmodintro} leads us to the following result ( Corollary \ref{rankintro} below)
\begin{corollary}
Conjecture \ref{introconj} is false.
\end{corollary}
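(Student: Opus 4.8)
The plan is to exhibit an explicit generic index $g\in\Gr\setminus\XX$ for the category $\mathcal{D}^{\aleph}$ at which the modified $S$-matrix $S_g$ fails to be invertible, while simultaneously arguing that $\mathcal{D}^{\aleph}$ is unimodular (so that the contrapositive of Theorem \ref{degenmodintro}(1) does not directly close the gap and one really must compute a rank). By Theorem \ref{degenmodintro}(1), if $\mathcal{D}^{\aleph}$ were relative modular then $S_g$ would be non-degenerate for \emph{some} generic index $g$; so it suffices to show that $S_g$ is degenerate for \emph{every} generic $g$. Concretely, I would first recall from Subsection \ref{SS:slmn} the description of the simple generic modules of $\mathcal{D}^{\aleph}$, the grading group $\Gr$, the small symmetric set $\XX$, and the values of the modified dimensions $\qd(V_j)$; then write down a closed formula for the entries $S_{ij}=\qd(V_j)S'_{ij}$ in terms of the highest weights, using the standard expression for the partial trace of the double braiding on weight modules over the unrolled quantum (super)group.

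Next I would analyze the rank of the resulting matrix. The key structural point to extract is that for $\slmn$ with $m,n\ge 1$ the relevant character lattice / weight data forces $S_g$ to have a nontrivial kernel: either two distinct simple generic objects in the chosen orbit representatives have proportional rows (because the braiding pairing factors through a quotient lattice of strictly smaller rank than the number of $\Zt$-orbits $N$), or there is an explicit linear relation among the rows coming from a symmetry of the weights. In practice I expect this to reduce to showing that a certain pairing matrix of the form $\big(\zeta^{\langle \mu_i,\mu_j\rangle}\big)$ (times nonzero diagonal dimension factors) has rank strictly less than $N$, which is a finite linear-algebra computation once the lattice and the bilinear form are pinned down. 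I would then invoke Theorem \ref{degenmodintro}(1): non-invertibility of $S_g$ at all generic $g$ contradicts relative modularity, so Conjecture \ref{introconj} is false, giving Corollary \ref{rankintro}.

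The main obstacle I anticipate is purely bookkeeping-heavy rather than conceptual: correctly transporting the structure from $\mathcal{D}^{\wp}$ through the subcategory-and-negligible-quotient construction to $\mathcal{D}^{\aleph}$, so that the set $I_g$ indexing $S_g$ is the right one (representatives of simple $\Zt$-orbits, not all simples), and then evaluating the modified $S$-matrix entries with the correct normalizations for modified dimensions and the correct root of unity $\xi$. A subtle auxiliary point is to verify that the index $g$ one picks is genuinely generic, i.e.\ lies in $\Gr\setminus\XX$, and that the degeneracy one finds is not an artifact of a bad (non-generic) choice; handling this cleanly may require showing the rank deficiency is uniform in $g$ over a Zariski-dense subset of $\Gr$, which is why computing the matrix entries as explicit exponential/trigonometric functions of $g$ is worthwhile. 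Once the rank count is in hand, the deduction from Theorem \ref{degenmodintro} is immediate.

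Finally, I would phrase the conclusion so that it dovetails with the surrounding discussion in Section \ref{SS:U}: state precisely the rank of $S_g$ that the computation yields (it will be some function of $m$ and $n$ strictly less than $N$), note that this already obstructs relative modularity via Theorem \ref{degenmodintro}(1), and then record Corollary \ref{rankintro}. If time permits I would also remark which hypothesis in a naive ``$S_g$ invertible'' expectation fails — namely that the braiding form on the perturbative weight lattice is degenerate — since this is exactly the refinement the introduction promises to speculate about.
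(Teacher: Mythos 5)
Your overall strategy is sound and matches the paper's: show that $S_g$ is degenerate at a generic index $g$ and then invoke part (1) of Theorem~\ref{degenmodintro} to conclude that relative modularity fails. However, your proposal stops at the level of a plan and leaves the crucial step --- actually producing the degeneracy --- as a hoped-for ``finite linear-algebra computation'' on a pairing matrix of the form $\big(\zeta^{\langle\mu_i,\mu_j\rangle}\big)$. The paper does \emph{not} compute the entries of $S_g$ and then row-reduce; that direct approach would be substantially messier, since the entries involve a product of Weyl-character-type factors for the typical modules $V(\lambda^k_{\alpha+i})$ rather than a single lattice pairing exponential. Instead, the paper introduces the specific module $A=A_{\ell-1}$ (a deformation of the super-symmetric power $S^{\ell-1}(\mathbb{C}^{(2|1)})$) and proves two facts: first, in $\mathcal{D}^{\aleph}$ tensoring with $A$ sends $V(\lambda^k_{\alpha+i})$ to $\bar{\mathbb{C}}\otimes V(\lambda^{\ell-2-k}_{\alpha+i+k+1})$ (an involution on the indexing set $I_g$), and second, $S'(A,W)=q^{-2\bar\alpha\ell}$ is a \emph{constant} independent of the simple $W\in\mathcal{D}^{\aleph}_{\bar\alpha}$. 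Multiplicativity of $S'$ under tensor product then immediately yields that the row of $S_g$ indexed by $V(\lambda^k_{\alpha+i})$ is proportional to the row indexed by $V(\lambda^{\ell-2-k}_{\alpha+i+k+1})$, giving $\operatorname{rank}(S_g)\le\frac{\ell(\ell-1)}{2}<N=\ell(\ell-1)$. This is the idea your proposal is missing: the degeneracy does not come from the braiding pairing factoring through a smaller lattice, but from the existence of a non-$\Zt$-trivial object $A$ whose double-braiding scalar with every generic simple is uniform in degree. Indeed, this is precisely why the paper closes with the question of whether one should enlarge the translation group by $A$ (or a square root of $\sigma$), which your proposal's lattice-quotient explanation would not suggest.

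Two smaller points. Your concern about checking unimodularity is unnecessary here: part (1) of Theorem~\ref{degenmodintro} has no unimodularity hypothesis, so you only need the degeneracy of $S_g$. And your worry about whether the chosen $g$ is ``genuinely generic'' is moot because the proportionality relation above holds uniformly for all $\bar\alpha\in(\mathbb{C}/\mathbb{Z})\setminus\XX$; there is no need to pass to a Zariski-dense subset. Finally, since $S_g$ and $S'_g$ differ by the invertible diagonal matrix of modified dimensions (Lemma~\ref{L:Snon-degS'}), the modified dimension normalizations you were worried about bookkeeping are irrelevant to the rank.
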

We speculate on possible refinements of this conjecture in Section \ref{SS:U}. In Section \ref{gens}, we derive results categorifying the arguments given in \cite[Section 5]{AGP} to prove that $\mathcal{D}^{\aleph}$ is generically semi-simple. We expect that these results will find applications in proving generic semi-simplicity of categories constructed using techniques similar to those used for $\mathcal{D}^{\aleph}$. We have the following result (see Definitions \ref{D:WS} and \ref{D:CS}) which appears as Theorem \ref{sd} below:

\begin{theorem}
Let $\mcC$ be a linear ribbon category and $S \subset \mathrm{Ob}(\mcC)$. Then,
\begin{enumerate}
\item[1)] If all objects in $S$ and all tensor products of objects in $S$ have strong decomposition, then all objects in $\cat^S$ have strong decomposition.
\item[2)] Suppose $\mcC$ is graded by a group $G$ with small symmetric subset $\XX$. If all generic objects in $S$ and all generic tensor products of objects in $S$ have strong decomposition, then all generic objects in $\mcC^S$ have strong decomposition.
\end{enumerate}
\end{theorem}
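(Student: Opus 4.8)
The plan is to reduce everything to two closure properties: that the class of objects of $\mcC$ admitting a strong decomposition is stable under finite direct sums and under retracts. Granting these, part (1) is immediate, since by construction every object of $\cat^S$ is a retract of a finite direct sum of tensor products of objects of $S$; each such tensor product has a strong decomposition by hypothesis, hence so does the direct sum, hence so does the retract. For part (2) one runs the same argument after fixing a generic degree $g\in\Gr\setminus\XX$: a generic object $X\in\cat^S$ is homogeneous of some such degree $g$, hence a retract of the degree-$g$ component of a finite direct sum of tensor products of objects of $S$, and each nonzero homogeneous degree-$g$ summand occurring there is a generic tensor product of objects of $S$; since moreover both ``direct sum'' and ``retract'' preserve the homogeneous degree, the hypotheses of (2) feed the same two closure steps and give the conclusion for $X$.

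First I would dispatch closure under finite direct sums, which is routine. Given strong decompositions $X\cong X_{s}\oplus X_{n}$ and $X'\cong X'_{s}\oplus X'_{n}$ --- where, roughly, the $s$-parts are finite direct sums of simple objects of nonzero modified dimension and the $n$-parts are negligible and contain none of those simples --- the evident decomposition $X\oplus X'\cong(X_{s}\oplus X'_{s})\oplus(X_{n}\oplus X'_{n})$ works: the $s$-part has the required form, the $n$-part is negligible because negligible morphisms form a two-sided ideal (so $\mathrm{id}_{X_{n}}\oplus\mathrm{id}_{X'_{n}}$ is negligible), and the remaining defining conditions are additive in each summand.

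The heart of the proof is closure under retracts. Let $X\cong X_{s}\oplus X_{n}$ be a strong decomposition and $Y$ a retract of $X$. I would pass to the quotient $\wb{\mcC}$ by negligible morphisms, where $\wb{X}\cong\wb{X_{s}}$ is a finite direct sum of the simples $V_{i}$ of nonzero modified dimension. These $V_i$ stay simple and pairwise non-isomorphic in $\wb{\mcC}$ because a simple $V$ with $\qd(V)\neq 0$ has no nonzero negligible endomorphism: the modified trace is a functional $\End(V)\to\kk$ with $\mathrm{id}_{V}\mapsto\qd(V)\neq 0$, and the negligible morphisms form a proper ideal of the division ring $\End(V)$, hence the zero ideal. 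Thus $\wb{X}$ is a semisimple object of $\wb{\mcC}$, so its retract $\wb{Y}$ is one as well, say $\wb{Y}\cong\bigoplus_{i}\wb{V_{i}}^{\,m_{i}}$ with $m_{i}$ the full multiplicity. Lift the canonical inclusions/projections $\wb{V_{i}}\rightleftarrows\wb{Y}$ to morphisms $\iota_{i,a},\pi_{i,a}$ in $\mcC$; since $\pi_{i,a}\iota_{j,b}$ reduces to $\delta_{ij}\delta_{ab}\,\mathrm{id}$ and its error lies in a space of morphisms among the $V_{i}$'s on which every negligible morphism vanishes (by the remark above), in fact $\pi_{i,a}\iota_{j,b}=\delta_{ij}\delta_{ab}\,\mathrm{id}$ already in $\mcC$. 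Hence $e=\sum_{i,a}\iota_{i,a}\pi_{i,a}\in\End(Y)$ is idempotent; splitting it gives $Y\cong\bigl(\bigoplus_{i}V_{i}^{\,m_{i}}\bigr)\oplus Y_{n}$ with $\wb{Y_{n}}\cong 0$, i.e. $\mathrm{id}_{Y_{n}}$ negligible, and with the $m_i$ maximal the summand $Y_{n}$ carries no further copy of a non-negligible simple --- exactly the strong-decomposition conditions for $Y$.

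The main obstacle is the retract step, specifically two bookkeeping points: that the chosen splittings of $\wb{Y}$ lift to honest mutually orthogonal splittings in $\mcC$ (this is where one genuinely uses both that negligible morphisms form a two-sided $\otimes$-ideal and that simples of nonzero modified dimension admit no nonzero negligible endomorphism), and that the residual object $Y_{n}$ verifies the full defining conditions of the negligible part of a strong decomposition --- a short but definition-dependent check. If $\mcC$ is not assumed Krull--Schmidt, one must also keep every direct sum in sight finite and use the splitting of idempotents, which is in any case built into the construction of $\cat^S$.
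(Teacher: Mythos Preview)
Your overall strategy coincides with the paper's: both reduce to the closure of the class of objects with strong decomposition under finite direct sums and under retracts (the paper isolates this as a separate lemma immediately preceding the theorem), together with the structural observation that every object of $\cat^S$ is a retract of a finite direct sum of tensor products of elements of $S$. The paper phrases the latter as an induction along a filtration $\mcC_0=S\subset\mcC_1\subset\cdots$, while you state it directly; the content is the same.

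Your argument for the retract step, however, differs from the paper's and contains a gap. You claim that the lifted maps satisfy $\pi_{i,a}\iota_{j,b}=\delta_{ij}\delta_{ab}\,\mathrm{id}$ already in $\mcC$ because negligible morphisms ``among the $V_i$'s'' vanish. For $i=j$ this is fine: a negligible element of $\End(V_i)=\kk\,\mathrm{id}$ must be zero since $\qd(V_i)\neq0$. But for $i\neq j$ the composite $\pi_{i,a}\iota_{j,b}$ lies in $\Hom(V_j,V_i)$, and nothing in the hypotheses forces a negligible morphism there to vanish. Indeed, with ``simple'' meaning only $\End=\kk\,\mathrm{id}$, two non-isomorphic simples can have nonzero morphisms between them; if $\Hom(V_i,V_j)=0$ then \emph{every} morphism $V_j\to V_i$ is vacuously negligible yet need not be zero. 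Consequently your $e=\sum_{i,a}\iota_{i,a}\pi_{i,a}$ need not be idempotent. The repair is to peel off one isomorphism class at a time: for fixed $i$ the relations $\pi_{i,a}\iota_{i,b}=\delta_{ab}\,\mathrm{id}$ \emph{do} hold on the nose, so $e_i=\sum_a\iota_{i,a}\pi_{i,a}$ is an honest idempotent splitting off $V_i^{\,m_i}$; then iterate inside the complement, whose image in $\wb{\mcC}$ is $\bigoplus_{j\neq i}\wb{V_j}^{\,m_j}$. (For comparison, the paper's argument for this closure step instead decomposes the retract into indecomposables and asserts each one lands in either the semisimple or the negligible summand --- a step that itself leans on a Krull--Schmidt type assumption.)
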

The quotient of any category satisfying $(1)$ of this theorem by its ideal of negligible morphisms will be semisimple and any category satisfying $(2)$ will be generically semisimple. Examples of such categories often admit a distinguished object $\mathsf{v} \in \mathcal{C}$ which makes checking the conditions of this theorem easier (Corollaries \ref{Cor1} and \ref{Cor2} below):

\begin{corollary}\label{introCor1}
Let $\mcC$ be a linear ribbon category and let $S^{\mathsf{v}}:=S \cup \{\mathsf{v}\}$ where $S \subset \mathrm{Ob}(\cat)$ consists of objects with strong decomposition and $\mathsf{v} \in \mathrm{Ob}(\cat)$ is a distinguished object with strong decomposition such that:
\begin{enumerate}
\item If $V \in S$ then $V \otimes \mathsf{v}^n$ has strong decomposition for all $n \in \mathbb{Z}_{\geq 0}$.
\item For any $U_1,U_2 \in S$, $U_1 \otimes U_2 \cong \bigoplus \limits_k V_k$ where $V_k$ is a retract of $\tilde{V}_k \otimes \mathsf{v}^{n_k}$ for some $\tilde{V}_k \in S$ and $n_k \in \mathbb{Z}_{\geq 0}$.
\end{enumerate}
Then, every object in $\cat^{S^{\mathsf{v}}}$ has strong decomposition.\\
\end{corollary}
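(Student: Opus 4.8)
The plan is to deduce the corollary from Theorem~\ref{sd}(1) applied with the generating set $S^{\mathsf{v}}$: once one knows that every object of $S^{\mathsf{v}}$, and every tensor product of objects of $S^{\mathsf{v}}$, has strong decomposition, Theorem~\ref{sd}(1) gives strong decomposition for all of $\cat^{S^{\mathsf{v}}}$. The objects of $S^{\mathsf{v}}$ have strong decomposition by hypothesis, so the whole problem reduces to the tensor products. Throughout I would use the two elementary facts, already needed in the proof of Theorem~\ref{sd}, that strong decomposition is stable under passing to retracts and under finite direct sums.

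First I would put the tensor products in a normal form. Since $\mcC$ is ribbon, hence braided, any tensor product of objects of $S^{\mathsf{v}}$ is isomorphic to one of the form $U_1\otimes\cdots\otimes U_r\otimes\mathsf{v}^{n}$ with $U_1,\dots,U_r\in S$ and $r,n\ge 0$, obtained by commuting all copies of $\mathsf{v}$ to the right. I would then induct on $r$, the number of factors from $S$. For $r=1$ the object is $U_1\otimes\mathsf{v}^{n}$, which has strong decomposition by condition~(1). For $r=0$ it is the pure power $\mathsf{v}^{n}$; for $n=0$ this is the unit object, and the positive powers have strong decomposition by the standing assumptions on the distinguished object $\mathsf{v}$ (equivalently by condition~(1) if one allows $\unit\in S$, and in the intended applications these powers are simple in any case).

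For the inductive step, let $r\ge 2$ and assume the claim for normal forms with fewer than $r$ factors from $S$. Applying condition~(2) to the rightmost pair $U_{r-1},U_r$, write $U_{r-1}\otimes U_r\cong\bigoplus_k V_k$ with each $V_k$ a retract of $\tilde V_k\otimes\mathsf{v}^{n_k}$ for some $\tilde V_k\in S$. Distributing the remaining factors over the direct sum,
\[
U_1\otimes\cdots\otimes U_r\otimes\mathsf{v}^{n}\;\cong\;\bigoplus_k\bigl(U_1\otimes\cdots\otimes U_{r-2}\otimes V_k\otimes\mathsf{v}^{n}\bigr),
\]
and, since tensoring a retract on either side again gives a retract, the $k$-th summand is a retract of $U_1\otimes\cdots\otimes U_{r-2}\otimes\tilde V_k\otimes\mathsf{v}^{n+n_k}$. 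After commuting the copies of $\mathsf{v}$ back to the right, this is a normal form with the $r-1$ factors $U_1,\dots,U_{r-2},\tilde V_k$ from $S$, so it has strong decomposition by the induction hypothesis; hence so does each summand, and hence so does $U_1\otimes\cdots\otimes U_r\otimes\mathsf{v}^{n}$. This closes the induction and verifies the hypotheses of Theorem~\ref{sd}(1) for $S^{\mathsf{v}}$.

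Once Theorem~\ref{sd} is in hand the argument is essentially bookkeeping; the point that needs care is the structure of the induction itself — that invoking condition~(2) on a pair of $S$-factors and reabsorbing the newly produced powers of $\mathsf{v}$ strictly lowers the number of $S$-factors, so the induction on $r$ is well founded, and that the base cases $r\le 1$, in particular the pure powers $\mathsf{v}^{n}$, really are covered by condition~(1) and the assumptions on $\mathsf{v}$.
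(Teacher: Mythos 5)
Your argument is correct and follows the paper's approach exactly: put every tensor product of $S^{\mathsf{v}}$-objects into normal form with all powers of $\mathsf{v}$ commuted to the right, induct on the number $r$ of factors from $S$ using condition~(2) to strictly lower $r$ and condition~(1) as the base case, and then invoke Lemma~\ref{ret} and Theorem~\ref{sd}(1). The paper's own proof is a one-sentence sketch (``an easy induction to show that any tensor product of objects in $S^{\mathsf{v}}$ is a direct sum of retracts of objects of the form $V\otimes\mathsf{v}^{n}$ for some $V\in S$'') that matches your argument step for step. Your flag on the $r=0$ case is a fair observation rather than a defect in your writeup: pure powers $\mathsf{v}^{n}$ with $n\ge 2$ are tensor products of $S^{\mathsf{v}}$-objects, yet conditions~(1) and (2) only ever produce objects with at least one $S$-factor, so they are not reached by the induction unless one tacitly allows $\unit\in S$; the paper's sketch glosses over exactly the same point, and in the intended application it is the generic version (Corollary~\ref{Cor2}) that is used, where this degree-zero case drops out.
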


\begin{corollary}\label{introCor2}
Let $\mcC$ be a linear ribbon category graded by a group $\Gr$ with a small symmetric subset $\XX$.  As above, let $S^{\mathsf{v}}:=S \cup \{\mathsf{v}\}$ where $S \subset \mathrm{Ob}(\cat)$ consists of objects with strong decomposition and $\mathsf{v} \in \mathrm{Ob}(\cat)$ is a distinguished object with strong decomposition.  Suppose  that the following three conditions hold:
\begin{enumerate}
\item Every generic object in $S^{\mathsf{v}}$ has strong decomposition.
\item Given any $V \in S$ and $n \in \mathbb{Z}_{\geq 0 }$, if $V \otimes \mathsf{v}^n$ is generic, then it has strong decomposition.
\item For $U_1 ,U_2 \in S$, if $U_1 \otimes U_2$ is generic, then $U_1 \otimes U_2 \cong \bigoplus\limits_k V_k$ where $V_k$ is a retract of $\tilde{V}_k \otimes \mathsf{v}^{n_k}$ for some $\tilde{V}_k \in S$ and $n_k \in \mathbb{Z}_{\geq 0}$.
\end{enumerate}
Then, every generic object in $\mathcal{C}^{S^{\mathsf{v}}}$ has strong decomposition.\\
\end{corollary}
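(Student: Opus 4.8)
The plan is to deduce the corollary from part~(2) of Theorem~\ref{sd} applied to the generating set $S^{\mathsf v}$. That theorem asks for two inputs: that every generic object of $S^{\mathsf v}$ has strong decomposition, and that every generic tensor product of objects of $S^{\mathsf v}$ has strong decomposition. The first input is exactly hypothesis~(1), so the whole content of the argument is the verification of the second input, after which the assertion about $\mcC^{S^{\mathsf v}}$ is immediate.

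To verify the second input, I would start from an arbitrary tensor product $W_1\otimes\cdots\otimes W_r$ with all $W_i\in S^{\mathsf v}$. The braiding of the ribbon category $\mcC$ gives an isomorphism of this object with one in the normal form $U_1\otimes\cdots\otimes U_m\otimes\mathsf v^{\otimes n}$, where each $U_i\in S$, obtained by commuting all $\mathsf v$-factors to the right; since strong decomposition is an isomorphism invariant it is enough to treat objects of this shape, which I would do by induction on the number $m$ of factors taken from $S$. For $m=1$ the object is $U_1\otimes\mathsf v^{\otimes n}$, so if it is generic it has strong decomposition by hypothesis~(2); the case $m=0$ concerns only the powers $\mathsf v^{\otimes n}$, a minor point settled by the hypothesis that $\mathsf v$ is a distinguished object with strong decomposition (with $\mathsf v^{\otimes 0}=\unit$, and with these powers typically lying outside the generic locus anyway).

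For the inductive step, suppose $m\ge 2$ and that $U_1\otimes\cdots\otimes U_m\otimes\mathsf v^{\otimes n}$ is generic. Reordering the $S$-factors via the braiding, I would arrange that the first two factors $U_1,U_2$ have generic tensor product and then apply hypothesis~(3) to write $U_1\otimes U_2\cong\bigoplus_k V_k$ with each $V_k$ a retract of some $\tilde V_k\otimes\mathsf v^{\otimes n_k}$, $\tilde V_k\in S$. Distributing the tensor product over the direct sum, the object in question becomes $\bigoplus_k\bigl(V_k\otimes U_3\otimes\cdots\otimes U_m\otimes\mathsf v^{\otimes n}\bigr)$, and each summand is a retract of $\tilde V_k\otimes U_3\otimes\cdots\otimes U_m\otimes\mathsf v^{\otimes(n+n_k)}$ after another braiding rearrangement. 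This last object is a generic tensor product of objects of $S^{\mathsf v}$ with only $m-1$ factors from $S$, hence has strong decomposition by the inductive hypothesis; since strong decomposition is inherited by retracts and by finite direct sums, the original object has it as well. This would complete the induction and thereby the verification of the hypotheses of part~(2) of Theorem~\ref{sd}.

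The step I expect to demand the most care is the genericity bookkeeping in the inductive step. Hypothesis~(3) applies only to a pair $U_i\otimes U_j$ that is itself generic, and one must also know that the reduced object $\tilde V_k\otimes U_3\otimes\cdots\otimes\mathsf v^{\otimes(n+n_k)}$ is generic so that the inductive hypothesis is applicable. I expect both points to follow from the defining property of a small symmetric subset $\XX\subset\Gr$, which supplies enough degrees avoiding $\XX$, together with the fact that in a $\Gr$-graded category a retract of a homogeneous object is homogeneous of the same degree; the latter forces $V_k$, and hence $\tilde V_k\otimes\mathsf v^{\otimes n_k}$, to occupy the single graded component determined by $U_1\otimes U_2$, so that the degree of the reduced object coincides with that of the original and is therefore generic. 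Confirming that a generic pair can always be selected --- or, failing that, restructuring the induction to run over graded components rather than over objects --- is the only genuinely non-routine point; everything else is routine manipulation with the braiding and the stability properties of strong decomposition.
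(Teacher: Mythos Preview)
Your approach is essentially the same as the paper's: reduce to Theorem~\ref{sd}(2) by an induction on the number of $S$-factors showing that every generic tensor product of objects of $S^{\mathsf v}$ is a direct sum of retracts of objects of the form $V\otimes\mathsf v^{n}$ with $V\in S$, then invoke hypothesis~(2) and Lemma~\ref{ret}. The paper's own proof is the single sentence ``An identical argument adding generic conditions where necessary'' (referring to the proof of Corollary~\ref{Cor1}), and the genericity bookkeeping you single out as the one non-routine point is precisely what that phrase is meant to cover.
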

Corollary \ref{introCor2} applies in particular to the construction of $\mathcal{D}^{\aleph}$ and provides a categorification of the arguments used to prove generic semisimplicity of $\mathcal{D}^{\aleph}$ in \cite{AGP}.

\section{Preliminaries}\label{S:Preliminaries}
\subsection{Relative (pre-)modular category}\label{SS:RelModDef}
In this section we recall briefly the main theoretical notion used to
define TQFTs as in \cite{CGP14} and \cite{D17}. Let $\Gr$ be a commutative group.  The notion of a relative
$\Gr$-modular category appeared in \cite{CGP14}, in order to build
invariants of decorated 3-manifolds (where the decoration includes a
$\Gr$-valued $1$-cohomology class).  We should call these categories
non-degenerate relative pre-modular as De Renzi gave in \cite{D17} an
additional ``modularity condition'' that ensures the existence of an
underlying $1+1+1$-TQFT.

The construction is based on the notion of modified trace (or m-trace
for short) which replaces the usual categorical trace in modular
categories.  Let $\mcC$ be a linear ribbon category over a field
$\kk$ (a ribbon category with unit $\unit$ where the hom-sets are
$\kk$ vector spaces, the composition and tensor product of morphisms
are $\kk$-bilinear, and the canonical $\kk$-algebra map
$\kk \to \End_\mcC(\unit), k \mapsto k \, \Id_\unit$ is an
isomorphism).  An object $V\in\mcC$ is simple if
$\End_\mcC(V)=\kk\Id_V$.  It is regular if its evaluation is an
epimorphism.
\begin{definition}\label{mtrace}[\cite{GKP1}]\ \\
\noindent a) An {\em ideal} $\ideal$ in 
$\mcC$ is a full subcategory which satisfies:
\begin{enumerate}
\item Stable under retracts: if $W$ in $\ideal$, $V$ is an object of $\mcC$ and there exist $\alpha:V\to W$ and $\beta:W\to V$ morphisms such that $\beta\alpha=\Id_{V}$ then $V\in \ideal$.

\item Absorbs tensor products: if $U\in\ideal$ then for all
  $V \in \mcC$, $U\otimes V\in\ideal$.
\end{enumerate}
b) An \emph{m-trace} on an ideal $\ideal$ is a family of linear functions 
$\{\mt_V\}_{V\in\ideal}$ where $\mt_V:\End_\mcC(V)\to\kk$ satisfies:
\begin{enumerate}
 \item Cyclicity property: $\mt_V(fg)=\mt_U(gf)$, 
\item  Partial trace
property: 
$\mt_{U\otimes V}(f)=\mt_U(\ptr_V(f))$, for all $f\in\End(U\otimes V)$
\end{enumerate}
where $\ptr_V(f)$ is the right partial trace of $f$ obtained by using the duality morphism to close $f$ on the right. \\

c) Given an m-trace $\mt$ on $\ideal$, the {\em modified dimension} of $V\in \ideal$ is defined to be $\qd(V)=\mt_V(\Id_V)$.
\end{definition}
The notion of right and left ideals exist for pivotal
categories but they coincide in a ribbon category.

\begin{definition}
  
  Let $\catb$ be a 
  linear monoidal category. 

$1)$ A set of objects $ \mathcal D=\{ V_i \mid i \in J \} $ of $\catb$ is called a \emph{dominating set} if for any object $V \in \catb$ there exist indices $\{i_1,..,i_m \} \subseteq J $ and morphisms 
$\iota_k \in \Hom_{\catb}(V_{i_k},V), s_k \in \Hom_{\catb}(V,V_{i_k}),$ for all $ k\in \{1,...,m\}$ such that:
$$\Id_{V}=\sum_{k=1}^m \iota_k \circ s_k.$$ 

$2)$ A dominating set $\mathcal D$ is \emph{completely reduced} if:
$$\dim_\kk(\Hom_\catb(V_i,V_j))=\delta_{i,j}, \text{ for all } i,j \in J.$$
\end{definition}

\begin{definition}[Free realisation] \label{free}Let $\Zt$ be a commutative group with additive notation.  A \emph{free realisation} of $\Zt$ in $\mcC$ is a set of objects $\{\sigma(k)\}_{k\in \Zt}$ such that 
\begin{enumerate}
  \item $\sigma(0)=\unit$,
  \item the quantum dimension of $\sigma(k)$ is $\pm 1$ for all $k\in \Zt$,
  \item $\sigma(j)\otimes \sigma(k)= \sigma(j+k)$ for all $j,k\in \Zt$,
  \item $ \theta_{\sigma(k)}=\Id_{\sigma(k)}, \text{ for all } k \in \Zt $
where $\{\theta_V:V\to V\}_{V\in \mcC}$ is the twist in $\mcC$,
\item for any simple object $V$ in $\mcC$ we have $V\otimes \sigma(k) \cong V$ if and only if $k=0$.  
\end{enumerate}
\end{definition}

\begin{remark}
The above definition was first given in \cite{CGP14} with the condition that the quantum dimensions are all $1$.  This condition was relaxed in \cite{D17}.  Our definition of a free realisation is equivalent to the one given in \cite{D17}.  
\end{remark}

\begin{definition}[A $\Gr$-grading on a category] \label{D:Gstr}
Let $\Gr$ be a commutative group with additive notation.   A \textit{$\Gr$-grading} on $\mcC$ is an equivalence of linear categories
$\mcC \cong \bigoplus_{g \in \Gr} \mcC_g$ where 
$\{ \mcC_g \mid g \in \Gr \}$ is a  family of full subcategories of $\mcC$
satisfying the following conditions:
  \begin{enumerate}
  \item $\unit \in \mcC_0$,
  \item  if $V\in\mcC_g$,  then  $V^{*}\in\mcC_{-g}$,
  \item  if $V\in\mcC_g$, $V'\in\mcC_{g'}$ then $V\otimes
    V'\in\mcC_{g+g'}$,
  \item  if $V\in\mcC_g$, $V'\in\mcC_{g'}$ and $\Hom_\mcC(V,V')\neq 0$, then
    $g=g'$.  
    \end{enumerate}
\end{definition}

\begin{definition}\label{smallsymm}
Let $\Gr$ be a commutative group and a subset $\XX \subset \Gr$. We define the following two notions:
\begin{enumerate}
\item $\XX$ is \emph{symmetric} if $\XX=-\XX$.
\item $\XX$ is \emph{small} in $\Gr$ if for all $g_1,\ldots ,g_n\in \Gr$ we have:  
  $$ \bigcup_{i=1}^n (g_i+\XX) \neq \Gr.$$
\end{enumerate}

\end{definition}
 \begin{definition}(Relative pre-modular category)\label{D:1}
 Let $\Gr$ and $\Zt$ be commutative groups and $\XX$ be a small symmetric subset of $ \Gr$. Let $\mcC$ be a linear ribbon category over a field $\kk$ with the following  data:
 \begin{enumerate}
 \item A $\Gr$-grading on $\mcC$,
 \item A free realisation $\{\sigma(k)\}_{k\in \Zt}$ of $\Zt$ in $\mcC_0$,
 \item A non-zero m-trace $\mt$ on the ideal of projective objects of
   $\mcC$.
 \end{enumerate}
 A category $\mcC$ with this data is called a \emph{pre-modular $\Gr
$-category relative to $(\Zt,\XX)$} if the following properties are satisfied:
 \begin{enumerate}
\item \emph{Generic semisimplicity}: \label{D:5} for every $g \in \Gr \setminus \XX$, there exists a finite set of regular simple objects:
$$\Theta(g):=\{ V_i \mid i \in I_g  \}$$
such the following set is a completely reduced dominating set for $\mcC_g$:
$$\Theta(g) \otimes \sigma(\Zt):=\{ V_i \otimes \sigma(k) \mid i \in I_g, k\in \Zt  \}.$$

\item \emph{Compatibility}: \label{D:6}There exist a bilinear map $\psi: \Gr \times \Zt \rightarrow \kk^*$ such that:
  \begin{equation}
    \label{eq:psi}
    c_{\sigma(k),V}\circ c_{V,\sigma(k)}= \psi(g,k) \cdot  \Id_{V \otimes \sigma(k)}.
  \end{equation}
for any $g\in \Gr$, $V \in \mcC_g$ and $k \in \Zt$, where $c_{-,-}$ denotes the braiding on $\cat$.
\end{enumerate}
Throughout, we will denote the left and right duality morphisms on $\cat$ respectively by
\begin{equation*}
  \overrightarrow{\mathrm{ev}}_V:V^* \otimes V \to \mathds{1},
  \quad \overrightarrow{\mathrm{coev}}_V: \mathds{1} \to V \otimes V^*,
  \quad \overleftarrow{\mathrm{ev}}_V:V \otimes V^* \to \mathds{1},
  \quad \overleftarrow{\mathrm{coev}}_V:\mathds{1} \to V^* \otimes V \end{equation*}
 \end{definition}
In the following, we present the extra requirements that are needed in order to have a relative modular category.
\begin{definition}[Kirby color and Non-degenerate]\label{d:ndeg}
Let $\mcC$ be a pre-modular $\Gr$-category relative to $(\Zt,\XX)$.

\begin{enumerate}
  \item For $g \in \Gr \setminus \XX$, let the \emph{Kirby color of index $g$} be the following element: $$\Omega_g:= \sum_{i \in I_g}\qd(V_i) \cdot V_i.$$  \item For $g \in \Gr \setminus \XX$ and $V\in \mcC_g$ consider the scalars $\Delta_\pm\in \kk$ defined by
$$\epsh{Fig-nondeg}{20ex}
$$
where $ \dot{=}$ means that graphs are \emph{skein equivalent}, i.e.\ equal up to the Reshetikhin-Turaev functor associated with the category $\mcC$, see for example Section 1.2 of \cite{D17}.   Lemma 5.10 of \cite{CGP14} implies these scalars do not depend neither on $ V$ nor on $g$. We say $\mcC$ is \emph{non-degenerate} if $\Delta_{+}\Delta_{-}\neq 0$. 
\end{enumerate}
\end{definition}
We now state the additional condition required for a modular relative category.  
\begin{definition}[Relative modular category] \label{D:2}

We say that a category $\mcC$ is a modular $\Gr
$-category relative to $(\Zt,\XX)$ if:
\begin{enumerate}
\item $\mcC$ is a pre-modular $\Gr$-category relative to $(\Zt,\XX)$,
\item there exists a modularity parameter $\zeta_{\Omega} \in \kk^*$ such that for any $g,h \in \Gr \setminus \XX$ and any $i,j \in I_g$ one has:
  \begin{equation}\label{eq:mod}
    \epsh{relative_modularity}{14ex}\vspace*{4ex}.
  \end{equation}
\end{enumerate}
\end{definition}
As discussed above, non-degenerate pre-modular $\Gr$-categories give
rise to invariants of 3-manifolds with some additional structure.
Futhermore modular $\Gr$-categories give rise to TQFTs.  In \cite{D17}
it is shown that a modular $\Gr$-category is non-degenerate.

Through the article we will assume that if $\cat$ is a $\Gr$-graded
category, for any degree $g\in\Gr$, $\cat_g$ is not the null object
i.e. there exists $V\in\cat_g$ with $\Id_V\neq0$.

\subsection{Transparency}\label{trans}

Given a braided category $\cat$ with braiding $c_{-,-}$, an object
$V \in \cat$ is called transparent if
\[ c_{V,U} \circ c_{U,V} = \mathrm{Id}_{U \otimes V} \] for all
$U \in \cat$.  
\begin{definition}
  The Müger center of $\cat$ is the full subcategory of $\cat$
  consisting of all transparent objects.
\end{definition}

Similarly, given an endomorphism $f \in \mathrm{End}(V)$ for
$V \in \cat$, we call $f:V \to V$ transparent if
\begin{align*}
c_{V,U} \circ (f \otimes \mathrm{Id}_U) \circ c_{U,V}=\mathrm{Id}_U \otimes f\\
c_{W,V} \circ (\mathrm{Id}_W \otimes f) \circ c_{V,W}=f \otimes \mathrm{Id}_W\\
\end{align*}
for all $U,W \in \cat$. Clearly, $V$ is transparent if and only if $\mathrm{Id}_V$ is transparent. We have the following useful Lemma:

\begin{lemma}\label{subqtrans}
  Let $\cat$ be an abelian braided category with biexact tensor product and $V \in \cat$ a transparent object. Then all quotients and subobjects of $V$ are transparent.
\end{lemma}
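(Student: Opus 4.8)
The plan is to use the biexactness of the tensor product to transport the transparency equation for $V$ to its subobjects and quotients. First I would fix a subobject $U \hookrightarrow V$ with inclusion $\iota$, and let $q : V \twoheadrightarrow V/U$ be the canonical projection; by duality (using that in a braided category $U \mapsto U^*$ is contravariant and the braiding is natural) it suffices to treat the quotient case and then apply it to $V^*$, whose subobjects correspond to quotients of $V$ — alternatively one simply runs the two arguments in parallel since they are formally dual. So assume $W := V/U$ with projection $q : V \to W$. For an arbitrary object $X \in \cat$, I want to show $c_{W,X} \circ c_{X,W} = \Id_{X \otimes W}$.

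The key step is naturality of the double braiding: for the epimorphism $q : V \to W$ we have
\[
c_{W,X} \circ c_{X,W} \circ (\Id_X \otimes q) = (\Id_X \otimes q) \circ c_{V,X} \circ c_{X,V} = (\Id_X \otimes q) \circ \Id_{X \otimes V} = \Id_X \otimes q,
\]
where the middle equality is transparency of $V$. Now because the tensor product is right-exact (biexact, in particular) and $q$ is an epimorphism, $\Id_X \otimes q$ is an epimorphism; hence it can be cancelled on the right, giving $c_{W,X} \circ c_{X,W} = \Id_{X \otimes W}$. This is the transparency of $W$. For the subobject $U$ with mono $\iota : U \to V$, the dual computation uses left-exactness of $\otimes$: from $(\Id_X \otimes \iota) \circ c_{U,X} \circ c_{X,U} = c_{V,X} \circ c_{X,V} \circ (\Id_X \otimes \iota) = \Id_X \otimes \iota$, and since $\Id_X \otimes \iota$ is a monomorphism we may cancel it on the left to conclude $c_{U,X} \circ c_{X,U} = \Id_{X \otimes U}$.

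The one point requiring a little care — and the only real obstacle — is the precise form of naturality of the braiding one invokes: one needs $c_{-,X}$ and $c_{X,-}$ to be natural transformations, so that for $h : A \to B$ one has $c_{B,X} \circ (h \otimes \Id_X) = (\Id_X \otimes h) \circ c_{A,X}$ and similarly on the other side; composing the two gives naturality of the double braiding $c_{-,X} \circ c_{X,-}$ with respect to morphisms in the first slot, which is exactly what is used above with $h = q$ or $h = \iota$. With that in hand the argument is the short two-line cancellation sketched above, with biexactness supplying the needed epi/mono-preservation of $\Id_X \otimes (-)$. No further hypotheses on $\cat$ are needed beyond those in the statement.
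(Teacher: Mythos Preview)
Your proposal is correct and follows essentially the same approach as the paper: use naturality of the double braiding to push the transparency identity for $V$ along the mono $\iota$ (resp.\ the epi $q$), then invoke biexactness of $\otimes$ to cancel $\Id_X\otimes\iota$ on the left (resp.\ $\Id_X\otimes q$ on the right). The paper just leads with the subobject case and says the quotient case is ``nearly identical,'' whereas you lead with the quotient; your aside about reducing one case to the other via duals is unnecessary (and would require rigidity, which is not assumed), but you rightly discard it and run the two parallel arguments directly.
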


\begin{proof}
  Let $U \in \cat$ be a subobject of $V$ with embedding
  $\iota: U \hookrightarrow V$. Then, since $V$ is transparent we have
  \[ c_{V,W} \circ c_{W,V}=\mathrm{Id}_{W \otimes V} \] for all
  $W \in \cat$. Multiplying both sides of this equation by
  $\mathrm{Id}_W \otimes \iota$ and applying naturality of the
  braiding, we see that
\begin{align*}
c_{V,W} \circ c_{W,V} \circ (\mathrm{Id}_W \otimes \iota) &= \mathrm{Id}_W \otimes \iota\\
\Longrightarrow \qquad  (\mathrm{Id}_W \otimes \iota) \circ c_{U,W} \circ c_{W,U}&=\mathrm{Id}_W \otimes \iota. 
\end{align*}
Since the tensor product is biexact $\iota:U \to V$ monic implies $\mathrm{Id}_W \otimes \iota$ is monic, so
\[ c_{U,W} \circ c_{W,U}=\mathrm{Id}_{W \otimes U}.\]
Therefore, $U$ is transparent. A nearly identical argument shows that quotients of transparent objects are transparent, so the lemma follows.
\end{proof}

Clearly the Müger center of $\cat$ contains the trivial objects of
$\cat$, i.e. objects which are a direct sum of copies of the unit
object.  We say the Müger center of $\cat$ is trivial if all its
objects are trivial.  Similarly,
we define for a relative pre-modular $\Gr$-category the $\Zt$-orbit of
$V$ to be the class of objects isomorphic to $V\otimes\sigma(k)$ for
some $k\in\Zt$.

\begin{definition}\label{Ztriv}
  Let $\cat$ be a relative pre-modular category $\mcC$ with
  translation group $\Zt$.  We say an object $X\in\cat$ is
  $\Zt$-trivial if it is a direct sum of objects in the $\Zt$-orbit
  of $\unit$.
  We say the Müger center of $\cat_0$ is $\Zt$-trivial if all its
  objects are $\Zt$-trivial.
\end{definition}
Since the braiding of
$\sigma(k)$ is governed by $\psi$ (recall Equation \eqref{eq:psi}), we have:
\begin{lemma}
  Suppose that $\cat_0$ has $\Zt$-trivial Müger center then $\cat$ has
  a trivial Müger center in degree $0$ if and only if the right kernel
  of $\psi$ is trivial in $\Zt$.  If furthermore the left kernel of
  $\psi$ is trivial in $\Gr$ then the Müger center of $\cat$ is
  trivial.
\end{lemma}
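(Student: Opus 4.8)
The plan is to analyze the Müger center of $\cat$ directly in terms of its graded pieces and the transparency data coming from $\psi$. First I would note that by the $\Gr$-grading (Definition \ref{D:Gstr}, condition (4)) and the fact that the braiding respects the grading, an indecomposable transparent object of $\cat$ must live in a single degree; moreover since $\sigma(k)\in\cat_0$, tensoring with $\sigma(k)$ preserves degree, so the $\Zt$-orbit of $\unit$ lies entirely in $\cat_0$. Thus the degree-$0$ part of the Müger center of $\cat$ is exactly the Müger center of $\cat_0$ as computed inside $\cat$ — but one must be careful: transparency \emph{in $\cat$} is a priori stronger than transparency in $\cat_0$, because it requires $c_{V,U}c_{U,V}=\Id$ for all $U\in\cat$, not just $U\in\cat_0$.

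The key computation is then the following. Suppose $V\in\cat_0$ is in the Müger center of $\cat_0$; by hypothesis it is $\Zt$-trivial, so $V\cong\bigoplus_j \sigma(k_j)$ for some $k_j\in\Zt$. For $V$ to be transparent \emph{in all of $\cat$} we need $c_{U,\sigma(k_j)}$ to braid trivially with every $U\in\cat_g$ for every $g\in\Gr$; by Equation \eqref{eq:psi} this double braiding is the scalar $\psi(g,k_j)$. Hence $\sigma(k_j)$ is transparent in $\cat$ if and only if $\psi(g,k_j)=1$ for all $g\in\Gr$, i.e. $k_j$ lies in the right kernel of $\psi$. So the degree-$0$ Müger center of $\cat$ consists precisely of direct sums of $\sigma(k)$ with $k$ in the right kernel of $\psi$; requiring these to all be copies of $\unit$ — i.e. that the degree-$0$ Müger center be trivial — is equivalent to the right kernel of $\psi$ being trivial in $\Zt$ (using condition (5) of Definition \ref{free}, $\sigma(k)\cong\unit$ forces $k=0$). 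This establishes the first ``if and only if''. I would also invoke Lemma \ref{subqtrans} to pass between subobjects/quotients and direct summands so that the $\Zt$-trivial structure interacts correctly with taking transparent subobjects, and to reduce to the simple/indecomposable pieces $\sigma(k)$.

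For the second assertion, assume additionally the left kernel of $\psi$ is trivial in $\Gr$. I need to rule out transparent objects in nonzero degrees. Take an indecomposable transparent $W\in\cat_g$ with $g\neq 0$. Testing transparency against $\sigma(k)$ for $k\in\Zt$ gives, by \eqref{eq:psi}, the constraint $\psi(g,k)=1$ for all $k\in\Zt$ — so $g$ is in the left kernel of $\psi$, forcing $g=0$, a contradiction. Hence the Müger center is concentrated in degree $0$, where by the first part it is trivial; therefore the whole Müger center of $\cat$ is trivial. The main obstacle is the bookkeeping in the first part: carefully justifying that a transparent object of $\cat_0$ under the $\Zt$-triviality hypothesis decomposes (as object, up to iso) into the simple pieces $\sigma(k)$, and that transparency is detected summand-by-summand, so that the kernel-of-$\psi$ condition is genuinely equivalent to triviality rather than merely sufficient; once the object is written as $\bigoplus\sigma(k_j)$ and one uses that each $\sigma(k_j)$ is simple with $\End=\kk\Id$, transparency of the sum is equivalent to transparency of each summand and the argument closes cleanly.
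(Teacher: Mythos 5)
Your argument is correct and follows essentially the same route as the paper's own proof: both hinge on the observation that for $V\in\cat_g$ the double braiding with $\sigma(k)$ is the scalar $\psi(g,k)$ by the compatibility condition, on the $\Zt$-triviality hypothesis to identify degree-$0$ transparent objects with direct sums of $\sigma(k)$'s, and on condition (5) of the free realisation to conclude $\sigma(k)\cong\unit$ forces $k=0$. Your write-up is a bit more explicit than the paper's in reducing to indecomposable summands before invoking transparency of each $\sigma(k_j)$ (the paper passes directly to ``a nontrivial simple transparent object''), but the substance is the same.
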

\begin{proof}
  If the right kernel of $\psi$ is not trivial, then any element
  $\sigma(k),\,k\in\Zt\setminus\{0\}$ such that $\psi(\cdot,k)=1$ is
  transparent but non trivial.  Reciprocally, if the Müger center of
  $\cat$ is non trivial in degree $0$ then there exists a non trivial
  simple transparent object $V$ of degree $0$.  Then $V$ is in the
  Müger center of $\cat_0$ and thus $V$ being $\Zt$-trivial is
  isomorphic to some $\sigma(k)\not\simeq\unit$.  Then for any
  $g\in\Gr$, the existence of a non zero $V\in\cat_g$ and $\sigma(k)$
  being transparent implies that $\psi(g,k)=1$.  So $k$ is in the
  right kernel of $\psi$.
  \\
  Next if the left kernel of $\psi$ is trivial in $\Gr$, then any
  object $V\in\cat_g$ of non trivial degree $g$, has non trivial
  braiding with any $\sigma(k)$ such that $\psi(g,k)\neq1$ thus $V$
  can't be transparent.  Hence the Müger center is concentrated in
  degree $0$ which conclude the proof. 
\end{proof}

\begin{definition}
  We say that the realization $(\sigma_k)_{k\in\Zt}$ has no self
  extension if for every Jordan-Holder series
  $M=M_0\supset M_1\supset\cdots\supset M_n=\{0\}$ in $\cat_0$ with
  $M_i/M_{i+1}$ in the $\Zt$-orbit of $\unit$, $M$ is $\Zt$-trivial
  (thus semisimple).
\end{definition}
This condition holds, for example, for all unrolled quantum groups by an easy argument on weights.

\subsection{$U^H_{\xi}(\mathfrak{sl}(m|n))$}\label{SS:slmn}
Let $A$ be the Cartan matrix of $\mathfrak{sl}(m|n)$ and $q$ an $\ell$-th root of unity. Then,
\begin{definition}
$U_q\mathfrak{sl}(m|m)$ is the $\mathbb{C}$-algebra with generators $E_i,F_i,K_i,K_i^{-1}$ with $i=1,...,m+n-1$ satisfying relations
\begin{align}
\label{A1}K_iK_j&=K_jK_i, &   K_iK_i^{-1}&=K_i^{-1}K_i=1,\\
\label{A2}K_iE_jK_i^{-1}&=q^{d_ia_{ij}}E_j, & K_iF_jK_i^{-1}&=q^{-d_ia_{ij}}F_j,\\
\label{A3}[E_i,F_j]&=\delta_{ij}\frac{K_i-K_i^{-1}}{q^{d_i}-q^{-d_i}} & E_m^2&=F_m^2=0\
\end{align}
and for $X_i=E_i,F_i$
\begin{align}
\label{A4}[X_i,X_j]=0 \quad &\mathrm{if} \; |i-j|>2\\
\label{A5}X_i^2X_j-(q+q^{-1})X_iX_jX_i+X_jX_i^2=0 \quad &\mathrm{if} \; |i-j|=1, \; \mathrm{and} \; i\not=m
\end{align}
\begin{align}
\label{A6}X_mX_{m-1}X_mX_{m+1}+&X_mX_{m+1}X_mX_{m-1}+X_{m-1}X_mX_{m+1}X_m\\
\nonumber &+X_{m+1}X_mX_{m-1}X_m-(q+q^{-1})X_mX_{m-1}X_{m+1}X_m=0
\end{align}
All generators are even except for $E_2,F_2$ which are odd and $[-,-]$ is the super-commutator given by $[x,y]=xy-(-1)^{|x||y|}yx$. $U_q(\mathfrak{sl}(m|n)$ can be given the structure of a Hopf superalgebra with coporoduct $\Delta$, counit $\epsilon$ and antipode $S$ by
\begin{align}
\label{H1}\Delta (E_i)&=E_i \otimes 1+K_i^{-1} \otimes E_i, & \epsilon(E_i)&=0, & S(E_i)&=-K_iE_i,\\
\label{H2} \Delta(F_i)&=F_i \otimes K_i+1 \otimes F_i, & \epsilon(F_i)&=0, & S(F_i)&=-F_iK_i^{-1}\\
\label{H3}\Delta(K_i^{ \pm 1})&=K_i^{\pm 1} \otimes K_i^{\pm 1} & \epsilon(K_i^{\pm 1})&=1 & S(K_i^{\pm 1})&=K_i^{\mp 1}
\end{align}
Then, the unrolled quantum group $U_q^H\mathfrak{sl}(m|n)$ is the $\mathbb{C}$-algebra with generators $H_i,E_i,F_i,K_i^{\pm 1}$ with $i=1,...,m+n-1$ and relations \eqref{A1}-\eqref{A6} plus the relations
\begin{equation}
\label{A7}[H_i,H_j]=[H_i,K_j^{\pm}]=0, \qquad [H_i,E_j=a_{ij}E_j, \qquad [H_i,F_j]=-a_{ij}F_j 
\end{equation}
$U_q^H\mathfrak{sl}(2|1)$ becomes a Hopf algebra with coproduct, counit, and antipode satisfying relations \eqref{H1}-\eqref{H3} and
\begin{equation}\label{H4} \Delta(H_i)=H_i \otimes 1 + 1 \otimes H_i, \qquad \epsilon(H_i)=0, \qquad S(H_i)=-H_i \end{equation}
\end{definition}
We call a $U_q^H\mathfrak{sl}(m|n)$-module $V$ a ``weight" module if
\begin{enumerate}
\item The elements $H_i,i=1,..,m+n-1$ act semisimple on $V$.
\item $K_i=q^{d_iH_i}$ as operators on $V$.
\end{enumerate}

We denote by $\mathcal{D}$ the category of $U_{q}^H\mathfrak{sl}(m|n)$ weight modules. Let $\mathfrak{h}:=\mathrm{Span}\{H_1,...,H_{m+n-1}\}$. Any weight module $V$ admits a basis $\{v_j\}_{j \in I}$ and a family of linear functionals $\{\lambda_j\}_{j \in I} \in \mathfrak{h}^*$ such that 
\[ H_iv_j=\lambda_j(H_i)v)j\]
for all $i$ and $j$. We call $\lambda_j$ the weight of $v_j$ and a weight of $V$ if it a weight for some $v \in V$. We call a weight $\lambda \in \mathfrak{h}^*$ of a module $V$ perturbative if $\lambda(H_i) \in \mathbb{Z}$ for all $i \not = m$ and a $U_{q}^H\mathfrak{sl}(m|n)$-module is called perturbative if all of its weights are. We denote the category of perturbative $U_{q}^H\mathfrak{sl}(m|n)$-modules by $\mathcal{D}^{\wp}$. Now, let $\mathsf{v}$ denote the standard $m+n$-dimensional module in $\mathcal{D}^{\wp}$ and let
$$S:=\{  V^{\lambda_z^0} \otimes \bar{\mathbb{C}}^{\otimes n} \otimes \varepsilon^m\; | \; n\in\{0,1\}, m \in \mathbb{Z}_{\geq 0}, \text{ and } \; z \in \mathbb{C} \text{ with } \bar{z}\in(\mathbb{C}/\mathbb{Z}) \setminus \XX^{\wp} \}$$
where $\XX^{\wp}=\left(\frac{1}{2}\mathbb{Z}\right)\setminus \mathbb{Z}$ and we have the following:
\begin{enumerate}
\item $\bar{\mathbb{C}}$ is the odd $1$-dimensional module with trivial action.
\item $\varepsilon$ is the $1$-dimensional module where $H_m$ acts by $\ell$, all $K_i$ act by $1$, and all other generators act by zero.
\item $V^{\lambda^0_z}$ is the unique simple highest weight module of highest weight $\lambda_z^0$ where
\[ \lambda_z^c= z\omega_m+\sum\limits_{\substack{k=1 \\ k \not = m}}^{r} c_k \omega_k. \]
for $z\in \mathbb{C},c_k \in\Z$, and $\omega_k$ are the fundamental weights.\\
\end{enumerate}
We denote by $\bar{\mathcal{D}}^{\wp}$ the full subcategory of $\mathcal{D}^{\wp}$ generated by $S$ with respect to tensor products, direct sums, and retracts.

\begin{definition}\label{DN}
We define $\mathcal{D}^{\aleph}$ to be the quotient of $\bar{\mathcal{D}}^{\wp}$ by its ideal of negligible morphisms.
\end{definition}
\section{Non-degeneracy}\label{SS:ND}

Let $\mcC$ be a pre-modular $\Gr$-category.  Let $g \in \Gr \setminus \XX$.  Recall the finite set of simple modules $\Theta(g)=\{V_i | i \in I_g\}$ in the definition of generic semisimplicity.  Consider the  \emph{$S'$-matrix} $S'=(S'_{ij})_{i,j\in I_g}$ where 
$S'_{ij}$ is the 
scalar corresponding to the endomorphism which is given by the partial trace of the double
braiding of $V_i$ and $V_j$:
\begin{equation}
  \label{eq:Sprime}
  \epsh{Sprime}{12ex}\put(-8,-10){\ms{V_i}}\put(-40,24){\ms{V_j}}=S'_{ij}\,\Id_{V_j}.
\end{equation}
Let $S_g=(S_{ij})_{i,j\in I_g}$ be the \emph{modified $S$-matrix}
where $S_{ij}=\qd(V_j)S'_{ij}$ is the value of the modified RT-link
invariant evaluated on the Hopf link whose components are colored with
$V_i$ and $V_j$.  This is a symmetric matrix.  

\begin{lemma}\label{L:Snon-degS'}
  For fixed $g \in \Gr \setminus \XX$, the matrix $S_g$ is
  non-degenerate if and only if $S'_g$ is non-degenerate.
\end{lemma}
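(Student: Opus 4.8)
The plan is to realize $S_g$ as $S'_g$ with columns rescaled by an invertible diagonal matrix, so that the statement collapses to the non-vanishing of the modified dimensions $\qd(V_i)$, $i\in I_g$.

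First, the defining relation $S_{ij}=\qd(V_j)S'_{ij}$ reads at the level of matrices as
\[
  S_g \;=\; S'_g\,D_g,\qquad D_g:=\operatorname{diag}\bigl(\qd(V_i)\bigr)_{i\in I_g}.
\]
If $D_g$ is invertible then $\operatorname{rank}S_g=\operatorname{rank}S'_g$, and in particular $S_g$ is non-degenerate if and only if $S'_g$ is. So it suffices to prove $\qd(V_i)\neq0$ for every $i\in I_g$. (This non-vanishing is not merely convenient but necessary: were some $\qd(V_i)$ zero, the corresponding column of $S_g$ would vanish while $S'_g$ could still be invertible, and the asserted equivalence would fail.)

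Thus the whole content of the lemma is that a generic regular simple object has non-zero modified dimension, and I would argue this by contradiction. By generic semisimplicity, since $g\notin\XX$ the graded piece $\mcC_g$ is semisimple with simple objects the $V_i\otimes\sigma(k)$; in particular each $V_i$ is projective in $\mcC_g$, hence projective in $\mcC$ because the grading is a direct sum decomposition, so each $V_i$ lies in the ideal $\ideal=\operatorname{Proj}(\mcC)$ carrying the non-zero m-trace $\mt$ of Definition \ref{D:1}. Suppose $\qd(V_{i_0})=0$ for some $i_0\in I_g$. Since $V_{i_0}$ is simple, $\End_\mcC(V_{i_0})=\kk\,\Id_{V_{i_0}}$, so for any object $X$ and any $f\in\End(V_{i_0}\otimes X)$ the partial trace $\ptr_X(f)$ is a scalar multiple of $\Id_{V_{i_0}}$ and hence $\mt_{V_{i_0}\otimes X}(f)=\mt_{V_{i_0}}(\ptr_X(f))$ is that scalar times $\qd(V_{i_0})=0$; with the cyclicity property this forces $\mt$ to vanish on the whole ideal generated by $V_{i_0}$. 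The remaining point — that this ideal already exhausts $\operatorname{Proj}(\mcC)$, so that $\mt\equiv0$, contradicting Definition \ref{D:1} — is a standard feature of these categories (cf.\ \cite{CGP14,GKP1}, where a regular simple object of generic degree is shown to generate the ideal of projectives); alternatively one may simply invoke the general fact from the theory of modified traces that a simple object lying in an ideal that supports a non-zero m-trace has non-zero modified dimension. Either way $\qd(V_i)\neq0$ for all $i\in I_g$, so $D_g$ is invertible and we are done.

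The step I expect to be the main obstacle is exactly this last one: pinning down why $\qd(V_i)\neq0$. Granting that as known input, the proof reduces to the one-line determinant identity $\det S_g=\det S'_g\cdot\prod_{i\in I_g}\qd(V_i)$ read off from the display above.
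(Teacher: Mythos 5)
Your proof follows the same route as the paper: both observe that $S_g = S'_g D_g$ with $D_g = \operatorname{diag}\bigl(\qd(V_i)\bigr)_{i\in I_g}$ and conclude from the invertibility of $D_g$. The paper states this diagonal matrix is invertible without further comment; you rightly flag the non-vanishing of the $\qd(V_i)$ as the substantive content behind that assertion and sketch the standard argument (a simple object $V$ with $\qd(V)=0$ forces the m-trace to vanish on the whole ideal it generates), while being candid that the last step — identifying that ideal with $\operatorname{Proj}(\mcC)$, or invoking the corresponding result from the CGP/GKP framework — is taken as known input. That added transparency is a fair improvement; just note that the second alternative you offer ("a simple object in an ideal supporting a non-zero m-trace has non-zero modified dimension") is essentially equivalent to what you are trying to prove and would itself need a citation or the ideal-exhaustion argument to back it up, so the first alternative (the regular simple objects in generic degree generate $\operatorname{Proj}(\mcC)$, cf.\ \cite{CGP14}) is the one to lean on.
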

\begin{proof}
  The lemma follows from the fact that $S_g$ and $S'_g$ differ by the
  invertible diagonal matrix $D=(\qd(V_i))_{i\in I_g}$.
\end{proof}
In the rest of this section we will use two generalizations of $S$ and
$S'$: First for $g_1,g_2\in\Gr \setminus \XX$ we define similarly the
mixed $S$-matrix
$S_{g_1,g_2}=\bp{S_{ij}}_{(i,j)\in I_{g_1}\times I_{g_2}}$. It is a
$n_1\times n_2$ matrix where $n_i$ is the number of simple $\Zt$-orbits
in $\cat_{g_i}$.
\\
Next for any $U,V\in\cat$, let $\Phi_{U,V}=\ptr_U(c_{U,V}c_{V,U})$ so
that $\Phi_{V_i,V_j}=S'_{ij}\Id_{V_j}$ and
$S_{ij}=\mt\bp{\Phi_{V_i,V_j}}$.  This operator has the following
remarkable property:
$$\Phi_{U\otimes U',V}=\Phi_{U,V}\circ\Phi_{U',V}.$$
\begin{lemma}\label{L:rk-S-mixed}
  Suppose that the mixed $S$-matrices have no zero entry, then the
  rank of all (mixed) $S$-matrices is constant.
\end{lemma}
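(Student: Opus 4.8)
The plan is to exploit the multiplicativity $\Phi_{U\otimes U',V}=\Phi_{U,V}\circ\Phi_{U',V}$ together with the hypothesis that no mixed $S$-matrix has a zero entry, in order to build, for any two generic degrees $g_1,g_2$, a pair of maps between the $\Zt$-orbit spaces of $\cat_{g_1}$ and $\cat_{g_2}$ whose composites in both directions are injective. Concretely, fix $g_1,g_2\in\Gr\setminus\XX$. Since $\XX$ is small, one can choose $h\in\Gr\setminus\XX$ with $g_1+h\notin\XX$ and $g_2+h\notin\XX$ (and, if needed, iterate to also arrange $g_1-g_2+h\notin\XX$, etc.); smallness of $\XX$ is exactly what guarantees such auxiliary degrees exist, since a finite union of cosets $g_i+\XX$ cannot exhaust $\Gr$. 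The key observation is that $\Phi$ being multiplicative in the first variable means that $S'$ behaves like a pairing that is "additive" in a suitable sense when one tensors objects, so a mixed $S$-matrix $S_{g_1,g_2}$ can be related to products of other mixed $S$-matrices.

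The main step is the following: I would show that $S_{g_1+g_2,h}$ (or an appropriate block thereof) factors through $S_{g_1,h}$ and $S_{g_2,h}$ in a way that forces $\operatorname{rank}(S_{g_1+g_2,h})\le \min(\operatorname{rank} S_{g_1,h},\operatorname{rank} S_{g_2,h})$, and conversely that tensoring with a fixed simple object in $\cat_{g_2}$ and using that its $S'$-entries are all nonzero (the no-zero-entry hypothesis) yields a reverse inequality. More precisely: a simple object $V_j\in\Theta(g_2)$ is a retract of $V_i\otimes X$ for a suitable $X$ in a complementary degree, by generic semisimplicity applied after tensoring; running $\Phi_{-,V_k}$ through this retract and using multiplicativity, together with the fact that all the relevant $S'$ scalars are invertible (nonzero), lets one express each row of $S_{g_2,h}$ as an invertible-scalar combination tied to rows of $S_{g_1,h}$, and vice versa. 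Chaining these inequalities across all pairs of generic degrees gives that all ranks coincide.

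I expect the main obstacle to be bookkeeping the retracts and degrees correctly: one must verify that the auxiliary degrees stay outside $\XX$ at every stage (this is where one repeatedly invokes smallness of $\XX$), that the simple objects produced by generic semisimplicity after tensoring are genuinely among the $\Theta(\cdot)$ up to $\Zt$-orbit, and that the multiplicativity of $\Phi$ interacts correctly with partial traces and with the modified dimension when passing between $S$ and $S'$ (here Lemma~\ref{L:Snon-degS'} and its mixed analogue let us work with $S'$ throughout, where the scalars multiply cleanly). A secondary subtlety is that $\Zt$-orbits, not individual simple objects, index the matrices, so one must check the constructions descend to orbit level, which follows because tensoring by $\sigma(k)$ only rescales the braiding by $\psi(g,k)\in\kk^*$ (Equation~\eqref{eq:psi}) and hence multiplies an $S'$-entry by a nonzero scalar. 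Once these compatibilities are in place, the rank-equality is a short linear-algebra argument.
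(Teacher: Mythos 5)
Your overall strategy — exploit the multiplicativity $\Phi_{U\otimes U',V}=\Phi_{U,V}\circ\Phi_{U',V}$, use the no-zero-entry hypothesis to make scalars invertible, and chain rank inequalities in both directions across generic degrees — is the right plan, and your observations about smallness of $\XX$ and passing to $\Zt$-orbits are correct supporting remarks. But there is a concrete error in the key step: you claim $\operatorname{rank} S_{g_1+g_2,h}\le\min(\operatorname{rank} S_{g_1,h},\operatorname{rank} S_{g_2,h})$, and this is the \emph{opposite} of what the fusion identity actually gives. Writing $V_{i_1}\otimes V_{i_2}\cong\bigoplus_{i_3\in I_{g_1+g_2}}c_{i_1,i_2}^{i_3}V_{i_3}$ and applying $\Phi_{-,V_{i_4}}$ plus multiplicativity (with $V_{i_4}$ simple) yields the scalar identity $\qd(V_{i_4})^{-1}S_{i_1,i_4}S_{i_2,i_4}=\sum_{i_3}c_{i_1,i_2}^{i_3}S_{i_3,i_4}$. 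Fixing $i_1$ and using that $S_{i_1,i_4}\neq 0$ for all $i_4$, this reads $S_{g_2,g_4}=C\cdot S_{g_1+g_2,g_4}\cdot D$ with $D$ an invertible diagonal matrix; hence $\operatorname{rank} S_{g_2,g_4}\le\operatorname{rank} S_{g_1+g_2,g_4}$. So the row space of the \emph{component-degree} matrix is contained in that of the \emph{sum-degree} matrix, not the reverse. Your proposed factorization in the other direction would require a left inverse of the fusion-coefficient matrix $C$, which need not exist.

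The reverse inequality is obtained much more cheaply than via the retract construction you sketch: just apply the inequality already proved to the degree tuple $(-g_1,\,g_1+g_2,\,g_2,\,g_4)$ (legitimate since $\XX$ is symmetric, so $-g_1\in\Gr\setminus\XX$), which gives $\operatorname{rank} S_{g_1+g_2,g_4}\le\operatorname{rank} S_{g_2,g_4}$. Together with $S_{g_4,g_2}={}^tS_{g_2,g_4}$, this forces all ranks to coincide. Your ``retract of $V_i\otimes X$'' idea is not needed and, as written, is hard to make rigorous because you would still have to control the multiplicity matrices that appear. So the proposal has the right ingredients but the direction of the central inequality is reversed and the plan for the converse is vaguer and more roundabout than necessary.
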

\begin{proof}
  If $g_1,g_2,g_3,g_4\in\Gr \setminus \XX$ with $g_3=g_1+g_2$, then
  for $(i_1,i_2,i_4)\in I_{g_1}\times I_{g_2}\times I_{g_4}$ we have a
  fusion formula
  $V_{i_1}\otimes V_{i_2}=\bigoplus_{i_3\in
    I_{g_3}}c_{i_1,i_2}^{i_3}V_{i_3}$.  
  Then since $V_{i_4}$ is simple we have
  $\mt\bp{\Phi_{V_{i_1}\otimes
      V_{i_2},V_{i_4}}}=\mt\bp{\Phi_{V_{i_1},V_{i_4}}\circ\Phi_{V_{i_2},V_{i_4}}}
  =\qd(V_{i_4})^{-1}\mt\bp{\Phi_{V_{i_1},V_{i_4}}}\mt\bp{\Phi_{V_{i_2},V_{i_4}}}$.
  This gives
  $$\qd(V_{i_4})^{-1}S_{i_1,i_4}S_{i_2,i_4}=\sum_{i_3\in
    I_{g_3}}c_{i_1,i_2}^{i_3}S_{i_3,i_4}.$$ Let us fix any $i_1$, then
  we have the equality of matrices:
  $S_{g_2,g_4}=\bp{c_{i_1,i_2}^{i_3}}_{i_2,i_3}\times
  S_{g_3,g_4}\times D$ where $D$ is the diagonal matrix
  $D_{i_4,i_4}=\qd(V_{i_4})S_{i_1,i_4}^{-1}$.  Thus
  $\rk(S_{g_2,g_4})\le\rk(S_{g_3,g_4})$.  Applying the same argument
  to $-g_1,g_3,g_2,g_4$ we get the inverse inequality thus we have
  $\rk(S_{g_2,g_4})=\rk(S_{g_3,g_4})$.  Finally the lemma follows
  since the matrix $S_{g_4,g_2}$ is the transpose of $S_{g_2,g_4}$.
\end{proof}
\begin{proposition}\label{DMug}
  Suppose that $\cat$ is abelian, $\{ \sigma(k)\}_{k\in\Zt}$ has no self
  extensions and that $\cat_0$ has $N$ simple
  $\Zt$-orbits. If for some $g\in\Gr\setminus\XX$, we have: 
\begin{enumerate}
\item $S_g$ is a $N\times N$ invertible matrix
\item Both $S_g$ and
  $S_{-g,g}$ have a row with no zero entry then the 
  Müger center of $\cat_0$ is $\Zt$-trivial. 
\end{enumerate}
then the Müger center of $\cat_0$ is $\Zt$-trivial.
\end{proposition}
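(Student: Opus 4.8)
The plan is to show that every simple object $V$ in the Müger center of $\cat_0$ lies in the $\Zt$-orbit of $\unit$; since $\{\sigma(k)\}_{k\in\Zt}$ has no self extensions, the whole Müger center is then $\Zt$-trivial. So let $V\in\cat_0$ be a simple transparent object. Transparency means $c_{V,U}c_{U,V}=\Id_{U\otimes V}$ for all $U\in\cat$, and hence $\Phi_{U,V}=\ptr_U(c_{U,V}c_{U,V})=\qd(U)\,\Id_V$ for every $U$; taking m-traces, $\mt(\Phi_{V_i,V})=\qd(V)\qd(V_i)$ for each of the $N$ simple $\Zt$-orbit representatives $V_i$, $i\in I_g$, in $\Theta(g)$.

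The first main step is to incorporate $V$ into the generic-semisimplicity machinery in degree $g$. Because $\{V_i\otimes\sigma(k)\mid i\in I_g,\ k\in\Zt\}$ is a completely reduced dominating set for $\cat_g$ and $V\in\cat_0$, each object $V_i\otimes V$ lies in $\cat_g$ and so decomposes (up to negligibles, or in the abelian/semisimple-generic setting honestly) with a well-defined ``character'' given by the vector $\bigl(S_{ij}\bigr)_{j}$ of Hopf-link invariants against the $V_j$. The key computation, exactly as in the proof of Lemma~\ref{L:rk-S-mixed}, is the fusion identity: writing $V_i\otimes V=\bigoplus_{i'\in I_g} c_i^{i'}(V_i'\text{'s and }\Zt\text{-translates})$ and pairing with $V_j$ via $\Phi_{-,V_j}$ and the multiplicativity $\Phi_{U\otimes U',V_j}=\Phi_{U,V_j}\circ\Phi_{U',V_j}$, one gets
\[
\qd(V_j)^{-1}\,S_{ij}\,\mt\bigl(\Phi_{V,V_j}\bigr)=\sum_{i'} \tilde c_i^{i'}\,S_{i'j}.
\]
But $\mt(\Phi_{V,V_j})=\qd(V_j)\qd(V)$ by transparency, so the left side is $\qd(V)\,S_{ij}$. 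Thus the vector $\bigl(\qd(V)S_{ij}\bigr)_{i}$ lies in the row span of $S_g$, i.e. the ``$S$-character'' of $V\otimes V_j$ is a combination of those of the $V_i$. Since $S_g$ is an invertible $N\times N$ matrix, the decomposition multiplicities of $V_i\otimes V$ into the $\Zt$-orbits of the $V_{i'}$ are forced, and in particular $V$ must itself be (up to $\Zt$-translation) one of the $V_i$ — more precisely, tensoring a simple transparent object with a completely reduced dominating family and reading off $S$-characters via the invertibility of $S_g$ pins down $V$ uniquely in $\cat_g/\text{(negligibles)}$, forcing $V\otimes V_{i_0}\cong V_{i_0}\otimes\sigma(k)$ for the representative. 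By Definition~\ref{free}(5) this would already force $V\cong\sigma(k)$ if $V_{i_0}$ were $\sigma$-rigid; in general one concludes $V$ is in the $\Zt$-orbit of $\unit$.

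The second main step — and where the hypotheses about a row with no zero entry of $S_g$ and of $S_{-g,g}$ get used — is to handle the abelian (non-semisimple) subtleties: a priori $V\otimes V_j$ need not be semisimple, only has a Jordan–Hölder filtration, and one must know its composition factors are again among the $\Theta(g)$-orbits rather than among other modules killed in the quotient. Here one argues as in Lemma~\ref{L:rk-S-mixed}: the no-zero-entry rows give that the relevant change-of-basis matrices relating $S_{g,g}$, $S_{-g,g}$ and $S_{g_2,g}$ are honest (the diagonal factors $D_{i_4,i_4}=\qd(V_{i_4})S_{i_1,i_4}^{-1}$ are invertible), so the rank of every mixed $S$-matrix equals $N$; combined with ``$\{\sigma(k)\}$ has no self extension'' (which makes $\cat_0$'s $\unit$-filtered objects semisimple), this rules out transparent objects having composition factors outside the $\Zt$-orbits of $\unit$, and upgrades the $S$-character argument of Step~1 from ``up to negligibles'' to an honest isomorphism statement about the $\Zt$-orbit of $V$.

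I expect the main obstacle to be precisely this interface between the non-semisimple abelian structure of $\cat_0$ and the semisimple generic behaviour in degree $g$: making rigorous the claim that pairing the transparent $V$ against $\Theta(g)$ detects its class, i.e. that the ``$S$-character'' map is injective on $\Zt$-orbits of transparent simples, and that no transparent simple can hide as a non-split extension or as an object negligible in $\cat_g$. The algebraic fusion identity and the invertibility/no-zero-row hypotheses are exactly tailored to push through this detection argument; the bookkeeping with $\Zt$-translates and the use of Definition~\ref{free}(5) to eliminate the ambiguity $V\leftrightarrow V\otimes\sigma(k)$ is the remaining routine-but-delicate part.
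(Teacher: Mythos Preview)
There is a genuine gap in your argument at the point where you pass from the fusion identity to the conclusion that $V$ lies in the $\Zt$-orbit of $\unit$. The identity you obtain, after using transparency of $V$ and invertibility of $S_g$, is
\[
\sum_{k\in\Zt} n_{i,i',k}\,\psi(g,k)\;=\;\dim_q(V)\,\delta_{i,i'}\,,
\]
where $n_{i,i',k}\in\ZZ_{\ge0}$ are the honest multiplicities in $V_i\otimes V\cong\bigoplus_{i',k}n_{i,i',k}\,V_{i'}\otimes\sigma(k)$ and $\dim_q(V)$ is the \emph{categorical} dimension (not the modified one; $V$ need not be projective so $\qd(V)$ is not even defined). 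From this you cannot conclude $n_{i,i',k}=0$ for $i\neq i'$: the values $\psi(g,k)$ may satisfy linear relations, so cancellations are possible. Even granting $V_i\otimes V$ lies entirely in the $\Zt$-orbit of $V_i$, there is no cancellation principle letting you strip off $V_i$ to deduce $V\cong\sigma(k)$; Definition~\ref{free}(5) does not do this. Your sketch also never uses the hypothesis that $\cat_0$ has exactly $N$ simple $\Zt$-orbits, nor the no-zero-entry row of $S_{-g,g}$, both of which are essential.

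The paper's argument is structurally different. It fixes representatives $\{U_j\}_{j\in I_0}$ of the $N$ simple $\Zt$-orbits in $\cat_0$ (with $U_1=\unit$ and $U_t$ the putative transparent simple), and introduces an auxiliary $N\times N$ matrix $S_0^\alpha$ with entries $\mt\bigl(\Phi_{V_\alpha\otimes U_i,\,V_\alpha\otimes U_j}\bigr)$, where $V_\alpha\in\cat_g$ is chosen from the no-zero-entry row of $S_g$. Transparency of $U_t$ makes row $t$ of $S_0^\alpha$ proportional to row $1$, so $S_0^\alpha$ is singular. On the other hand, using Jordan--H\"older series (tensored by the projective $V_\alpha$ so they split) and the no-zero-entry rows of $S_g$ and $S_{-g,g}$, one factors $S_0^\alpha=C\,{}^t\!A$ with both $C$ and $A$ invertible (the auxiliary object $V_\beta\in\cat_{-g}$ enters here to make a certain diagonal matrix $D$ invertible). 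This contradiction is the heart of the proof; your proposal lacks any analogue of this $S_0^\alpha$ device, and the direct $S$-character argument in degree $g$ alone is not sharp enough to detect which simple of $\cat_0$ the object $V$ is.
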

\begin{proof}
  The hypothesis 2) implies that there exists $V_\alpha\in\cat_g$ and
  $V_\beta\in\cat_{-g}$ such that for any $j\in I_g$,
  $\Phi_{V_\alpha,V_j}\neq0$ and $\Phi_{V_\beta,V_j}\neq0$.

  Let us suppose that the Müger center of $\cat_0$ contains a non
  $\Zt$-trivial simple object $U$. 
  Let $\{U_j \; | \; j \in I_0 \}$ be a collection of representant of
  the simple $\Zt$-orbits of $\cat_0$ that contains $U_1= \unit$ and
  $U_t=U$.  Let $S_0^{\alpha}$ be the $N\times N$ matrix whose entries
  are
  $(S_0^{\alpha})_{ij}:=\mt\bp{\Phi_{V_{\alpha} \otimes U_i,
      V_{\alpha} \otimes U_j}}$. Applying the properties of braidings
  and the fact that $U_t$ is transparent in $\cat_0$, we see that
  \begin{align*}
    \Phi_{V_{\alpha} \otimes U_t,V_{\alpha} \otimes U_j}
    &=\Phi_{V_{\alpha} ,V_{\alpha} \otimes U_j}\circ\Phi_{U_t,V_{\alpha} \otimes U_j}\\
    &=\Phi_{V_{\alpha} ,V_{\alpha} \otimes U_j}\circ\bp{\Phi_{U_t,V_{\alpha}} \otimes \Id_{U_j}}\\
    &=\Phi_{V_{\alpha} ,V_{\alpha} \otimes U_j}\circ\bp{\qd(V_\alpha)^{-1}\mt\bp{\Phi_{U_t,V_{\alpha}}}
      \Id_{V_{\alpha}} \otimes \Id_{U_j}}\\
    &=\qd(V_\alpha)^{-1}\mt\bp{\Phi_{U_t,V_{\alpha}}}\Phi_{V_{\alpha} \otimes U_1,V_{\alpha} \otimes U_j}
\end{align*}
Taking the modified trace we get
$(S_0^{\alpha})_{tj}=\qd(V_\alpha)^{-1}\mt\bp{\Phi_{U_t,V_{\alpha}}}(S_0^{\alpha})_{1j}$
for all $j \in I_0$. Therefore, the $t$-th row of $S_0^{\alpha}$ is
proportional to the first row, so $S_0^{\alpha}$ is degenerate.


Let $M\in\cat_0$.  Then we claim that there exists $(T_i)_{i\in I_0}$
a sequence of $\Zt$-trivial modules such that for any projective
module $P\in\cat$, one has
$P\otimes M\simeq P\otimes \bp{\bigoplus_{i\in I_0}T_i\otimes U_i}$.
Indeed if $M=M_0\supset M_1\supset\cdots\supset M_n=\{0\}$ is a
Jordan-Hölder series of $M$ (with $M_j/M_{j+1}$ simple), then since $P$
is projective, the short exact sequence $0\to M_{j+1}\to M_j\to M_j/M_{j+1}\to 0$
splits when tensored by $P$ and
$P\otimes M_{j}\simeq \bp{P\otimes M_{j+1}}\oplus \bp{P\otimes
  M_j/M_{j+1}}$ where $M_j/M_{j+1}$ is isomorphic to
$\sigma(k_j)\otimes U_{i_j}$ for some $k_j\in\Zt$.  Then by induction
the claim follows with the modules
$T_i=\bigoplus_{U_{i_j}=U_i}\sigma(k_j)$.
We apply this to $P=V_\alpha$ and $M=V_\beta\otimes V_i\in\cat_0$
to produce $\Zt$-trivial objects $T_{i,j}$ such that
\begin{equation}
  \label{eq:VtoU}
  V_\alpha\otimes V_\beta\otimes V_i\simeq V_\alpha\otimes \bp{\bigoplus_{j\in
    I_0}T_{i,j}\otimes U_j}.
\end{equation}
On an other side, we can write a semisimple decomposition of
$V_\alpha\otimes U_k$: since the simple modules of $\cat_g$ are
isomorphic to $\sigma(z)\otimes V_i$ for some $z\in\Zt,i\in I_g$,
there exists $\Zt$-trivial modules $T'_{k,i}$ such that
\begin{equation}
  \label{eq:UtoV}
  V_\alpha\otimes U_k\simeq\bigoplus_{i\in I_g}T'_{k,i}\otimes V_i.
\end{equation}

Now for any $\Zt$-trivial object $T$ and any $g\in\Gr$, Equation
\eqref{eq:psi} implies that there exists a scalar $\Phi_{T|g}$ such
that for any $V\in\cat_g$, $$\Phi_{T,V}=\Phi_{T|g}\Id_V.$$ In
particular, we call $A,B,C,D$ the square $N\times N$ matrices
$A=\mt\bp{\Phi_{V_\alpha\otimes U_i,V_j}}_{i\in I_0, j\in I_g}$,
$B=(\Phi_{T_{i,j}|g})_{i\in I_g,j\in I_0}$,
$C=(\Phi_{T'_{i,j}|g})_{i\in I_0,j\in I_j}$ and $D$ the diagonal
matrix defined by $D_{ii}=\qd(V_i)^{-2}\mt\bp{\Phi_{V_\alpha,V_i}}\mt\bp{\Phi_{V_\beta,V_i}}$.
The hypothesis imply that $D$ is invertible.  Then from Equation
\eqref{eq:VtoU}, we have
\begin{align*}
  \Phi_{V_\alpha\otimes \bp{\bigoplus_{k\in I_0}T_{i,k}\otimes U_k},V_j}
  &=\sum_{k\in I_0}\Phi_{T_{i,k},V_j}\circ\Phi_{V_\alpha\otimes U_k,V_j}=\sum_{k\in I_0}B_{ik}\Phi_{V_\alpha\otimes U_k,V_j}\\
  =\Phi_{V_\alpha\otimes V_\beta\otimes V_i,V_j}
  &=\Phi_{V_\alpha,V_j}\circ\Phi_{V_\beta,V_j}\circ\Phi_{V_i,V_j}=D_{jj}S_{ij}\qd\bp{V_j}^{-1}\Id_{V_j}  
\end{align*}
and taking the modified trace, we get $BA=S_gD$. Hence $A$ is invertible.

Next from Equation \eqref{eq:UtoV},
$$A_{ij}=\mt\bp{\Phi_{V_\alpha\otimes U_i,V_j}}=\sum_k\mt\bp{\Phi_{T'_{i,k}\otimes V_k,V_j}}
=\sum_k\mt\bp{\Phi_{T'_{i,k},V_j}\circ\Phi_{V_k,V_j}}=\sum_kC_{ik}S_{kj}.$$
So $A=CS_g$ and $C$ is invertible.  Finally, 
\begin{align*}
  (S_0^{\alpha})_{ij}&=\mt\bp{\Phi_{V_{\alpha} \otimes U_i,V_{\alpha} \otimes U_j}}
                     =\mt\bp{\sum_k\Phi_{T'_{i,k}V_k,V_{\alpha} \otimes U_j}}\\
                     &=\sum_kC_{ik}\mt\bp{\Phi_{V_k,V_{\alpha} \otimes U_j}}
                     =\sum_kC_{ik}\mt\bp{\Phi_{V_{\alpha} \otimes U_j,V_k}}\\
                     &=\sum_kC_{ik}A_{jk}
\end{align*}
So $S_0^{\alpha}=C\,{^t\!A}$ is invertible and we have a contradiction.

Hence all simple objects of the Müger center of $\cat_0$ are in the
orbit of $\unit$. Now for any transparent object $M$ in $\cat_0$ all
the subobjects in a Jordan-Holder series of $M$ are transparent by
Lemma \ref{subqtrans}, and so are their simple quotient.  Hence all
these simple are in the $\Zt$-orbit of $\unit$ and since
$(\sigma_k)_{k\in\Zt}$ has no self extension, $M$ is $\Zt$-trivial.
\end{proof}

\begin{theorem}\label{T:Pre-modNonDeg}
  A pre-modular $\Gr$-category $\mcC$ is non-degenerate if there
  exists $g \in \Gr \setminus \XX$ such that the corresponding
  $S$-matrices $S_g$ and $S_{-g,g}$ are non-degenerate.
\end{theorem}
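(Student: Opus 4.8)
The plan is to verify the definition directly: by Definition~\ref{d:ndeg}, non-degeneracy of $\mcC$ is the statement $\Delta_+\Delta_-\neq 0$, so it suffices to deduce $\Delta_+\neq 0$ from the non-degeneracy of $S_g$ and $\Delta_-\neq 0$ from that of $S_{-g,g}$. Fix the index $g\in\Gr\setminus\XX$ supplied by the hypothesis. Note that $-g\in\Gr\setminus\XX$ as well ($\XX$ is symmetric), so both $\Theta(g)$ and $\Theta(-g)$ are available, and their members, being generic simple objects, have nonzero modified dimension $\qd(V_i)$; in fact under the hypothesis this is automatic for $i\in I_g$, since a zero $\qd(V_j)$ would give $S_g$ a zero column. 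By Lemma~\ref{L:Snon-degS'} I may pass freely between $S_g$ and $S'_g$.

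The key step is to evaluate the skein relation defining $\Delta_\pm$ in Definition~\ref{d:ndeg} on a simple object. Because $\Delta_\pm$ does not depend on the object or the degree used to define it (Lemma~5.10 of \cite{CGP14}), I run a simple $V_j$ with $j\in I_g$ through the defining diagram for $\Delta_+$ and a simple object of degree $-g$ through the one for $\Delta_-$. Applying the Reshetikhin-Turaev functor to both sides and expanding the $\Omega$-colored component of the relevant index as $\sum_i\qd(V_i)\,V_i$: the $\pm 1$-framed kink multiplies the $i$-th summand by its twist eigenvalue $\theta_{V_i}^{\pm 1}$, while the remainder of the tangle is exactly the partially traced double braiding whose value is $S'_{ij}$ (and, on the $\Delta_-$ side, the corresponding entry of the mixed matrix $S'_{-g,g}$, the change of matrix being dictated by the orientation of the encircling component relative to the kink). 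Reading off the resulting scalar on $\End(V_j)=\kk\,\Id_{V_j}$ for each $j$ produces, on the $\Delta_+$ side, an identity
\[
  \sum_{i\in I_g}\qd(V_i)\,\theta_{V_i}\,S'_{ij}\;=\;\Delta_+\,c_j\qquad(j\in I_g)
\]
for some scalars $c_j$; that is, the vector $\mathbf u=(\qd(V_i)\theta_{V_i})_{i\in I_g}$ satisfies ${}^{t}\!S'_g\,\mathbf u=\Delta_+\,(c_j)_j$. An entirely analogous identity holds for $\Delta_-$, now with $S'_{-g,g}$ in place of $S'_g$ and with the analogous vector $\mathbf u'$ indexed by $I_{-g}$.

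The conclusion is then pure linear algebra. Each twist $\theta_{V_i}$ is an isomorphism and each $\qd(V_i)\neq 0$, so $\mathbf u$ and $\mathbf u'$ have no zero entries, hence are nonzero. If $\Delta_+=0$, the first identity reads ${}^{t}\!S'_g\,\mathbf u=0$, contradicting the non-degeneracy of $S'_g$ (equivalently of $S_g$, equivalently of its transpose); so $\Delta_+\neq 0$. The same argument applied to the $\Delta_-$ identity, using that $S_{-g,g}$ is non-degenerate if and only if its transpose is, gives $\Delta_-\neq 0$. Therefore $\Delta_+\Delta_-\neq 0$, which is exactly non-degeneracy of $\mcC$.

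I expect the real obstacle to live entirely in the second paragraph: one must follow the orientations and the two framing signs through Lemma~5.10 of \cite{CGP14} carefully enough to be certain which of $S_g$ and $S_{-g,g}$ (and in which transposed form) is attached to $\Delta_+$ and which to $\Delta_-$, and to confirm that the vector paired against it is genuinely assembled from twist eigenvalues and modified dimensions — the only properties used being that these are nonzero. The reduction from $S_g$ to $S'_g$, the appeal to the independence statement, and the final deduction are all routine.
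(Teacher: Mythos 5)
Your proof is correct and follows essentially the same route as the paper: evaluate the skein relation defining $\Delta_\pm$ on a simple $V_j$, express the result as a pairing of the $S'$-matrix against a vector of twist eigenvalues times modified dimensions (all nonzero), and conclude from non-degeneracy. The only slip is cosmetic: the paper attaches $S_g$ to $\Delta_-$ and $S_{-g,g}$ to $\Delta_+$ (with inverse twists), opposite to your labelling, but since both non-degeneracy hypotheses are used and the conclusion is $\Delta_+\Delta_-\neq 0$, this does not affect the argument.
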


\begin{proof}
Fix such a $g \in \Gr \setminus \XX$.  We need to prove that $\Delta_{+}\Delta_{-}\neq 0$.  For $i\in I_g$, let $t_i$ be the scalar determined by the twist on $V_i$.  If $V=V_j$ in the Definition \ref{d:ndeg} then the scalar $\Delta_{-}$ is equal to 
\begin{equation}\label{E:D+}
   \Delta_{-}= t_j^{-1} \sum_{i\in I_g} S'_{ij} t_i^{-1} \qd(V_i)
\end{equation}
and it does not depend of $g\in \Gr \setminus \XX$ nor of $j\in I_g$.
Now let $T=(\delta^i_{j}t_{j}^{-1})_{i,j\in I_g}$ be the invertible diagonal matrix of twists inverse.  Also, let $T_c=(t_i^{-1} \qd(V_i))_{i\in I_g}$ be the column vector of inverse twists times modified dimensions.    If $ \Delta_{-}$ is zero then Equation \eqref{E:D+} can be written as the following matrix multiplication:  
$$T(S'_{ij})T_c=\bar 0.$$
This would imply that the matrix $(S'_{ij})$ would be degenerate.  But
by Lemma \ref{L:Snon-degS'} this would be a contradiction.  Thus,
$ \Delta_{-}$ is non-zero.  Similarly, using that $(S_{i^*j})$ is non
degenerate, $\Delta_{+}$ is non-zero and the theorem follows.
\end{proof}

\section{Modularity / Transparent Morphisms}\label{relmod}

\begin{lemma}\label{trans}
Let $\mcC$ be an abelian braided category with left or right exact tensor product. If $V \in \mcC$ and $f:V \to V$ is transparent, then $\mathrm{Im}(f)$ is a transparent object.
\end{lemma}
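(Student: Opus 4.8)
The plan is to factor $f:V\to V$ through its image in the standard way and then transfer transparency across this factorization, imitating the argument of Lemma \ref{subqtrans}. Write $f = \iota\circ\pi$ where $\pi:V\twoheadrightarrow \mathrm{Im}(f)$ is the (epic) corestriction of $f$ and $\iota:\mathrm{Im}(f)\hookrightarrow V$ is the canonical (monic) inclusion of the image. Note that $\mathrm{Im}(f)$ is both a quotient of $V$ (via $\pi$) and a subobject of $V$ (via $\iota$); if the tensor product were biexact we could simply invoke Lemma \ref{subqtrans} directly, but here we are only assuming one-sided exactness, so we must argue by hand and use the transparency hypothesis on the \emph{morphism} $f$ rather than just on $V$.

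First I would spell out what it means for $\mathrm{Im}(f)$ to be transparent: for each $W\in\mcC$ we must show $c_{\mathrm{Im}(f),W}\circ c_{W,\mathrm{Im}(f)} = \mathrm{Id}_{W\otimes \mathrm{Im}(f)}$. The key computation starts from the two transparency relations for $f$, namely $c_{V,W}\circ(f\otimes\mathrm{Id}_W)\circ c_{W,V} = \mathrm{Id}_W\otimes f$ (in the form with $f$ on the appropriate side) — I would use naturality of the braiding to rewrite $\mathrm{Id}_W\otimes f = (\mathrm{Id}_W\otimes\iota)\circ(\mathrm{Id}_W\otimes\pi)$ and push $\pi$ and $\iota$ through the braidings on the left-hand side. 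Concretely, naturality gives $c_{W,V}\circ(\mathrm{Id}_W\otimes\iota) = (\iota\otimes\mathrm{Id}_W)\circ c_{W,\mathrm{Im}(f)}$ and similarly $(\pi\otimes\mathrm{Id}_W)\circ c_{W,V} = c_{W,\mathrm{Im}(f)}\circ(\mathrm{Id}_W\otimes\pi)$, and dually on the other side. Combining these, the relation $c_{V,W}\circ(f\otimes\mathrm{Id}_W)\circ c_{W,V} = \mathrm{Id}_W\otimes f$ should collapse, after cancelling $\iota\otimes\mathrm{Id}_W$ on the left and $\mathrm{Id}_W\otimes\pi$ on the right, to $\bigl(\mathrm{Id}_W\otimes\iota\bigr)\circ c_{\mathrm{Im}(f),W}\circ c_{W,\mathrm{Im}(f)}\circ\bigl(\mathrm{Id}_W\otimes\pi\bigr) = (\mathrm{Id}_W\otimes\iota)\circ(\mathrm{Id}_W\otimes\pi)$, i.e. the desired identity precomposed with the epi $\mathrm{Id}_W\otimes\pi$ and postcomposed with the mono $\mathrm{Id}_W\otimes\iota$.

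At this point I would invoke the exactness hypothesis exactly once: if the tensor product is right exact then $\pi$ epic implies $\mathrm{Id}_W\otimes\pi$ is epic, and we can cancel it on the right; if instead the tensor product is left exact then $\iota$ monic implies $\mathrm{Id}_W\otimes\iota$ is monic and we cancel it on the left. Either way we are left with $c_{\mathrm{Im}(f),W}\circ c_{W,\mathrm{Im}(f)} = \mathrm{Id}_{W\otimes\mathrm{Im}(f)}$, and since $W$ was arbitrary, $\mathrm{Im}(f)$ is transparent. I expect the main obstacle to be purely bookkeeping: getting the variances right in the two transparency axioms for $f$ (there are two, one with $f$ tensored on the left and one on the right) and matching them correctly with the two one-sided naturality squares so that the cancellation goes through with only one-sided exactness — one must be careful to use the transparency axiom whose cancellation requires precisely the kind of exactness that is assumed. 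A clean way to organize this is to prove the "right exact" case in full and then remark that the "left exact" case follows by the mirror argument using the other transparency relation for $f$, or by passing to the reverse braided category.
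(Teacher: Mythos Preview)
Your strategy is exactly the paper's: factor $f=\iota\circ\pi$ through its image, push $\iota$ and $\pi$ through the double braiding by naturality to reach
\[
(\mathrm{Id}_W\otimes\iota)\circ c_{\mathrm{Im}(f),W}\circ c_{W,\mathrm{Im}(f)}\circ(\mathrm{Id}_W\otimes\pi)=(\mathrm{Id}_W\otimes\iota)\circ(\mathrm{Id}_W\otimes\pi),
\]
and then cancel. The paper does precisely this computation and then says that in the right exact case one starts from the second transparency equation instead.

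There is one point to flag. You assert that after a \emph{single} cancellation ``either way we are left with $c_{\mathrm{Im}(f),W}\circ c_{W,\mathrm{Im}(f)}=\mathrm{Id}$''. That is not what a single cancellation gives: cancelling only the mono $\mathrm{Id}_W\otimes\iota$ on the left leaves the relation still precomposed with $\mathrm{Id}_W\otimes\pi$, and to strip that off you would need it to be epic---which left exactness alone does not guarantee. Passing to the other transparency axiom does not circumvent this: the same manipulation yields an equation sandwiched between $\iota\otimes\mathrm{Id}_W$ and $\pi\otimes\mathrm{Id}_W$, so again both a mono and an epi cancellation are required. The paper's proof in fact makes the same leap (it asserts that under left exactness both $\mathrm{Id}_U\otimes\iota$ is monic \emph{and} $\mathrm{Id}_U\otimes f|$ is epic), so your argument is at the paper's level of detail; but your remark that one should ``use the transparency axiom whose cancellation requires precisely the kind of exactness that is assumed'' suggests you sensed something was off, and indeed neither axiom on its own reduces the problem to a single cancellation.
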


\begin{proof}
We assume first the tensor product is left exact. Since $f$ is transparent, we have
\begin{align*}
c_{V,U} \circ (f \otimes \mathrm{Id}_U) \circ c_{U,V}&=\mathrm{Id}_U \otimes f,\\
c_{W,V} \circ (\mathrm{Id}_W \otimes f) \circ c_{V,W}&=f \otimes \mathrm{Id}_W,
\end{align*}
for any $U,W \in \mcC$. $\mcC$ is abelian, so we can decompose $f:V \to V$ as $f=\iota \circ f|$ where $f|:V \to \mathrm{Im}(f)$ is epic and $\iota:\mathrm{Im}(f) \to V$ is monic. Substituting this formula and applying naturality of the braiding isomorphism to the first equation gives
\begin{align*}
(\mathrm{Id}_U \otimes \iota ) \circ( \mathrm{Id}_U \otimes f|)&=c_{V,U} \circ (\iota \otimes \mathrm{Id}_U) \circ (f| \otimes  \mathrm{Id}_U ) \circ c_{U,V} \\
&= (\mathrm{Id}_U \otimes \iota) \circ c_{\mathrm{Im}(f),U} \circ c_{U,\mathrm{Im}(f)} \circ (\mathrm{Id}_U \otimes f|)
\end{align*}
for all $U \in \mcC$. Since the tensor product is left exact and $\iota$ is monic and $f|$ is epic, it follows that $\mathrm{Id}_U \otimes \iota$ is monic and $\mathrm{Id}_U \otimes f|$ is epic, therefore
\[ c_{\mathrm{Im}(f),U} \circ c_{U,\mathrm{Im}(f)}=\mathrm{Id}_{U \otimes \mathrm{Im}(f)}\]
for all $U \in \mcC$, so $\mathrm{Im}(f)$ is transparent. If the tensor product is right exact, start with the second equation rather than the first.
\end{proof}

\begin{lemma}\label{factor}
Let $\mcC$ be a graded category satisfying the conditions of Lemma \ref{trans}. Then, if $f:V \to V$ is transparent in $\mathcal{C}_0$, it factors through the M\"{u}ger center of $\mathcal{C}_0$. Further, if $\mathcal{C}$ admits a free realization of an abelian group $\Zt$ in $\mathcal{C}_0$ such that the Müger center of $\mcC_0$ is $Z$-trivial (recall Definition \ref{Ztriv}), then 
\begin{enumerate}
\item[1)] If $f:V \to V$ is transparent in $\mcC_0$, then $f$ factors through the free realization of $\mcC$. That is, there exists some integer $m$ and morphisms $g_i \in \Hom_{\mcC}(V,\sigma(k_i))$ and $h_i \in \Hom_{\mcC}(\sigma(k_i),V)$ for $i \in \{1,...,m\}$ such that
\[ f=\sum\limits_{i=1}^m h_i \circ g_i.\]
\item[2)] If $f:V \otimes V^* \to V \otimes V^*$ is a transparent endomorphism in $\mcC_0$ for some simple object $V$, then $f=\lambda (\overrightarrow{\mathrm{coev}}_V \circ \overleftarrow{\mathrm{ev}}_V)$ for some scalar $\lambda \in \mathbb{C}$.
\end{enumerate}
\end{lemma}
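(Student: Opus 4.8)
The plan is to work throughout inside the braided subcategory $\mcC_0$, which again satisfies the hypotheses of Lemma~\ref{trans}: it is abelian as a direct summand of $\mcC$, its tensor product is the restriction of that of $\mcC$ (because $0+0=0$ in $\Gr$) and hence one-sided exact, and it is closed under subobjects and quotients in $\mcC$ since $\Hom_\mcC(V,W)=0$ for homogeneous $V,W$ of distinct degrees. For the first assertion, factor a morphism $f\colon V\to V$ that is transparent in $\mcC_0$ as $f=\iota\circ\bar f$ in the abelian category $\mcC_0$, with $\bar f\colon V\onto\mathrm{Im}(f)$ epic and $\iota\colon\mathrm{Im}(f)\into V$ monic; by Lemma~\ref{trans} applied to $\mcC_0$, the object $\mathrm{Im}(f)$ is transparent, i.e.\ it lies in the Müger center of $\mcC_0$, so $f$ factors through that center.

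For part (1), I would use the hypothesis that the Müger center of $\mcC_0$ is $\Zt$-trivial. Then $\mathrm{Im}(f)$ is a direct sum of objects in the $\Zt$-orbit of $\unit$, each of which is isomorphic to $\sigma(0)\otimes\sigma(k)=\sigma(k)$ for some $k\in\Zt$ by conditions (1) and (3) of Definition~\ref{free}; moreover a $\Zt$-trivial object is semisimple and $\mathrm{Im}(f)$, being a quotient of $V$, has finite length in the setting at hand, so $\mathrm{Im}(f)\cong\bigoplus_{i=1}^m\sigma(k_i)$. Composing $\bar f$ with the structure projections and $\iota$ with the structure injections of this finite biproduct produces $g_i\in\Hom_\mcC(V,\sigma(k_i))$ and $h_i\in\Hom_\mcC(\sigma(k_i),V)$ with $f=\sum_{i=1}^m h_i\circ g_i$.

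For part (2), note first that a simple object is homogeneous, say $V\in\mcC_g$, so $V^*\in\mcC_{-g}$, $V\otimes V^*\in\mcC_0$, and part (1) applies and writes $f=\sum_i h_i\circ g_i$ with $g_i\colon V\otimes V^*\to\sigma(k_i)$ and $h_i\colon\sigma(k_i)\to V\otimes V^*$. I would then pin down the relevant Hom-spaces. Using the duality adjunctions together with the braiding, $\Hom_\mcC(V\otimes V^*,\sigma(k))\cong\Hom_\mcC(V,V\otimes\sigma(k))$ and $\Hom_\mcC(\sigma(k),V\otimes V^*)\cong\Hom_\mcC(V\otimes\sigma(k),V)$; since $\sigma(k)$ is invertible ($\sigma(k)\otimes\sigma(-k)=\unit$), the object $V\otimes\sigma(k)$ is again simple, and a nonzero morphism between simple objects of an abelian category is an isomorphism, so both Hom-spaces vanish unless $V\otimes\sigma(k)\cong V$, that is, unless $k=0$ by condition (5) of Definition~\ref{free}. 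Hence only the terms with $k_i=0$ survive, so $f=\sum_i h_i\circ g_i$ with $g_i\in\Hom_\mcC(V\otimes V^*,\unit)$ and $h_i\in\Hom_\mcC(\unit,V\otimes V^*)$. Finally, for $V$ simple one has $\Hom_\mcC(V\otimes V^*,\unit)\cong\End_\mcC(V)=\kk\,\Id_V$, the line spanned by $\overleftarrow{\mathrm{ev}}_V$, and likewise $\Hom_\mcC(\unit,V\otimes V^*)\cong\End_\mcC(V)=\kk\,\Id_V$ is spanned by $\overrightarrow{\mathrm{coev}}_V$; writing $g_i=a_i\,\overleftarrow{\mathrm{ev}}_V$ and $h_i=b_i\,\overrightarrow{\mathrm{coev}}_V$ we get $f=\bigl(\sum_i b_i a_i\bigr)\overrightarrow{\mathrm{coev}}_V\circ\overleftarrow{\mathrm{ev}}_V=\lambda\,(\overrightarrow{\mathrm{coev}}_V\circ\overleftarrow{\mathrm{ev}}_V)$.

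No step is deep; the places that require care are the compatibility of the rigid/ribbon duality adjunctions with the $\Gr$-grading and the braiding (so that condition (5) of Definition~\ref{free} is applied to exactly the object $V\otimes\sigma(k)$, and so that $\overleftarrow{\mathrm{ev}}_V$, $\overrightarrow{\mathrm{coev}}_V$ really generate the respective one-dimensional spaces), the use of simplicity of $V$ in the Schur-type step, and the finiteness of the decomposition $\mathrm{Im}(f)\cong\bigoplus_i\sigma(k_i)$. These last two points are where something beyond the purely formal hypotheses enters --- namely the finiteness/semisimplicity assumptions on the categories to which the lemma is applied --- and this is the part of the argument I would check most carefully against the running conventions.
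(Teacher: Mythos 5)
Your argument is correct and follows essentially the same route as the paper: factor $f$ through its image in the abelian category $\mcC_0$, invoke Lemma~\ref{trans} to see that $\mathrm{Im}(f)$ is transparent and hence lies in the Müger center, use $\Zt$-triviality to write $\mathrm{Im}(f)\cong\bigoplus_i\sigma(k_i)$ and split $f$ accordingly, and in part (2) use the duality/braiding isomorphisms $\Hom_\mcC(V\otimes V^*,\sigma(k))\cong\Hom_\mcC(V,V\otimes\sigma(k))$ and $\Hom_\mcC(\sigma(k),V\otimes V^*)\cong\Hom_\mcC(V\otimes\sigma(k),V)$ together with condition (5) of Definition~\ref{free} to kill all $k\neq0$ contributions and reduce to the one-dimensional spaces $\Hom(V\otimes V^*,\unit)$ and $\Hom(\unit,V\otimes V^*)$. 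The only presentational difference is that you route part (2) explicitly through the output of part (1), whereas the paper goes straight from the Hom isomorphisms to $\mathrm{Im}(f)\cong\mathds{1}$; the substance is identical, and your extra remarks (that a simple object is homogeneous so $V\otimes V^*\in\mcC_0$, that $V\otimes\sigma(k)$ is again simple because $\sigma(k)$ is invertible, and that the finiteness of the multiplicities $c_k^f$ is an implicit hypothesis) are sound points the paper leaves tacit.
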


\begin{proof}
Since $\mcC$ is abelian, we can decompose $f:V \to V$ as $f=\iota \circ f|$ where $f|:V \to \mathrm{Im}(f)$ is surjective and $\iota:\mathrm{Im}(f) \hookrightarrow V$ is injective. Since $f$ is transparent, it follows from Lemma \ref{trans} that $\mathrm{Im}(f)$ is transparent and therefore lies in the M\"{u}ger center of $\mathcal{C}_0$, so $f$ factors through the M\"{u}ger center. If the M\"{u}ger center of $\mathcal{C}_0$ is $Z$-trivial, then
\[ \mathrm{Im}(f) \cong \bigoplus\limits_{k \in Z} c_k^f \sigma(k)\]
for some non-negative integers $c_k^f$. It then follows that 
\[ f|=\sum\limits_{i=1}^m g_i, \qquad \iota=\sum\limits_{i=1}^m h_i\]
where $g_i \in \Hom_{\mcC}(V,\sigma(k_i))$, $h_i \in \Hom_{\mcC}(\sigma(k_i),V)$, and $m=\sum\limits_{k \in Z} c_k^f$, proving the first statement. For any $V \in \mcC$, we have 
\begin{equation}\label{eqhom}
 \mathrm{Hom}(\sigma(k),V \otimes V^*) \cong \mathrm{Hom}(\sigma(k) \otimes V,V) \cong \mathrm{Hom}(V \otimes V^*, \sigma(-k))
\end{equation}
for all $k \in Z$, where we have used the fact that the dual of $\sigma(k)$ is $\sigma(-k)$. Recall from Definition \ref{free} that for any simple module $V$, $V \otimes \sigma(k) \cong V$ if and only if $k=0$. Since we now assume $V$ is simple, it therefore follows from equation \eqref{eqhom} that $\mathrm{Im}(f) \cong \mathds{1} \subset V \otimes V^*$, so $f$ factors through the unit object
\[ f:V \otimes V^* \overset{f|}{\twoheadrightarrow} \mathds{1} \overset{\iota}{\hookrightarrow} V \otimes V^* \]
It also follows from equation \eqref{eqhom} with $k=0$ that $\mathrm{Hom}(V \otimes V^*,\mathds{1})$ and $\mathrm{Hom}(\mathds{1},V \otimes V^*)$ are both one-dimensional, so we have
\[ f=\lambda(\overrightarrow{\mathrm{coev}}_V \circ \overleftarrow{\mathrm{ev}}_V)\]
for some $\lambda \in \mathbb{C}$.
\end{proof}

For $g,h\in \Gr \setminus \XX$ and $i,j\in I_g$, let
$\Phi_{ij,g}^h:V_i \otimes V_j^* \to V_i \otimes V_j^*$ denote the
morphism represented in the left hand side of the relative modular
condition given in Equation \eqref{eq:mod}:
$$\Phi_{ij,g}^h=\qd(V_i)\Phi_{\Omega_h,V_i\otimes V_j^*}=\qd(V_i)\epsh{relative_modularity-l}{8ex}$$\\

\begin{theorem}\label{degenmod}
  Let $\mcC$ be a relative pre-modular category.
\begin{enumerate}
\item If $\mcC$ is relative modular, then $S_g$ is non-degenerate for some generic index $g \in \Gr \setminus \XX$.
\item If $\mcC$ is unimodular and $S_g$ is non-degenerate for some generic index $g \in \Gr \setminus \XX$ and $\cat_0$ has $\Zt$-trivial Müger center, then $\mcC$ is a relative modular category.
\end{enumerate}
\end{theorem}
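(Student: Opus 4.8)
For part (1), the plan is to argue contrapositively through the structure of the Reshetikhin--Turaev--type computations already set up. Assuming $\mcC$ is relative modular, I would take the defining relative modularity relation in Equation \eqref{eq:mod} and apply the modified trace to both sides: the morphism $\Phi_{ij,g}^h$ closes up to a scalar multiple of $S_{ij}$-type quantities, while the right-hand side of \eqref{eq:mod} produces (up to the nonzero parameter $\zeta_\Omega$ and modified dimensions) a delta-function in the $\Zt$-orbit indices. Concretely, the relative modularity condition says that pairing $V_i\otimes V_j^*$ with the Kirby color $\Omega_h$ detects exactly the $\Zt$-orbit of $V_i$ versus $V_j$; expanding $\Phi_{\Omega_h, V_i\otimes V_j^*}$ using $\Phi_{U\otimes U',V}=\Phi_{U,V}\circ\Phi_{U',V}$ and $\Phi_{V_a,V_j^*}$ (which is governed by $S'_{a,j^*}$), one obtains a relation expressing a product involving the matrix $S_h$ (or $S_{h,g}$) as something with a nonzero determinant. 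Since the left side factors through $S_g$ after using the fusion/semisimplicity on $\cat_g$, this forces $S_g$ (equivalently, by Lemma \ref{L:Snon-degS'}, $S'_g$) to be invertible for the relevant generic $g$. So the key step is: \emph{relative modularity $\Rightarrow$ the columns of $S_h$ span enough to separate the $\Zt$-orbits, hence nondegeneracy for a generic index.}

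For part (2), assume $\mcC$ is unimodular, $S_g$ is nondegenerate for a generic $g$, and $\cat_0$ has $\Zt$-trivial Müger center; I must produce the modularity parameter $\zeta_\Omega$ and verify Equation \eqref{eq:mod}. The approach is: consider the endomorphism $\Phi_{ij,g}^h = \qd(V_i)\Phi_{\Omega_h, V_i\otimes V_j^*}$ of $V_i\otimes V_j^*$. First I would show this endomorphism is \emph{transparent in $\cat_0$} — the Kirby color $\Omega_h$ has the property that encircling it makes strands transparent (this is exactly the content of Lemma 5.10 of \cite{CGP14} / the handle-slide property), so $\Phi_{ij,g}^h$ commutes with all braidings. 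Then apply Lemma \ref{factor}(2): since $V_i\otimes V_j^*$ (with $V_i, V_j$ simple of the same degree $g$) has a transparent endomorphism and the Müger center of $\cat_0$ is $\Zt$-trivial, we get $\Phi_{ij,g}^h = \lambda_{ij}^{gh}\,(\overrightarrow{\mathrm{coev}}_{V_j}\circ\overleftarrow{\mathrm{ev}}_{V_i})$ — wait, more precisely $f=\lambda(\overrightarrow{\mathrm{coev}}_{V_i}\circ\overleftarrow{\mathrm{ev}}_{V_i})$ when $V_i\cong V_j$ and $\Phi_{ij,g}^h=0$ otherwise (since $\Hom(V_i\otimes V_j^*,\unit)=0$ when $V_i,V_j$ are in different $\Zt$-orbits, using simplicity and property (5) of the free realization). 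This already gives the \emph{form} of Equation \eqref{eq:mod}; it remains to show the scalar $\lambda_{ii}^{gh}$ is a single nonzero constant $\zeta_\Omega$ independent of $i,g,h$.

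To pin down the scalar and its nonvanishing, I would take the modified trace of the identity $\Phi_{ij,g}^h = \lambda_{ij}^{gh}(\overrightarrow{\mathrm{coev}}_{V_i}\circ\overleftarrow{\mathrm{ev}}_{V_i})\delta_{[i],[j]}$. The left side gives $\qd(V_i)\sum_{k\in I_h}\qd(V_k)S'_{k,(i\otimes j^*)}$-type sums which, via the fusion rule $V_i\otimes V_j^* = \bigoplus \cdots$ on $\cat_0$ and $\Phi_{V_k, V_i\otimes V_j^*}=\Phi_{V_k,V_i}\circ\Phi_{V_k,V_j^*}$ together with the nondegeneracy of $S_g$, reduces to an expression in the entries of $S_h$ and $S_{h,g}$; the right side gives $\lambda\cdot\qd(V_i)$ (up to sign). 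Comparing, and using Lemma \ref{L:rk-S-mixed}-style constancy arguments plus unimodularity (which is what guarantees $\qd$ behaves multiplicatively the way one needs and that the relevant sums don't vanish), shows $\lambda_{ij}^{gh}$ is a nonzero constant. The independence of $h$ should follow because $\Omega_h$ for any two generic $h$ are related by handle-slides / the fact already noted that $\Delta_\pm$ don't depend on the index; independence of $i,g$ follows from the analogous fusion argument applied across degrees. \textbf{The main obstacle} I anticipate is step (2): carefully extracting the scalar and proving it is nonzero, since this is where unimodularity must be used in an essential way (to rule out degenerate modified dimensions spoiling the trace computation) and where one must show the various a priori different scalars $\lambda_{ij}^{gh}$ coincide — the transparency-plus-$\Zt$-triviality argument gives the shape of \eqref{eq:mod} cheaply, but the uniformity and invertibility of $\zeta_\Omega$ is the delicate part.
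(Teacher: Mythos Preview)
Your outline for part (1) and for the first two steps of part (2) --- transparency of $\Phi_{ij,g}^h$ via handle-slide, then applying Lemma \ref{factor} to get $\Phi_{ij,g}^h=0$ for $i\neq j$ and $\Phi_{ii,g}^h=\lambda_{i}^{g,h}(\overrightarrow{\mathrm{coev}}_{V_i}\circ\overleftarrow{\mathrm{ev}}_{V_i})$ --- is essentially the paper's argument. (One correction: the multiplicativity is $\Phi_{U\otimes U',V}=\Phi_{U,V}\circ\Phi_{U',V}$ in the \emph{first} slot; your formula $\Phi_{V_k,V_i\otimes V_j^*}=\Phi_{V_k,V_i}\circ\Phi_{V_k,V_j^*}$ is not even well-typed, since the two factors are endomorphisms of different objects.)

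The genuine gap is exactly where you flagged it: the \emph{independence} of the scalar $\lambda_i^{g,h}$ from $i,g,h$. Taking the m-trace of both sides as you propose yields $\lambda_i^{g,h}=\sum_{k\in I_h}\qd(V_k)S'_{k,i}S'_{i^*,k}$, which \emph{a priori} depends on $i$ and $h$; invoking ``Lemma \ref{L:rk-S-mixed}-style constancy'' does not obviously make that dependence disappear, and your description of unimodularity as making ``$\qd$ behave multiplicatively'' is not what is needed. The paper's mechanism is different and more structural: one decomposes $V_i\otimes V_i^*\cong\bigoplus_k P_k$ into indecomposable projectives, notes that the projective cover $P_0$ of $\unit$ appears as a summand, and shows (again via transparency and Lemma \ref{factor}) that the restriction $S'_{P_k}$ of the Kirby-encircling map to each $P_k$ factors through the free realisation. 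Since $\dim\Hom(\sigma(k),V_i\otimes V_i^*)=\delta_{k,0}$, and \emph{unimodularity} is used precisely to guarantee $\dim\Hom(\unit,P_0)=1$, one concludes $S'_{P_k}=0$ for $k\neq0$. The m-trace computation then collapses to $\lambda_i^{g,h}=\mt_{P_0}(S'_{P_0})$, an expression visibly independent of $i$ and $g$ (and a final partial-trace argument identifies it with $\Delta_+\Delta_-$, handling $h$-independence and nonvanishing simultaneously). Without this projective-cover step, your approach does not yet pin the scalar down.
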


\begin{proof} First, suppose $\mathcal{C}$ is relative modular. By taking the $m$-trace of both sides of Equation \eqref{eq:mod} with $V_i=V_j$ and $\Omega_h=\sum\limits_{k \in I_h} d(V_k)V_k$, we find that
  \[ \sum\limits_k d(V_k)d(V_i)S'_{k,i}S'_{i^*,k}=\zeta_{\Omega} \not
    = 0. \] This means that the product of matrices
  $\frac1\zeta_{\Omega}S_{g,h}S_{h,-g}$ is the identity matrix.  In
  particular for $g=h$, these square matrices $S_g$ and $S_{g,-g}$ are
  non degenerate.


  We now prove the second statement.  Let $g,h\in \Gr \setminus
  \XX$. We proceed by proving the following three statements:
\begin{enumerate}
\item The morphism $\Phi_{ij,g}^h$ is transparent in $\mcC_0$ for all $i,j\in I_g$. 
\item If $S_{g}$ is non-degenerate for some $g \in \Gr \setminus \XX$, then $\Phi_{ij,g}^h \not = 0$ if and only if $i=j$. Moreover, for $i\in I_g$ there exists $\zeta^h_{i,g}\in \kk$ such that $\Phi_{ii,g}^h=\zeta^h_{i,g}(\overleftarrow{\mathrm{coev}}_{V_i} \circ \overrightarrow{\mathrm{ev}}_{V_i})$.
\item The scalar  $\zeta^h_{i,g}\in \kk$ equals $\Delta_+\Delta_-$ and does not depend on $g,h \in \Gr \setminus \XX$ nor $i \in I_g$. 
\end{enumerate}
It is then clear that $\cat$ satisfies Definition \ref{D:2}.\\

1. Let $V_i,V_j \in \mcC_g$ so $\Phi_{ij,g}^h:V_i \otimes V_j^* \to V_i \otimes V_j^*$ is an edomorphism in $\mcC_0$. The proof of Lemma 5.9 of \cite{CGP14} shows that in any pre-relative modular category the handle slide property holds (note this lemma is proved in the context of non-degenerate pre-relative modular categories but the non-degeneracy condition is not used in the proof).  Let $W$ be an object in $\mcC_0$ and consider the morphism $ \Id_W \otimes \Phi_{ij,g}^h$ and corresponding diagram which is just the diagram on the left side of Equation \eqref{eq:mod} with a disjoint string labeled with $W$ on the left.   The handle slide property says we can slide the component labeled with $W$ over the component labeled with the Kirby color.  Doing this we obtain the equality
$$
 \Id_W \otimes \Phi_{ij,g}^h = c_{V_i\otimes V_j,W} \circ  ( \Phi_{ij,g}^h \otimes  \Id_W )  \circ  c_{W, V_i\otimes V_j}.
$$
An analogous argument shows $\Phi_{ij,g}^h \otimes  \Id_W=  c_{W, V_i\otimes V_j} \circ(\Id_W \otimes \Phi_{ij,g}^h ) \circ c_{V_i\otimes V_j,W}$.   Thus, $ \Phi_{ij,g}^h$ is transparent in $\mcC_0$.  \\

2. Since the M\"{u}ger center of $\mathcal{C}_0$ is $Z$-trivial, it follows from transparency of $\Phi_{ij,g}^h$ and Lemma \ref{factor} that 
\[ \Phi_{ij,g}^h=\sum\limits_{s=1}^n h^{ij}_s \circ g^{ij}_s\]
for some $h_s^{ij} \in \Hom_{\mcC}(V_i \otimes V_j^*,\sigma(k_s))$ and $g_s^{ij} \in \Hom_{\mcC}(\sigma(k_s),V_i \otimes V_j^*)$. Recall from Definition \ref{free} that $\mathrm{Hom}(V_i,\sigma(k) \otimes V_j)=0$ unless $i=j$ and $k=0$, so we must have $\Phi_{ij,g}^h=0$ if $i \not = j$. Now, let $X$ be the non-zero matrix defined by
\[ X^T=(S'_{1,i^*},...,S'_{N,i^*}) \]
Then, by non-degeneracy of $S_g$, $S_gX$ is a non-zero vector. However, $(S_gX)_j=\sum\limits_{k \in I_g} \qd(V_k)S'_{j,k}S'_{k,i^*}=\Phi_{ji,g}^h$ and $\Phi_{ji,g}^h=0$ for all $j \not =i$. Therefore, $\Phi_{ii,g}^h \not= 0$. Finally, since the Müger center of $\cat_0$ is trivial, Lemma \ref{factor} together with (1) implies the existence of $\zeta^h_{i,g}\in \kk$.

3. We have $V_i \in \cat_g$ is generic, so
$V_i \otimes V_i^*\cong \oplus_{k\in K} P_k$ where $K$ is an indexing
set and $P_k$ are indecomposable projective objects. Since the
evaluation and coevaluation give non-zero maps between the unit object
$\unit$ and $V_i \otimes V_i^*$ we have that this direct sum contains
the projective cover $P_0$ of the unit object $\unit$.  For $k\in K$
let $S'_{P_k}:P_k\to P_k$ be the morphism corresponding to the diagram
\begin{equation*}
  \epsh{Sprime}{12ex}\put(-8,-10){\ms{\Omega_h}}\put(-40,24){\ms{P_k}}.
\end{equation*} 
It follows again from \cite[Lemma 5.9]{CGP14} that $S'_{P_k}$ is
transparent for all $k \in K$. Since $S'_{P_k}$ is transparent it
factors through the translation group by Lemma \ref{factor}. However,
by Equation \eqref{eqhom} and Definition \ref{free} (part 5), we have
$\dim \mathrm{Hom}(\sigma(k), V_i^* \otimes V_i^*)=\delta_{k,0}$ and
we know that $\dim \mathrm{Hom}(\sigma(0),P_0) = 1$ since $\cat$ is unimodular. Therefore, $S'_{P_k}=0$ for all $k \not = 0$. To compute
the scalar $\zeta^h_{i,g}$ we use the m-trace:
\begin{align*}
\label{}
  \qd(V_i)\zeta^h_{i,g}&= \zeta^h_{i,g}\mt_{V_i\otimes V_i^*}(\overleftarrow{\mathrm{coev}}_{V_i} \circ \overrightarrow{\mathrm{ev}}_{V_i})  \\
    &  =\mt_{V_i\otimes V_i^*}(\Phi_{ii,g}^h)\\
    &=\sum_{k\in K}\mt_{V_i\otimes V_i^*}\left((\imath_k\circ p_k)\circ\Phi_{ii,g}\right)\\
&= \sum_{k\in K}\mt_{V_i\otimes V_i^*}\left(\imath_k \circ  \qd(V_i) S'_{P_k}\circ p_k \right)\\
&= \qd(V_i)\mt_{V_i\otimes V_i^*}\left(\imath_0 \circ S'_{P_0}\circ p_0 \right)\\
&= \qd(V_i)\mt_{P_0}\left(S'_{P_0} \right)
\end{align*}
where $p_k: V_i\otimes V_i^*\to P_k$ and $\imath_k: P_k\to V_i\otimes V_i^*$ are projections and injections representing the direct sum.  Thus, $\zeta^h_{i,g}=\mt_{P_0}\left(S'_{P_0} \right)$ and does not depend on $g,h$ nor $i$. Let $\zeta=\mt_{P_0}\left(S'_{P_0} \right)$.  Then we can compute
in two ways the left partial trace of $\sum_i\theta_i\Phi_{ij,g}^h$:
on one side it gives
$\ptr_L(\sum_i\theta_i\Phi_{ij,g}^h)=\zeta\theta_j\Id_{V_j}$ and on
the other side,
$\ptr_L(\sum_i\theta_i\Phi_{ij,g}^h)=\Delta_+\Delta_-\theta_j\Id_{V_j}$
so $\zeta=\Delta_+\Delta_-$.  
\end{proof}

This theorem generalizes \cite[Proposition 3.36]{AGP} and of course applies to the category $\mathcal{D}$ of weight modules for $U_{\xi}^H\mathfrak{sl}(m|n)$ at odd root of unity $\xi$. The only condition of the Theorem which isn't explicitly shown in \cite{AGP} is $Z$-triviality of the M\"{u}ger center of $\mathcal{C}_0$, which appears implicitly in the proof of the factorization property \cite[Lemma 3.35]{AGP}. This property is not difficult to show directly, and follows from the fact that lying in the M\"{u}ger center places strong restrictions on which weights spaces can be non-empty. We expect that Theorem \ref{degenmod} should apply to the other unrolled super quantum groups as well. We also have the following corollary of the proof of the previous theorem:
\begin{corollary}\label{C:DeltaDelta}
If $\mathcal{C}$ is a relative modular category then $\zeta_{\Omega}\ = \Delta_+\Delta_-$ (recall Definition \ref{d:ndeg} \& \ref{D:2}).
\end{corollary}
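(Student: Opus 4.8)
The plan is to extract this directly from the computation already carried out in the proof of Theorem~\ref{degenmod}, without redoing the bulk of the work. The key observation is that part~(1) of that proof, establishing the forward implication ``relative modular $\Rightarrow$ $S_g$ nondegenerate,'' was proved by taking the $m$-trace of both sides of the relative modularity equation \eqref{eq:mod} with $V_i = V_j$, which yields the scalar identity
\[
  \sum_{k} d(V_k)\,d(V_i)\,S'_{k,i}\,S'_{i^*,k} \;=\; \zeta_\Omega .
\]
On the other hand, part~(3) of the proof of Theorem~\ref{degenmod} analyzed exactly the morphism $\Phi^h_{ii,g}$ (with $h=g$), showed $\Phi^h_{ii,g} = \zeta^h_{i,g}\,(\overleftarrow{\mathrm{coev}}_{V_i}\circ\overrightarrow{\mathrm{ev}}_{V_i})$, and identified the scalar $\zeta^h_{i,g}$ with $\Delta_+\Delta_-$ via the partial-trace computation. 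So the corollary will follow once I reconcile these two descriptions of the same scalar.

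Concretely, the steps are: first, note that in a relative modular category the hypotheses of Theorem~\ref{degenmod}(2) are available — in particular $S_g$ is nondegenerate (by part~(1) of that theorem's proof) — so the conclusions of its proof apply and give $\zeta^h_{i,g} = \Delta_+\Delta_-$ for any generic $g$ and $i\in I_g$, with $h=g$. Second, take the $m$-trace of the identity $\Phi^g_{ii,g} = \zeta^g_{i,g}(\overleftarrow{\mathrm{coev}}_{V_i}\circ\overrightarrow{\mathrm{ev}}_{V_i})$: the left side is $\mt_{V_i\otimes V_i^*}(\Phi^g_{ii,g})$, which by definition of $\Phi^h_{ij,g} = \qd(V_i)\,\Phi_{\Omega_h, V_i\otimes V_j^*}$ and the defining property of $S_{ij}$ unwinds to $\qd(V_i)\sum_k \qd(V_k) S'_{k,i}S'_{i^*,k}$; the right side is $\zeta^g_{i,g}\,\qd(V_i)$. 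Comparing, $\sum_k \qd(V_k)\,\qd(V_i)\,S'_{k,i}S'_{i^*,k} = \zeta^g_{i,g}\,\qd(V_i)$ after pulling out the $\qd(V_i)$ that multiplies the sum — but this is precisely the quantity shown equal to $\zeta_\Omega$ in part~(1). Hence $\zeta_\Omega = \zeta^g_{i,g} = \Delta_+\Delta_-$.

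The only mild subtlety — and the one step worth stating carefully rather than routine — is bookkeeping the factors of $\qd(V_i)$: the definition $\Phi^h_{ij,g} = \qd(V_i)\Phi_{\Omega_h,V_i\otimes V_j^*}$ carries one factor of $\qd(V_i)$, taking the $m$-trace of the identity morphism $\overleftarrow{\mathrm{coev}}_{V_i}\circ\overrightarrow{\mathrm{ev}}_{V_i}$ on $V_i\otimes V_i^*$ produces another $\qd(V_i)$, and the sum $\sum_k \qd(V_k)S'_{k,i}S'_{i^*,k}$ already appeared with a spurious $d(V_i)$ in part~(1)'s display; one must check these match so that dividing out the common nonzero factor $\qd(V_i)$ is legitimate. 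Since $V_i$ is generic and regular, $\qd(V_i)\neq 0$, so the cancellation is valid. Therefore the whole argument is a half-paragraph: the corollary records that the modularity parameter $\zeta_\Omega$, the scalar $\zeta^h_{i,g}$, and the product $\Delta_+\Delta_-$ are three names for the same number, each identification being a line already present in the proof of Theorem~\ref{degenmod}.

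\begin{proof}
Suppose $\mcC$ is relative modular and fix a generic index $g\in\Gr\setminus\XX$ and $i\in I_g$. By part~(1) of the proof of Theorem~\ref{degenmod}, $S_g$ is nondegenerate, so the analysis in parts~(2) and~(3) of that proof applies with $h=g$: we have $\Phi^{g}_{ii,g}=\zeta^{g}_{i,g}\,(\overleftarrow{\mathrm{coev}}_{V_i}\circ\overrightarrow{\mathrm{ev}}_{V_i})$ and $\zeta^{g}_{i,g}=\Delta_+\Delta_-$. Taking the $m$-trace of this identity and using $\mt_{V_i\otimes V_i^*}(\overleftarrow{\mathrm{coev}}_{V_i}\circ\overrightarrow{\mathrm{ev}}_{V_i})=\qd(V_i)$ together with the definition $\Phi^{g}_{ii,g}=\qd(V_i)\Phi_{\Omega_g,V_i\otimes V_i^*}$ and $\Omega_g=\sum_{k\in I_g}\qd(V_k)V_k$, we obtain
\[
  \qd(V_i)\sum_{k\in I_g}\qd(V_k)\,S'_{k,i}\,S'_{i^*,k}\;=\;\qd(V_i)\,\zeta^{g}_{i,g}.
\]
On the other hand, taking the $m$-trace of Equation~\eqref{eq:mod} with $V_i=V_j$ and $h=g$ gives, as in part~(1) of the proof of Theorem~\ref{degenmod}, $\qd(V_i)\sum_{k\in I_g}\qd(V_k)\,S'_{k,i}\,S'_{i^*,k}=\zeta_\Omega$. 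Since $V_i$ is regular and generic, $\qd(V_i)\neq 0$, so comparing the two expressions yields $\zeta_\Omega=\qd(V_i)\,\zeta^{g}_{i,g}/\qd(V_i)=\zeta^{g}_{i,g}=\Delta_+\Delta_-$.
\end{proof}
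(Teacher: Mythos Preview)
There is a genuine gap in your argument. You assert that ``in a relative modular category the hypotheses of Theorem~\ref{degenmod}(2) are available'' and hence that ``the analysis in parts~(2) and~(3) of that proof applies.'' But the hypotheses of Theorem~\ref{degenmod}(2) are that $\mcC$ is \emph{unimodular} and that $\cat_0$ has \emph{$\Zt$-trivial M\"uger center}, in addition to non-degeneracy of $S_g$. Only the last of these follows from relative modularity (via part~(1)); the other two are not part of Definition~\ref{D:2} and are used essentially in steps~(2) and in the first half of step~(3) (Lemma~\ref{factor} needs the M\"uger hypothesis, and the identification $\zeta^h_{i,g}=\mt_{P_0}(S'_{P_0})$ uses unimodularity). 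So you cannot simply import those steps.

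The fix is that you do not need them. If $\mcC$ is relative modular then Equation~\eqref{eq:mod} holds \emph{by definition}, and since its left-hand side is exactly $\Phi^h_{ij,g}$, this already says $\Phi^h_{ij,g}=\zeta_\Omega\,\delta_{ij}(\overrightarrow{\mathrm{coev}}_{V_i}\circ\overleftarrow{\mathrm{ev}}_{V_i})$ with the constant $\zeta_\Omega$ independent of $i,g,h$. You therefore bypass steps~(1), (2), and the first half of~(3) entirely. What remains is only the \emph{final} partial-trace identity at the end of step~(3): computing $\ptr_L\bp{\sum_i\theta_i\Phi^h_{ij,g}}$ two ways gives $\zeta_\Omega\theta_j\Id_{V_j}$ on one side and $\Delta_+\Delta_-\theta_j\Id_{V_j}$ on the other. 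That computation uses only the form of $\Phi^h_{ij,g}$ just stated and the definition of $\Delta_\pm$ via the Kirby color and twist --- no unimodularity, no M\"uger-center condition. This is precisely how the paper intends the corollary to follow ``from the proof.''

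A secondary remark: your comparison of the two $m$-trace computations is tautological. The equation $\Phi^g_{ii,g}=\zeta^g_{i,g}(\overrightarrow{\mathrm{coev}}_{V_i}\circ\overleftarrow{\mathrm{ev}}_{V_i})$ (from your invocation of steps~(2)--(3)) and Equation~\eqref{eq:mod} with $i=j$ are the \emph{same} equation, so taking $m$-traces of each and comparing tells you nothing new; it only restates $\zeta^g_{i,g}=\zeta_\Omega$, which is immediate once you recognize the left side of \eqref{eq:mod} as $\Phi^h_{ij,g}$.
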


\section{Generic Semisimplicity}\label{gens}

In this section we consider categorical constructions mirroring those used in \cite{AGP} to construct a pre-relative modular category from the category of perturbative modules for $U_{\xi}^H\mathfrak{sl}(m|n)$ at root of unity $\xi$. We derive properties of these categories which will be useful for proving relative modularity in various examples. First, we need to recall the notion of negligibility for objects.

\begin{definition}
Let $\mathcal{C}$ be a category admitting a modified trace $\{\mt_V\}_{V \in \mathcal{C}}$ (recall Definition \ref{mtrace}). We call an object $V \in \mathcal{C}$ negligible if $\mt_V(f)=0$ for all $f \in \mathrm{End}(V)$.
\end{definition}

Negligible objects form an ideal \cite[Lemma 5.6]{AGP} which we denote by $\mathcal{N}(\mathcal{C})$.

\begin{definition}\label{D:WS}
An object $X \in \mcC$ is called weak semisimple if its image $\overline{X} \in \mcC/\mcN(\mcC)$ is semisimple. A weak semisimple object has strong decomposition if 
\[ X \cong S \oplus N\]
where $S$ is semisimple and non-negligible while $N$ is negligible.
\end{definition}

Suppose $\tilde{X} \cong S \oplus N$ has strong decomposition and $X \cong \bigoplus_{k} X_k$ is a retract of $\tilde{X}$ with $X_k$ its indecomposable factors. Then we have morphisms 
\[r:\tilde{X} \to X \qquad \mathrm{and} \qquad t:X \to \tilde{X}\]
 such that $r \circ t = \mathrm{Id}_X$. If $t_k:=t|_{X_k}$, we have $r \circ t_k=\mathrm{Id}_{X_k}$ so $X_k$ is a retract of $\tilde{X}$. Since $X_k$ is indecomposable, we have $\mathrm{Im}(t_k) \subset S \text{ or } N$, so by restricting $r$ to $S$ or $N$ as appropriate, we see that each indecomposable factor $X_k$ is a retract of either $S$ or $N$. Since $S$ is semisimple, any indecomposable retract of $S$ will be simple, and any retract of $N$ is negligible by \cite[Lemma 5.6]{AGP}. It is also clear that direct sums of objects with strong decompositions have strong decomposition. We therefore have the following lemma:

\begin{lemma}\label{ret}
Direct sums and retracts of objects with strong decomposition have strong decomposition.
\end{lemma}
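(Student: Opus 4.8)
The plan is to make precise the bookkeeping already sketched in the paragraph preceding the statement, treating the two assertions separately.

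For direct sums, suppose $X \cong S_X \oplus N_X$ and $Y \cong S_Y \oplus N_Y$ are strong decompositions. I would simply regroup: $X \oplus Y \cong (S_X \oplus S_Y) \oplus (N_X \oplus N_Y)$, observe that a finite direct sum of semisimple objects is semisimple, that $N_X \oplus N_Y \in \mcN(\mcC)$ since $\mcN(\mcC)$ is an ideal (hence closed under finite direct sums), and that $S_X \oplus S_Y$ is non-negligible because it has the non-negligible direct summand $S_X$ — a retract of a negligible object being negligible by \cite[Lemma 5.6]{AGP}. Iterating gives the claim for all finite direct sums.

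For retracts, let $X$ be a retract of $\tilde X$ with $\tilde X \cong S \oplus N$ a strong decomposition, and write $X \cong \bigoplus_k X_k$ with the $X_k$ indecomposable. First I would note that composing the split inclusion and projection of each $X_k$ in $X$ with those of $X$ in $\tilde X$ exhibits each $X_k$ as a retract of $S \oplus N$, i.e. as a direct summand of $S \oplus N$. Since $X_k$ is indecomposable, it is a direct summand of $S$ or of $N$: expanding the two splitting maps in their $S$- and $N$-components, their composite $\Id_{X_k}$ is a sum of an endomorphism of $X_k$ factoring through $S$ and one factoring through $N$, and since $\End(X_k)$ is local one of these is an isomorphism, which splits $X_k$ off the corresponding factor. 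If $X_k$ is a retract of $S$ it is a retract of a semisimple object, hence semisimple, hence (being indecomposable) simple; if $X_k$ is a retract of $N$ it is negligible by \cite[Lemma 5.6]{AGP}. Since a simple object is either negligible or not, every $X_k$ falls into exactly one of these two classes, and collecting the non-negligible simple $X_k$ into $S'$ and the negligible $X_k$ into $N'$ yields $X \cong S' \oplus N'$ with $S'$ semisimple non-negligible and $N'$ negligible, which is the required strong decomposition. (That $X$ is in particular weak semisimple is automatic, since $\overline X$ is a retract of $\overline{\tilde X} \cong \overline S$, a semisimple object.)

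The only genuinely delicate step is the assertion that an indecomposable retract of $S \oplus N$ is a retract of $S$ or of $N$, which uses that $\End$ of an indecomposable object is local, i.e. that $\mcC$ is Krull–Schmidt; this is already implicit in writing $X \cong \bigoplus_k X_k$ into indecomposables, and holds for the module categories to which the lemma is applied. Everything else is formal manipulation with the ideal properties of $\mcN(\mcC)$ and the fact that retracts of semisimple objects are semisimple, so I expect no further obstacle.
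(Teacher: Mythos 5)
Your proof is correct and follows essentially the same route the paper sketches in the paragraph preceding the lemma: decompose $X$ into indecomposables, show each lands as a retract in $S$ or in $N$, then regroup. The one genuine improvement is that you make explicit the Krull--Schmidt/local-endomorphism-ring argument for why an indecomposable retract of $S \oplus N$ must be a retract of $S$ or of $N$; the paper asserts $\mathrm{Im}(t_k) \subset S$ or $N$ without justification (and this is not quite accurate as phrased, since the image of $t_k$ could sit diagonally even when $X_k$ is abstractly a summand of $S$), so your version is the more careful one.
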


\begin{definition}\label{D:CS}
Let $\mcC$ be linear ribbon category. Given any subset $S \subset \mathrm{Ob}(\mcC)$, let $\mcC^S$ be the full subcategory of $\mcC$ generated by objects in $S \cup S^*$ with respect to tensor products, direct sums, and retracts where $S^*:=\{ X^* \; | \; X \in S \}$.
\end{definition}

Recall that if a $\Gr$-graded category $\mcC$ has a small symmetric subset $\XX \subset \Gr$ (recall Definition \ref{smallsymm}), then we call the objects in $\mcC_g$ with $g \in \Gr \setminus \XX$ generic. For any $S \subset \mathrm{Ob}(\mcC)$, $\mcC^S$ is a full subcategory of $\mcC$, so $\mcC^S$ inherits the grading of $\mcC$ in the obvious way.

\begin{theorem}\label{sd}
Let $\mcC$ be a linear ribbon category and $S \subset \mathrm{Ob}(\mcC)$. Then,
\begin{enumerate}
\item[1)] If all objects in $S$ and all tensor products of objects in $S$ have strong decomposition, then all objects in $\cat^S$ have strong decomposition.
\item[2)] Suppose $\mcC$ is graded by a group $G$ with small symmetric subset $\XX$. If all generic objects in $S$ and all generic tensor products of objects in $S$ have strong decomposition, then all generic objects in $\mcC^S$ have strong decomposition.
\end{enumerate}
\end{theorem}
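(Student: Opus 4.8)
The plan is to prove that every object of $\mcC^{S}$ has strong decomposition by exhibiting every such object as a retract of a direct sum of tensor products that the hypothesis controls. By Lemma~\ref{ret}, the class of objects with strong decomposition is closed under finite direct sums and retracts, and by Definition~\ref{D:CS} every object of $\mcC^{S}$ is a retract of a finite direct sum of tensor products of objects in $S\cup S^{*}$ (the class of such retracts is already closed under $\otimes$, $\oplus$ and retracts, hence coincides with $\mcC^{S}$). So it suffices to show that every tensor product $Z_{1}\otimes\cdots\otimes Z_{n}$ with each $Z_{i}\in S\cup S^{*}$ has strong decomposition; for part (2) this is needed only when the tensor product is generic.

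The first step is a duality lemma: if $X$ has strong decomposition, so does $X^{*}$. The dual of a simple object is simple (its endomorphism algebra is again $\kk$), the dual of a negligible object is negligible (negligibility is stable under duality in a ribbon category, as follows from the defining properties of the modified trace together with \cite[Lemma~5.6]{AGP}), and the dual of a semisimple object is semisimple; applying $(-)^{*}$ to a strong decomposition $X\cong S\oplus N$ then yields one for $X^{*}$. In particular any tensor product of objects in $S^{*}$ is the dual (taken in reverse order) of a tensor product of objects in $S$, hence has strong decomposition by hypothesis.

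Next, using the braiding to permute tensor factors, every $Z_{1}\otimes\cdots\otimes Z_{n}$ as above is isomorphic to $A\otimes B^{*}$ with $A$ and $B$ tensor products of objects in $S$. By hypothesis $A$, $B$ and $A\otimes B$ all have strong decomposition, and so does $B^{*}$ by the duality lemma. Writing $A\cong S_{A}\oplus N_{A}$ and $B^{*}\cong S_{B^{*}}\oplus N_{B^{*}}$ and using that the negligible ideal absorbs tensor products (Definition~\ref{mtrace}), we obtain $A\otimes B^{*}\cong(S_{A}\otimes S_{B^{*}})\oplus N$ with $N$ negligible, so by Lemma~\ref{ret} it remains to treat $S_{A}\otimes S_{B^{*}}$; since $S_{A}$ and $S_{B}$ are semisimple this reduces to treating $U\otimes W^{*}$ where $U$ is a simple non-negligible retract of $A$ and $W$ a simple non-negligible retract of $B$. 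The hypothesis enters once more here: $A\otimes B$ is a tensor product of objects in $S$, hence has strong decomposition, hence so does its retract $U\otimes W$.

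The main obstacle, and the step I expect to require the most care, is precisely the passage from the ``unmixed'' product $U\otimes W$ to the ``mixed'' product $U\otimes W^{*}$: one must show that $U\otimes W^{*}$ is, after all, a retract of a tensor product of objects in $S$ (equivalently, that the dual $W^{*}$ of a simple non-negligible summand coming from $S$ again appears as a retract of a tensor product of objects in $S$). I expect this to come from a rigidity/zig-zag argument built from the evaluation and coevaluation morphisms of $W$ together with the strong decompositions already produced, after which the hypothesis and Lemma~\ref{ret} close the case. For part (2) there is the additional subtlety that a \emph{generic} tensor product of objects in $S\cup S^{*}$ may only be regrouped into tensor products $A$, $B$ of objects in $S$ whose individual degrees are non-generic, so the hypothesis need not apply to $A$ or $B$ directly; here one invokes the smallness of $\XX$ to first absorb the non-generic pieces into \emph{generic} tensor products of objects in $S$ before running the same argument, and organizes the induction on the number of tensor factors so that this enlargement terminates.
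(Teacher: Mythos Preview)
Your proposal leaves unfilled precisely the step you yourself flag as the main obstacle: the passage from $U\otimes W$ to $U\otimes W^{*}$. The zig-zag argument you hope for is not available here. In the non-semisimple ribbon categories under consideration the \emph{categorical} quantum dimension of a typical simple $W$ vanishes (this is the whole reason one introduces the modified trace), so $\overleftarrow{\mathrm{ev}}_{W}\circ\overrightarrow{\mathrm{coev}}_{W}=0$ and $\unit$ is \emph{not} a retract of $W\otimes W^{*}$ via the duality morphisms. There is then no evident way to exhibit $W^{*}$, or $U\otimes W^{*}$, as a retract of a tensor product of objects of $S$ alone, and since the hypothesis says nothing about mixed products from $S\cup S^{*}$ your reduction stalls at that point. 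The smallness manoeuvre you sketch for part~(2) is likewise left as an intention rather than an argument.

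The paper proceeds differently and more directly. It sets up a filtration $\mcC_{0}=S\subset\mcC_{1}\subset\cdots$, where $\mcC_{k}$ is generated from tensor products of objects of $\mcC_{k-1}$ under direct sums and retracts, and shows by a short induction on $k$ that every indecomposable object arising this way is already a retract of a single tensor product of objects of $S$ (the point being that a retract of a tensor product of retracts of tensor products is again a retract of a tensor product). Lemma~\ref{ret} together with the hypothesis then finishes at once. For part~(2) the only extra ingredient is that a retract of a generic object is generic, so the relevant tensor product is itself generic. This argument never separates primal from dual factors, so your obstacle simply does not arise. (That said, the paper's filtration begins at $S$ rather than $S\cup S^{*}$, so strictly speaking it treats the subcategory generated by $S$; your instinct that $S^{*}$ deserves explicit attention is sound, but completing your route would require a genuinely new idea or a strengthened hypothesis on products from $S\cup S^{*}$.)
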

\begin{proof}
To prove this statement, we introduce a filtration on $\mcC$. Let $\mcC_0=S$ and for $k \geq 1$, let $\mcC_k$ be the full subcategory of $\mcC$ generated by tensor products of objects in $\mcC_{k-1}$ with respect to direct sums and retracts. Clearly then, we have
\[ \mcC_0 \subset \mcC_1 \subset \cdots \subset \mcC_k \subset \cdots  \qquad \text{with} \qquad \mcC=\bigcup\limits_{k=1}^{\infty} \mcC_k. \]

We prove by induction on $k$ that all indecomposable objects in $\mathcal{C}$ are either in $S$, a tensor product of objects in $S$, or a retract of such objects. It will then follow from Lemma \ref{ret} and the stated assumptions that all objects in $\mathcal{C}$ have strong decomposition. The statement is clearly true for $\mathcal{C}_0$, so assume now that it is true for $\mathcal{C}_{k-1}$ and let $X \in \mathcal{C}_k \setminus \mathcal{C}_{k-1}$ be indecomposable. By construction, $X$ is either a tensor product of objects in $\mathcal{C}_{k-1}$ or a retract of such a tensor product. If $X= \bigotimes\limits_{k=1}^n Y_k$ with $Y_k \in \mathcal{C}_{k-1}$, then by the induction assumption $Y_k$ is in $S$, is a tensor product of objects in $S$, or is a retract of such objects. If $Y_k$ is in $S$ or is a tensor product of objects in $S$, let $\tilde{Y}_k=Y_k$ and if $Y_k$ is a retract of such objects, let $\tilde{Y}_k$ be the tensor product of objects in $S$ such that $Y_k$ is a retract of $\tilde{Y}_k$. Clearly then, $X$ is a retract of $Y:=\bigotimes\limits_{k=1}^n \tilde{Y}_k$ and $Y$ is a tensor product of objects in $S$. Any retract of $X$ will also be a retract of $Y$, so the induction argument is complete. It follows now from the assumption that all tensor products of objects in $S$ have strong decomposition and Lemma \ref{ret} that all object in $\mathcal{C}$ have strong decomposition.

The argument for part $(2)$ is almost identical to $(1)$. One only needs to keep in mind that the retract of an object in $\mcC_g$ is also in $\mcC_g$. That is, $X$ is generic if and only if any retract of $X$ is generic.
\end{proof}

It then follows trivially that if $S$ satisfies the consitions of Theorem \ref{sd} $(1)$, then $\overline{\mcC}^S$ is semisimple, and if $S$ satisfies the conditions of Theorem \ref{sd} $(2)$, then $\overline{\mcC}^S$ is generically semisimple. It turns out that many interesting examples have a distinguished object which makes checking the conditions of Theorem \ref{sd} easier, as described by the following corollary of Theorem \ref{sd}:

\begin{corollary}\label{Cor1}
Let $\mcC$ be a linear ribbon category and let $S^{\mathsf{v}}:=S \cup \{\mathsf{v}\}$ where $S \subset \mathrm{Ob}(\cat)$ consists of objects with strong decomposition and $\mathsf{v} \in \mathrm{Ob}(\cat)$ is a distinguished object with strong decomposition such that:
\begin{enumerate}
\item If $V \in S$ then $V \otimes \mathsf{v}^n$ has strong decomposition for all $n \in \mathbb{Z}_{\geq 0}$.
\item For any $U_1,U_2 \in S$, $U_1 \otimes U_2 \cong \bigoplus \limits_k V_k$ where $V_k$ is a retract of $\tilde{V}_k \otimes \mathsf{v}^{n_k}$ for some $\tilde{V}_k \in S$ and $n_k \in \mathbb{Z}_{\geq 0}$.
\end{enumerate}
Then, every object in $\cat^{S^{\mathsf{v}}}$ has strong decomposition.\\
\end{corollary}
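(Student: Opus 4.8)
The plan is to reduce Corollary \ref{Cor1} to Theorem \ref{sd} (1) by verifying that the set $S^{\mathsf{v}} = S \cup \{\mathsf{v}\}$ satisfies the hypothesis of that theorem, namely that every object of $S^{\mathsf{v}}$ and every tensor product of objects of $S^{\mathsf{v}}$ has strong decomposition. By assumption $\mathsf{v}$ and each $V \in S$ already have strong decomposition, so the content is in the tensor products. A general tensor product of objects in $S^{\mathsf{v}}$ has the form $W = W_1 \otimes \cdots \otimes W_r$ where each $W_i$ is either $\mathsf{v}$ or some element of $S$. First I would observe that, by commuting the $\mathsf{v}$-factors appropriately (or more honestly, by noting that the statement to be proved only concerns existence of a strong decomposition, which is insensitive to reordering up to isomorphism — and $\mcC$ being ribbon gives us the braiding isomorphisms to do this rigorously), we may assume $W \cong \mathsf{v}^{\otimes a} \otimes U_1 \otimes \cdots \otimes U_s$ with $U_1, \dots, U_s \in S$ and $a \in \mathbb{Z}_{\geq 0}$.

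Next I would induct on $s$, the number of $S$-factors. The base cases $s = 0$ (pure power $\mathsf{v}^{\otimes a}$, which has strong decomposition since $\mathsf{v}$ does and tensor powers of an object with strong decomposition have strong decomposition — this sub-claim itself needs the observation that $\mathsf{v}^{\otimes n} = \mathsf{v}^{\otimes (n-1)} \otimes \mathsf{v}$ and one applies hypothesis (1) with... actually no, hypothesis (1) is about $V \otimes \mathsf{v}^n$ for $V \in S$; for pure powers of $\mathsf{v}$ I would argue directly or absorb this into a cleaner formulation) and $s = 1$ (this is exactly hypothesis (1): $U_1 \otimes \mathsf{v}^{a}$ has strong decomposition) are immediate. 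For the inductive step with $s \geq 2$, I would isolate two of the $S$-factors, say $U_1 \otimes U_2$, apply hypothesis (2) to write $U_1 \otimes U_2 \cong \bigoplus_k V_k$ with each $V_k$ a retract of $\tilde V_k \otimes \mathsf{v}^{n_k}$, $\tilde V_k \in S$. Tensoring back with the remaining factors and distributing the direct sum, $W$ becomes a direct sum of objects each of which is a retract of $\mathsf{v}^{\otimes(a + n_k)} \otimes \tilde V_k \otimes U_3 \otimes \cdots \otimes U_s$ — a tensor product with only $s - 1$ factors from $S$ (namely $\tilde V_k, U_3, \dots, U_s$), which has strong decomposition by the inductive hypothesis. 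By Lemma \ref{ret}, retracts and direct sums of objects with strong decomposition have strong decomposition, so $W$ does too, closing the induction. This shows all tensor products of objects in $S^{\mathsf{v}}$ have strong decomposition, so Theorem \ref{sd} (1) applies to $S^{\mathsf{v}}$ and gives that every object of $\mcC^{S^{\mathsf{v}}}$ has strong decomposition.

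The main obstacle I anticipate is bookkeeping rather than any deep difficulty: one must be careful that when hypothesis (2) is invoked and the $\mathsf{v}^{n_k}$ factors are produced, these genuinely reduce the count of $S$-factors (they do, since $\mathsf{v} \notin S$ in the relevant sense — or even if $\mathsf{v}$ happens to lie in $S$, one treats the $\mathsf{v}$-powers separately as a single absorbed block), and that the reordering of tensor factors is legitimate. The cleanest way to handle the reordering is to phrase the whole argument in terms of "an object which is a retract of $\mathsf{v}^{\otimes a} \otimes U_1 \otimes \cdots \otimes U_s$ with $U_i \in S$", note this class is closed under tensoring with $\mathsf{v}$ and with elements of $S$, closed under direct sums and retracts (trivially), and prove by induction on $s$ that every such object has strong decomposition; then every object of $\mcC^{S^{\mathsf{v}}}$ is of this form up to the reordering afforded by the braiding. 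One minor point worth stating explicitly: a tensor power $\mathsf{v}^{\otimes n}$ has strong decomposition, which follows by induction from $s=1$ applied with a trivial $U_1$, or simply from the fact that the class described above already contains it. With these conventions fixed, the proof is a short induction invoking Lemma \ref{ret}, hypotheses (1) and (2), and Theorem \ref{sd} (1).
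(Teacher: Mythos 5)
Your route --- reduce to Theorem \ref{sd}(1) by inducting on the number of $S$-factors in a tensor product, using hypothesis~(2) to shed one $S$-factor per step and Lemma \ref{ret} to pass through direct sums and retracts --- is exactly what the paper's one-sentence proof describes, and your inductive step isolating $U_1\otimes U_2$ is correct.

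The base case $s=0$ that you flag is, however, a genuine gap, and neither of your two attempted repairs actually closes it. The hypotheses assert only that $\mathsf{v}$ itself and $V\otimes\mathsf{v}^n$ for $V\in S$ have strong decomposition; nothing in them forces $\mathsf{v}^{\otimes a}$ for $a\ge 2$ to have strong decomposition, and strong decomposition is not preserved under tensoring in general --- that failure is precisely why Theorem \ref{sd} requires a hypothesis at all. Invoking ``$s=1$ with a trivial $U_1$'' covertly assumes $\unit\in S$, which is not among the stated hypotheses; and ``the class described above already contains it'' is circular, since the $s=0$ base case is exactly what needs to be established. The paper's own terse proof elides the same point: a pure power $\mathsf{v}^{\otimes a}$ is not a direct sum of retracts of objects $V\otimes\mathsf{v}^n$ with $V\in S$ unless $\unit$ or $\mathsf{v}$ happens to lie in $S$. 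In the intended application to $\bar{\mathcal{D}}^{\wp}$ of Subsection \ref{SS:slmn} this is harmless because $\unit\in S$ there, but for the corollary to hold at its stated level of generality one should add a hypothesis --- for instance that $\unit\in S$, or that $\mathsf{v}^{\otimes n}$ has strong decomposition for all $n\ge 0$, or that $\mathsf{v}\otimes\mathsf{v}$ decomposes as in hypothesis~(2). With any of these in place, your argument is complete and coincides with the paper's.
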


The proof of this corollary is an easy induction to show that any tensor product of objects in $S^{\mathsf{v}}$ is a direct sum of retracts of objects of the form $V \otimes \mathsf{v}^n$ for some $V \in S$ by property (2). One then applies condition (1) and Theorem \ref{sd}. An identical argument adding generic conditions where necessary provides the following:
\begin{corollary}\label{Cor2}
Let $\mcC$ be a linear ribbon category graded by a group $\Gr$ with a small symmetric subset $\XX$.  As above, let $S^{\mathsf{v}}:=S \cup \{\mathsf{v}\}$ where $S \subset \mathrm{Ob}(\cat)$ consists of objects with strong decomposition and $\mathsf{v} \in \mathrm{Ob}(\cat)$ is a distinguished object with strong decomposition.  Suppose  that the following three conditions hold, where a generic object is any object in $\mathcal{C}_g$ with $g \in \Gr \setminus \XX$:
\begin{enumerate}
\item Every generic object in $S^{\mathsf{v}}$ has strong decomposition.
\item Given any $V \in S$ and $n \in \mathbb{Z}_{\geq 0 }$, if $V \otimes \mathsf{v}^n$ is generic, then it has strong decomposition.
\item For $U_1 ,U_2 \in S$, if $U_1 \otimes U_2$ is generic, then $U_1 \otimes U_2 \cong \bigoplus\limits_k V_k$ where $V_k$ is a retract of $\tilde{V}_k \otimes \mathsf{v}^{n_k}$ for some $\tilde{V}_k \in S$ and $n_k \in \mathbb{Z}_{\geq 0}$.
\end{enumerate}
Then, every generic object in $\mathcal{C}^{S^{\mathsf{v}}}$ has strong decomposition.\\
\end{corollary}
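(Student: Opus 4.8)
The plan is to deduce the corollary from part~(2) of Theorem~\ref{sd}, applied to the set $S^{\mathsf{v}}$. Theorem~\ref{sd}(2) requires exactly two inputs: that every generic object of $S^{\mathsf{v}}$ have strong decomposition, which is precisely hypothesis~(1); and that every generic tensor product of objects of $S^{\mathsf{v}}$ have strong decomposition, which is the statement I must extract from hypotheses~(2) and~(3). Once that is done, Theorem~\ref{sd}(2) itself performs the passage from tensor products to all generic objects of $\mcC^{S^{\mathsf{v}}}$, so the whole argument reduces to proving: \emph{every generic tensor product of objects of $S^{\mathsf{v}}$ has strong decomposition.}

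For this I would mimic the induction behind Corollary~\ref{Cor1}, carrying the grading along. We may assume every object of $S^{\mathsf{v}}$ is homogeneous -- this holds in the settings where the corollary is used, and in general it is arranged by replacing each member by its homogeneous components, which alters neither $\mcC^{S^{\mathsf{v}}}$ (direct sums being already available) nor, suitably rephrased, the hypotheses -- so that every tensor product of objects of $S^{\mathsf{v}}$ is homogeneous and ``generic'' is unambiguous for it. Since $\mcC$ is ribbon, hence braided, the braiding isomorphisms reorder any tensor product of members of $S^{\mathsf{v}}$ into the shape $U_1\otimes\cdots\otimes U_p\otimes\mathsf{v}^{\otimes m}$ with $U_1,\dots,U_p\in S$ and $m\ge 0$, and I would induct on $p$. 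For $p=0$ the object is $\mathsf{v}^{\otimes m}$: when $m\le 1$ it lies in $S^{\mathsf{v}}$ and hypothesis~(1) applies, and the higher powers fall under hypothesis~(2) taken with $V=\unit$, which lies in $S$ in the cases of interest. For $p=1$ the object is $U_1\otimes\mathsf{v}^{\otimes m}$, generic by assumption, so hypothesis~(2) applies directly. For $p\ge 2$ I would feed $U_1\otimes U_2$ into hypothesis~(3) (the genericity needed to do so is discussed below): with $U_1\otimes U_2\cong\bigoplus_k V_k$ and each $V_k$ a retract of $\tilde V_k\otimes\mathsf{v}^{\otimes n_k}$, $\tilde V_k\in S$, the object becomes $\bigoplus_k\bigl(V_k\otimes U_3\otimes\cdots\otimes U_p\otimes\mathsf{v}^{\otimes m}\bigr)$, whose $k$-th summand is a retract of $\tilde V_k\otimes U_3\otimes\cdots\otimes U_p\otimes\mathsf{v}^{\otimes(m+n_k)}$, a tensor product of members of $S^{\mathsf{v}}$ with only $p-1$ factors from $S$. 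The original object being generic, each summand $V_k\otimes U_3\otimes\cdots\otimes\mathsf{v}^{\otimes m}$ is generic; and since the section of the retraction is a nonzero morphism between the two homogeneous objects involved, the fourth axiom of Definition~\ref{D:Gstr} forces $\tilde V_k\otimes U_3\otimes\cdots\otimes\mathsf{v}^{\otimes(m+n_k)}$ into that same generic degree. The induction hypothesis then applies to it, so it has strong decomposition; hence so does its retract $V_k\otimes U_3\otimes\cdots\otimes\mathsf{v}^{\otimes m}$, and hence, by Lemma~\ref{ret}, so does the direct sum. This closes the induction and establishes the claim.

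I expect the genericity bookkeeping in the inductive step to be the only real point. The benign part -- keeping the auxiliary objects in a generic degree -- is dispatched by the fourth axiom of Definition~\ref{D:Gstr}, as above: a retraction supplies a nonzero morphism between homogeneous objects, which must therefore share a degree. The delicate part is that hypothesis~(3) is only licensed when the two-fold product $U_1\otimes U_2$ being reduced is itself generic, and $\deg(U_1\otimes U_2)$ need not lie outside $\XX$ merely because the degree of the full product does. This is precisely the force of the phrase ``adding generic conditions where necessary'' that precedes the statement: in the situations to which the corollary is applied -- in particular for $\mathcal{D}^{\aleph}$, where by construction every object of $S$ is already generic, so every two-fold product encountered is generic -- the difficulty disappears, and in general one reads~(3) as the assertion available whenever the product to be reduced is generic. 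Everything else is the same formal induction as in Corollary~\ref{Cor1}, with Lemma~\ref{ret} absorbing the direct-sum-and-retract combinatorics and Theorem~\ref{sd}(2) doing the final step.
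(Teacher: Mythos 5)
Your proposal follows exactly the route the paper intends: reduce to Theorem~\ref{sd}(2) applied to $S^{\mathsf{v}}$, and for the only missing input (all generic tensor products of objects of $S^{\mathsf{v}}$ have strong decomposition) run the same induction as in Corollary~\ref{Cor1}, using the braiding to normalize the word, hypothesis~(3) to fuse two $S$-factors, hypothesis~(2) once only one $S$-factor remains, and Lemma~\ref{ret} plus axiom~(4) of Definition~\ref{D:Gstr} to propagate strong decomposition and degree through retracts and direct sums. The paper's own proof of this corollary is merely the phrase ``an identical argument adding generic conditions where necessary,'' so the structure of your argument is correct and is the intended one.

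You have also put your finger on a genuine subtlety that the paper's phrasing glosses over: hypothesis~(3) is only usable when the two-fold product $U_1\otimes U_2$ \emph{itself} lies in a generic degree, and the genericity of the full $p$-fold product does not impose this. However, your attempted dispatch of this point is incorrect. You assert that for $\mathcal{D}^{\aleph}$ ``every object of $S$ is already generic, so every two-fold product encountered is generic,'' but degrees add under tensor product, and a sum of two elements of $\Gr\setminus\XX$ can perfectly well lie in $\XX$ (for $\mathcal{D}^{\aleph}$, where $\XX=\{\bar 0\}\subset\C/\Z$, take two $S$-objects of opposite degree $\bar\alpha$ and $-\bar\alpha$). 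So genericity of the factors does not give genericity of the pair, and the ``difficulty'' does not in fact ``disappear.'' The induction as written therefore cannot close in general without either an extra hypothesis (e.g.\ that whenever a tensor word over $S^{\mathsf{v}}$ is generic, some pair of its $S$-factors has generic product) or a different parenthesization strategy that avoids non-generic intermediate pairs. You should be explicit that this is a loose end -- one that the paper's two-word justification inherits as well -- rather than present it as resolved.
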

It is easy to see that the category $\bar{\mathcal{D}}^{\wp}$ of Subsection \ref{SS:slmn} can be realized as $\mathcal{C}^{S^{\mathsf{v}}}$ for some choice of $S$ and $\mathsf{v}$. Condition $(1)$ of Corollary \ref{Cor2} holds by construction and a mild adjustment to the argument of \cite[Lemma 4.11]{AGP} (to allow for odd irreducible modules) together with \cite[Fact 5.8.2]{AGP} proves that condition $(3)$ holds. Finally, condition $(2)$ follows from \cite[Fact 5.8.1]{AGP}. We therefore see that the arguments in \cite{AGP} required to show that all generic objects in $\bar{\mathcal{D}}^{\wp}$ have strong decomposition can now be compressed into a few simple lemmas. We expect that these results will be useful for showing that other super quantum groups have similar properties. It is clear then that if a category $\mathcal{C}$ satisfies the conditions of Corollary \ref{Cor1}, then the quotient $\overline{\mathcal{C}}:=\mathcal{C}/\mathcal{N}(\mathcal{C})$ of $\mathcal{C}$ by its ideal $\mathcal{N}$ of negligible morphisms is generically semisimple.

\section{$U_{\xi}^H\mathfrak{sl}(2|1)$}\label{SS:U}

In \cite{AGP} it was conjectured that the category $\mathcal{D}^{\aleph}$ of Definition \ref{DN} for $U_{\xi}^H\mathfrak{sl}(2|1)$ is relative modular.  We now show that the $S$-matrix for this category is degenerate which disproves this conjecture by Theorem \ref{degenmod}.  Interestingly, while disproving the conjecture of \cite{AGP} we see how to possibly modify the translation group to get a non-degenerate $S$-matrix and a new conjecture postulating that the category with this translation group is relative modular category.  

In \cite{AGP}, also see Subsection \ref{SS:slmn}, for $U_{\xi}^H\mathfrak{sl}(2|1)$  the grading on $\mathcal{D}^{\aleph}$ is given by $G=\C/\Z$ and the small symmetric set is   $\XX=\{\bar0\}\subset G$.  Also, as in \cite{AGP}, let $\Zt=\Z/2 \times \Z$ and let the translation
group be $\{\sigma(\bar z, k)\}_{(\bar z,k) \in\Zt}$ where $\sigma(\bar z, k)$ is the 1-dimensional module generated by a vector of super degree $\bar z$ where $K_1, K_2$ act by 1, $H_2$ acts by $k\ell$ and all other generators act by zero.   
Finally, for $\alpha\in\C\setminus\Z$, as in Proposition 5.14 of \cite{AGP} the finite set of regular simple objects 
$$\Theta_{\wb \alpha}=\{V(\lambda^k_{\alpha+i}): 0\le k\le \ell-2,0\le i\le\ell-1\}
$$
determine a completely reduced dominating set for $\mathcal{D}^{\aleph}_{\wb \alpha}$ and $\mathcal{D}^{\aleph}$ is a Relative pre-modular category, as in Definition \ref{D:1}.  
With these choices the $S$-matrix is an $\ell(\ell-1) \times \ell(\ell-1)$ matrix.  
%

To show this S-matrix is non-degenerate we consider the following module.  For an integer $k$ such that $1\leq k\leq \ell -1$, the Lie superalgebra $\mathfrak{sl}(2|1)$ has a $2k+1$-dimensional
simple module obtained as the super-symmetric power
$S^k(\C^{(2|1)})$ with highest weight $k\omega_1$, where $\omega_1$ is the first fundamental weight. 
 This module has a deformation $A_k$ with the same (super)-character which remains simple when $q$ is a root of unity.  We give here a
complete description of $A_k$.

\begin{lemma}
  Let $v_0^0$ be a highest weight vector of $A_k$.  Then $A_k$ has a
  basis with vectors $\big\{v_i^j\big\}_{0\le j\le1,\,0\le i\le k-j}$ of
  parity $j$ and for which the action of the generators is given by
  \begin{equation}\label{E:AkH}
H_1v^j_i=(k-j-2i)v^j_i,\quad H_2v^j_i=(i+j)v^j_i,
\end{equation}
  \begin{equation}\label{E:Akef1}
F_1v^j_i=v^j_{i+1},\quad E_2v^1_i=v^0_{i+1},
\end{equation}
  \begin{equation}\label{E:Akef2}
E_1v^j_{i}=[i][k-j+1-i]v^j_{i-1}, \quad F_2v^0_i=[i+1]v^1_{i-1}.
\end{equation}
\end{lemma}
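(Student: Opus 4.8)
The plan is to prove the Lemma by simply \emph{taking} the formulas \eqref{E:AkH}--\eqref{E:Akef2} as the definition of a $(2k+1)$-dimensional representation and then checking the defining relations of $U_q^H\mathfrak{sl}(2|1)$; the ``same (super)character'' and ``remains simple'' assertions made just before the Lemma then drop out quickly. Concretely, I would let $A_k$ be the $\ZZ/2$-graded vector space with basis $\{v_i^j\}_{0\le j\le 1,\,0\le i\le k-j}$ ($v_i^j$ of parity $j$), declare the generators to act by \eqref{E:AkH}--\eqref{E:Akef2} with the conventions that any $v_i^j$ outside the stated range is $0$, that $E_2v_i^0=0$ and $F_2v_i^1=0$ (these are forced by weights: no basis vector carries the weight that $E_2v_i^0$ or $F_2v_i^1$ would have), and that $K_i:=q^{d_iH_i}$ acts by the scalar dictated by \eqref{E:AkH}. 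Then $A_k$ is a candidate weight module with highest weight vector $v_0^0$ of weight $k\omega_1$, so the real content is verifying the relations.

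For Step~2 (the relations) I would dispatch the easy cases first: \eqref{A1}, \eqref{A2}, \eqref{A7} are immediate since the $H_i$ act diagonally and $K_i=q^{d_iH_i}$; \eqref{A4} and \eqref{A6} are vacuous because $U_q^H\mathfrak{sl}(2|1)$ has only the two simple roots indexed $1,2$; and restricted to the subalgebra generated by $E_1,F_1,K_1$ each ``string'' $\{v_i^0\}_{0\le i\le k}$ and $\{v_i^1\}_{0\le i\le k-1}$ is manifestly the standard simple $U_q^H\mathfrak{sl}_2$-module of the indicated dimension — the only point needing a computation is \eqref{A3} for $(i,j)=(1,1)$, which on $v_i^j$ reduces to the $q$-integer identity $[i+1][m-i]-[i][m+1-i]=[m-2i]$ with $m=k-j$. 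The relations $E_2^2=F_2^2=0$ hold because $E_2$ (resp.\ $F_2$) maps the odd string into the even one (resp.\ the even string into the odd one) and the composite of two such maps vanishes on each basis vector. What is then left is \eqref{A3} for the isotropic root $(i,j)=(2,2)$, i.e.\ $\{E_2,F_2\}=\dfrac{K_2-K_2^{-1}}{q^{d_2}-q^{-d_2}}$, and the mixed quantum Serre relation \eqref{A5} for $(i,j)=(1,2)$; both I would verify by a direct evaluation on each $v_i^j$ using only elementary $q$-integer arithmetic. This establishes that $A_k$ is a $U_q^H\mathfrak{sl}(2|1)$-module and a weight module, which is exactly the assertion of the Lemma.

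For Step~3 I would note that by \eqref{E:AkH} the (super)character of $A_k$ equals that of the classical module $S^k(\C^{(2|1)})$ — as an $\mathfrak{sl}_2$-module both are $V_k\oplus V_{k-1}$ — so $A_k$ is the claimed deformation, and for the simplicity statement at a primitive $\ell$-th root of unity with $1\le k\le\ell-1$ I would observe that every $q$-integer occurring as a structure constant in \eqref{E:Akef1}--\eqref{E:Akef2} is of the form $[n]$ with $1\le n\le k\le\ell-1$, hence nonzero. Therefore a nonzero submodule $M\subseteq A_k$, being a sum of weight spaces, contains some $v_i^j$; applying $E_1$ repeatedly (and, when $j=1$, first $E_2$ to move onto the even string) reaches a nonzero multiple of $v_0^0$, so $v_0^0\in M$, and then applying $F_1$ and $F_2$ repeatedly, again with nonzero coefficients, forces $M=A_k$.

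The main obstacle is the portion of Step~2 involving the odd, isotropic generators $E_2,F_2$: unlike the $E_1,F_1,H_1$ part, which is textbook $U_q\mathfrak{sl}_2$ theory, verifying \eqref{A3} for $(2,2)$ and the mixed Serre relation \eqref{A5} requires careful bookkeeping of parities (the brackets there are super-commutators) and of the normalization conventions attached to the odd root of $\mathfrak{sl}(2|1)$. I expect this combinatorial verification, rather than any conceptual difficulty, to be where the argument actually has to be carried out in full.
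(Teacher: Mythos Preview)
Your approach is valid but genuinely different from the paper's. The paper takes the existence of $A_k$ as a highest weight module (the deformation of $S^k(\C^{2|1})$, stated just before the Lemma) for granted and then \emph{derives} the formulas: the $H$-action from the known character; the basis vectors $v_i^j$ via $F_1 v_i^j = v_{i+1}^j$ and $E_2 v_i^1 = v_{i+1}^0$, which is consistent precisely because $E_2$ and $F_1$ commute; and then the actions of $E_1$ and $F_2$ by pushing through the relations $[E_1,F_1]=\frac{K_1-K_1^{-1}}{q-q^{-1}}$ and $\{E_2,F_2\}=\frac{K_2-K_2^{-1}}{q-q^{-1}}$. You go in the opposite direction: take the formulas as a definition and verify every defining relation of $U_q^H\mathfrak{sl}(2|1)$. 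Your route is more self-contained --- it establishes existence rather than assuming it, and your Step~3 supplies an explicit proof of simplicity at the root of unity, which the paper only asserts --- at the cost of a substantially longer verification; the paper's argument is three sentences.

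One gap in your checklist: relation \eqref{A3} also forces the off-diagonal vanishing $[E_1,F_2]=0$ and $[E_2,F_1]=0$, which you do not mention. The second of these, $E_2F_1=F_1E_2$, is precisely what the paper uses to make the basis definition consistent and is immediate from \eqref{E:Akef1}; the first, $E_1F_2=F_2E_1$ on $v_i^0$, is a genuine $q$-integer identity that you must check and should be added to your ``what is then left'' list alongside the $(2,2)$ case of \eqref{A3} and the Serre relation \eqref{A5}. In fact this check is a useful cross-validation of the coefficient in the $F_2$-formula of \eqref{E:Akef2}, so it is worth carrying out explicitly rather than folding into the ``elementary $q$-integer arithmetic'' remark.
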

\begin{proof}
 These relations come from the fact that $A_k$ is a highest weight module. In particular,  Equation~\eqref{E:AkH} comes from the known character of the
  $\mathfrak{sl}(2|1)$-module.  Equation~\eqref{E:Akef1} defines the basis because 
  that $E_2$ and $F_1$ commute.  Equation~\eqref{E:Akef2} can be deduced from
  the relations $E_1F_1-F_1E_1=\frac{K_1-K_1^{-1}}{q-q^{-1}}$ and
  $E_2F_2+F_2E_2=\frac{K_2-K_2^{-1}}{q-q^{-1}}$.
\end{proof}

Let $A=A_{\ell-1}$.

\begin{lemma}
  For any $\alpha\in\C\setminus\Z$, $0\le k\le \ell-2,0\le i\le\ell-1$,
  we have
  $$A\otimes V(\lambda^k_{\alpha+i})\simeq 
  \bar{\mathbb{C}} \otimes V(\lambda^{\ell-2-k}_{\alpha+i+k+1})
  \; \text{ and } \;
  A^{\otimes2}\otimes V(\lambda^k_{\alpha+i})\simeq \sigma(\bar 0, 1)\otimes
  V(\lambda^k_{\alpha+i})\; \text{ in
  }\mathcal{D}^{\aleph}.$$ 
  \end{lemma}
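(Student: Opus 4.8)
The plan is to deduce the second isomorphism from the first, and to prove the first by cutting out its right-hand side as a direct summand and checking that everything else is negligible. For the reduction, note that $\bar{\mathbb{C}}=\sigma(\bar 1,0)$ and $\varepsilon=\sigma(\bar 0,1)$ are $1$-dimensional, hence invertible, with $\bar{\mathbb{C}}^{\otimes 2}\cong\unit$, and that tensoring with $\varepsilon$ shifts $H_2$-weights by $\ell$ while fixing $H_1$-weights, so $\varepsilon\otimes V(\lambda^k_z)\cong V(\lambda^k_{z+\ell})$. Granting the first isomorphism, apply $A\otimes(-)$, move $\bar{\mathbb{C}}$ past $A$ via the braiding, and apply the first isomorphism a second time with $k$ replaced by $\ell-2-k$ and $i$ by $i+k+1$:
\[ A^{\otimes 2}\otimes V(\lambda^k_{\alpha+i})\simeq \bar{\mathbb{C}}\otimes A\otimes V(\lambda^{\ell-2-k}_{\alpha+i+k+1})\simeq \bar{\mathbb{C}}^{\otimes 2}\otimes V(\lambda^{k}_{\alpha+i+\ell})\simeq V(\lambda^k_{\alpha+i+\ell})\simeq \varepsilon\otimes V(\lambda^k_{\alpha+i}).\]
So it suffices to prove the first isomorphism in $\mathcal{D}^{\aleph}$.

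Set $W:=\bar{\mathbb{C}}\otimes V(\lambda^{\ell-2-k}_{\alpha+i+k+1})$. As $0\le \ell-2-k\le\ell-2$ and $\overline{\alpha+i+k+1}=\overline{\alpha}\in\Gr\setminus\XX$, the object $W$ is a simple typical module of generic index, hence projective and injective in $\bar{\mathcal{D}}^{\wp}$. It is therefore enough to show that $W$ occurs as a direct summand of $A\otimes V(\lambda^k_{\alpha+i})$ and that the complementary summand is negligible, since then the two objects become isomorphic in the quotient $\mathcal{D}^{\aleph}$. For the first point I would work with the explicit basis $\{v^j_i\}$ of $A=A_{\ell-1}$ and the generator actions from the preceding Lemma, together with the highest weight structure of $V(\lambda^k_{\alpha+i})$ (whose vectors of largest $H_1$-weight are $w_0$ and $F_2 w_0$, and whose $\mathfrak{sl}_2$-string through $w_0$ has $k+1$ vectors): one writes out the weight space of $A\otimes V(\lambda^k_{\alpha+i})$ of weight $\lambda^{\ell-2-k}_{\alpha+i+k+1}$ and verifies by a finite linear-algebra computation that the joint kernel of $E_1$ and $E_2$ there is one-dimensional, spanned by a vector of parity opposite to that of $w_0$ (this parity shift is exactly what the tensor factor $\bar{\mathbb{C}}$ accounts for). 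That vector generates a highest weight submodule, which has typical generic highest weight, hence is simple and isomorphic to $W$; projectivity of $W$ then splits the inclusion.

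The main obstacle is the negligibility of the complement. Since $V(\lambda^k_{\alpha+i})$ is projective, $A\otimes V(\lambda^k_{\alpha+i})$ is a direct sum of indecomposable projectives, and its non-negligible summands are precisely the simple typical modules, i.e.\ the translates of the $V(\lambda^{k'}_{\alpha+i'})$ with $k'\in\{0,\dots,\ell-2\}$. I would compute $\sch\bigl(A\otimes V(\lambda^k_{\alpha+i})\bigr)=\sch(A)\,\sch\bigl(V(\lambda^k_{\alpha+i})\bigr)$ and compare with $\sch(W)$, checking that the difference is a non-negative integral combination of super-characters of non-simple indecomposable projectives of $\bar{\mathcal{D}}^{\wp}$: the $H_1$-weights of $A$ range over an interval of length about $2\ell$, so every other highest weight appearing in the fusion of $A$ with $V(\lambda^k_{\alpha+i})$ has $\omega_1$-coefficient outside the alcove $\{0,\dots,\ell-2\}$, and the associated standard modules combine into non-simple indecomposable projectives by the linkage principle for $U_{\xi}^H\mathfrak{sl}(2|1)$ (as in the representation-theoretic analysis of \cite{AGP}). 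Because non-simple indecomposable projectives are negligible and the non-negligible summand of a generic projective module is recovered from its super-character, this gives $\overline{A\otimes V(\lambda^k_{\alpha+i})}\cong\overline{W}$ in $\mathcal{D}^{\aleph}$. A slicker variant worth trying first is to establish directly that $A^{\otimes 2}\cong\varepsilon\oplus N$ with $N$ negligible --- a quantum Steinberg-type identity whose $1$-dimensional summand $\varepsilon$ is cut out by an explicit highest weight vector in $A\otimes A$ of weight $\ell\omega_2$ annihilated by all of $E_1,E_2,F_1,F_2$; this makes $A\otimes(-)$ invertible on the generic part of $\mathcal{D}^{\aleph}$, its square being $\varepsilon\otimes(-)$, so that $\overline{A\otimes V(\lambda^k_{\alpha+i})}$ is automatically simple and only needs to be identified by the weight/character data above.
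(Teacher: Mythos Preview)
Your reduction of the second isomorphism to the first is the same as the paper's. For the first isomorphism, however, the paper takes a different and cleaner route: it proves only the case $k=0$ directly (where the generic-$q$ character computation yields exactly two summands, $V(\lambda^{\ell-1}_\alpha)$ and $\bar{\mathbb{C}}\otimes V(\lambda^{\ell-2}_{\alpha+1})$, the first of which sits at the boundary of the alcove and is negligible), and then runs a strong induction on $k$ by computing $A\otimes V(\lambda^k_\alpha)\otimes\mathsf{v}$ in two ways using the known fusion rule
\[
V(\lambda^k_\alpha)\otimes\mathsf{v}\simeq V(\lambda^{k+1}_\alpha)\oplus V(\lambda^{k-1}_{\alpha+1})\oplus\bigl(\bar{\mathbb{C}}\otimes V(\lambda^k_{\alpha+1})\bigr)
\]
together with the induction hypothesis, and then invoking uniqueness of the decomposition into indecomposables. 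This avoids ever having to analyze the full tensor product $A\otimes V(\lambda^k_\alpha)$ for $k>0$.

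Your direct approach would in principle work, but the key assertion that ``every other highest weight appearing in the fusion of $A$ with $V(\lambda^k_{\alpha+i})$ has $\omega_1$-coefficient outside the alcove $\{0,\dots,\ell-2\}$'' is false as stated. Already for $k=1$ the generic-$q$ decomposition produces summands with $\omega_1$-coefficients $\ell,\ \ell-1,\ \ell-2,\ \ell-3$; the coefficient $\ell-2$ lies in the alcove, so that summand is a non-negligible simple typical by itself. What actually happens (and what you gesture at with the linkage remark) is that at the root of unity the summands with coefficients $\ell$ and $\ell-2$ merge into a single non-simple indecomposable projective, which is negligible. Making this precise for general $k$ requires tracking $2(k+1)$ summands and pairing them correctly, which is considerably more work than the paper's induction. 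Finally, your ``slicker variant'' of establishing $A^{\otimes 2}\cong\varepsilon\oplus N$ directly in $\mathcal{D}^{\aleph}$ is exactly one of the open questions posed at the end of the paper; it is not known, and even the membership of $A$ (hence of $A^{\otimes 2}$) in $\mathcal{D}^{\aleph}$ is not addressed there, so this route cannot currently be used as a shortcut.
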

\begin{proof}
We will prove the first statement by strong induction on $k$. Then the second statement follows from applying the first statement twice. We need the following character formulas. We write $\chi^+$ and $\chi^-$ for the usual character and supercharacter, respectively.  
  In \cite{GP} we write (super)characters of
  $U_{q}\mathfrak{sl}(2|1)$-modules using variable
  $\textrm{e}^{\epsilon_1}$ and $\textrm{e}^{\epsilon_2}$. Let $x,y$
  be such that $\textrm{e}^{\epsilon_1}=y/x$ and
  $\textrm{e}^{\epsilon_2}=xy$. 
 Using $x$ and $y$ we can have the following (super)characters: 1) the character of the standard $(2|1)$-dimensional module $\mathsf{v}$ is   $\chi^{\pm}(\mathsf{v})=y(x+1/x\pm1)$.   The   (super)character of a typical module $V(\lambda^k_\alpha)$ is 
 $$\chi^{\pm}(V(\lambda^k_\alpha))=X_0^{\pm}y^{2\alpha+k}\dfrac{(x^{k+1}-x^{-k-1})}{x^{}-x^{-1}}$$  where $X_0^{\pm}=(1\pm\frac yx)(1\pm xy)$.  Finally, the (super)character for $A$ is 
 $$\chi^{\pm}(A)=\dfrac{y^n(x^{n+1}-x^{-n-1})\pm   y^{n+1}(x^{n}-x^{-n})}{x^{}-x^{-1}}.$$

For generic $q$ these character formula imply that $A\otimes V(\lambda^0_\alpha)$ is the direct sum of two simple modules of highest weights $(\ell-1,\alpha)$ and $(\ell-2,\alpha+1)$.  Then the supercharacter gives the parity of these modules and we can conclude $$A\otimes V(\lambda^0_\alpha)=V(\lambda^{\ell-1}_\alpha)\oplus ( \bar{\mathbb{C}}\otimes {V(\lambda^{\ell-2}_{\alpha+1})}).$$
  Since both modules appearing in this decomposition belong to
  the alcove (see \cite{AGP}) then the decomposition remains valid at
  root of unity. Finally, since $V(\lambda^{\ell-1}_\alpha)$ is a simple with $0$
  modified dimension, it is negligible in
  $\bar{\mathcal{D}}^{\wp}$ and thus zero in $\mathcal{D}^{\aleph}$.  Thus, this proves
  the $k=0$ statement, as $\alpha$ is generic.  
  
  Next, assume the statement is true for all natural number less than or equal to $k$.  The above character formulas show
  \begin{equation}\label{E:Vav}
  V(\lambda^k_\alpha)\otimes\mathsf{v}\simeq
  V(\lambda^{k+1}_\alpha)\oplus V(\lambda^{k-1}_{\alpha+1})\oplus ( \bar{\mathbb{C}}\otimes{V(\lambda^k_{\alpha+1})}).
  \end{equation}
  Combining this and the induction assumption we have
  $$A\otimes (V(\lambda^k_\alpha)\otimes\mathsf{v})\simeq 
  \left(A\otimes V(\lambda^{k+1}_\alpha)\right)\oplus
   ( \bar{\mathbb{C}}\otimes{V(\lambda^{\ell-k-1}_{\alpha+1+k})}\oplus{V(\lambda^{\ell-k-2}_{\alpha+k+2})}$$
 in $\mathcal{D}^{\aleph}$.  
On the other hand,  the induction assumption also shows
$$(A\otimes V(\lambda^k_\alpha))\otimes\mathsf{v}\simeq  ( \bar{\mathbb{C}}\otimes{V(\lambda^{\ell-k-2}_{\alpha+k+1})})\otimes\mathsf{v}$$
in $\mathcal{D}^{\aleph}$.  From Equation \eqref{E:Vav} the right hand side of the last equality is isomorphic to 
$$ ( \bar{\mathbb{C}}\otimes{V(\lambda^{\ell-k-1}_{\alpha+k+1})})\oplus  ( \bar{\mathbb{C}}\otimes{V(\lambda^{\ell-k-3}_{\alpha+k+2})})\oplus{V(\lambda^{\ell-k-2}_{\alpha+k+2})}$$ 
in $\mathcal{D}^{\aleph}$.  Thus by unicity of the decomposition into indecomposable, we have
$$A\otimes  V(\lambda^{k+1}_\alpha)\simeq ( \bar{\mathbb{C}}\otimes{V(\lambda^{\ell-k-3}_{\alpha+k+2})})$$ in $\mathcal{D}^{\aleph}$ and this completes the induction step.   
\end{proof}
\begin{lemma}
If $W$ is a simple object in $\mathcal{D}^{\aleph}_{\bar \alpha}$ then $S'(A,W) =q^{-2\bar \alpha \ell}$ where $S'$ is defined in Equation \eqref{eq:Sprime}.
\end{lemma}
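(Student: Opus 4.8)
The plan is to express $S'(A,W)$ through ribbon twists and then evaluate it using the decomposition of $A\otimes W$ supplied by the preceding lemma; everything takes place inside $\mathcal{D}^{\aleph}$, where the $S'$-pairing is defined. The key structural point is that all three of $A=A_{\ell-1}$, $W$ and $A\otimes W$ are \emph{simple} in $\mathcal{D}^{\aleph}$: $A$ is simple by construction, $W$ is simple by hypothesis, and the preceding lemma gives $A\otimes W\simeq\bar{\mathbb{C}}\otimes V(\lambda^{\ell-2-k}_{\alpha+i+k+1})$ when $W\simeq V(\lambda^{k}_{\alpha+i})$, which is simple because $\bar{\mathbb{C}}$ is $1$-dimensional. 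Writing $t_X\in\kk^{*}$ for the scalar by which the twist $\theta$ acts on a simple object $X$, the ribbon identity $\theta_{A\otimes W}=(\theta_A\otimes\theta_W)\circ c_{W,A}\circ c_{A,W}$ forces $c_{W,A}\circ c_{A,W}=t_A^{-1}t_W^{-1}t_{A\otimes W}\,\Id_{A\otimes W}$, and applying $\ptr_A$ to both sides yields
\[
  S'(A,W)\,\Id_W=\ptr_A\!\big(c_{W,A}\circ c_{A,W}\big)=t_A^{-1}\,t_W^{-1}\,t_{A\otimes W}\,\dim_q(A)\,\Id_W ,
\]
where $\dim_q(A)$ is the categorical quantum (super)dimension, so that $\ptr_A(\Id_{A\otimes W})=\dim_q(A)\,\Id_W$. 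I would first record that $\dim_q(A)\neq0$: with the explicit basis of the previous lemma the relevant alternating (super)trace over the weights of $A$ telescopes to a nonzero constant.

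It then remains to evaluate the four scalars. The twist on a simple typical highest weight module is the Casimir eigenvalue $t_{V(\lambda)}=q^{-(\lambda,\lambda+2\rho)}$ for the invariant bilinear form on $\mathfrak{h}^{*}$. Using $t_{\bar{\mathbb{C}}\otimes V}=\psi(\bar\alpha,(\bar 1,0))\,t_V$ (another application of the ribbon identity together with Equation \eqref{eq:psi}) and the fact that $\psi(\,\cdot\,,(\bar 1,0))=1$ — the double braiding with the odd $1$-dimensional $\bar{\mathbb{C}}=\sigma(\bar 1,0)$, on which all $H_i$ and all raising/lowering generators act by $0$, is the identity — we get $t_{A\otimes W}=t_{V(\lambda^{\ell-2-k}_{\alpha+i+k+1})}$. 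Plugging in the highest weights $\lambda^{k}_{\alpha+i}$ and $\lambda^{\ell-2-k}_{\alpha+i+k+1}$ and expanding the two Casimir values, the $k$-dependence cancels and the exponent of $t_W^{-1}t_{A\otimes W}$ becomes $-2\ell(\alpha+i)$ plus a constant independent of $W$; since $\alpha+i\equiv\bar\alpha$ in $\mathbb{C}/\mathbb{Z}$ and $q^{2\ell}=1$ kills the integer shift $i$, and the remaining constant together with $\dim_q(A)\,t_A^{-1}$ reduces to $1$, this leaves $S'(A,W)=q^{-2\bar\alpha\ell}$. Finally, an arbitrary simple object of $\mathcal{D}^{\aleph}_{\bar\alpha}$ is, up to a translation by some $\sigma(j)$, one of the $V(\lambda^{k}_{\alpha+i})$, and tensoring $W$ by $\sigma(j)$ does not change $S'(A,W)$: applying the hexagon identity to the double braiding of $A$ with $W\otimes\sigma(j)$ and using $\psi(\bar 0,j)=1$ (because $A\in\mathcal{D}^{\aleph}_{\bar 0}$, and $\psi$ is bilinear) shows the double braiding equals $(c_{W,A}\circ c_{A,W})\otimes\Id_{\sigma(j)}$, so the partial trace over $A$ is unchanged. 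Hence $S'(A,W)=q^{-2\bar\alpha\ell}$ for every simple $W\in\mathcal{D}^{\aleph}_{\bar\alpha}$.

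The main obstacle is the last bookkeeping step: keeping the super-pivotal signs, the normalization of the invariant form on the Cartan of $\mathfrak{sl}(2|1)$, and the twist/ribbon sign conventions mutually consistent, so that the residual scalar is \emph{exactly} $1$ and the answer is $q^{-2\bar\alpha\ell}$ rather than, say, $-q^{-2\bar\alpha\ell}$ or $q^{1-2\bar\alpha\ell}$; in particular one must pin down $\dim_q(A)$ precisely. As an independent check I would use multiplicativity of the operators $\Phi$ from Section \ref{SS:ND}: from $\Phi_{A\otimes A,W}=\Phi_{A,W}\circ\Phi_{A,W}$ one gets $S'(A,W)^{2}=S'(A^{\otimes2},W)$, and the preceding lemma identifies $A^{\otimes 2}$ with $\sigma(\bar 0,1)$ in $\mathcal{D}^{\aleph}$, whence $S'(A^{\otimes2},W)=\dim_q(\sigma(\bar 0,1))\,\psi(\bar\alpha,(\bar 0,1))=\pm q^{-4\bar\alpha\ell}$; this confirms $S'(A,W)=\pm q^{-2\bar\alpha\ell}$ and fixes the exponent, leaving only the overall sign to be settled by the twist computation above.
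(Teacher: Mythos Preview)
Your approach is genuinely different from the paper's. The paper simply asserts that the result follows from a direct computation analogous to \cite[Proposition~2.2]{GP2}, meaning an explicit $R$-matrix calculation on the module $A$ with no reference to the preceding decomposition lemma. You instead leverage that lemma together with the ribbon identity $\theta_{A\otimes W}=(\theta_A\otimes\theta_W)\,c_{W,A}c_{A,W}$: since $A\otimes W$ is simple in $\mathcal{D}^{\aleph}$, the double braiding becomes a scalar there, and the partial trace gives $S'(A,W)=t_A^{-1}t_W^{-1}t_{A\otimes W}\dim_q(A)$. This is elegant because it recycles work already done, and your key observation---that the computation may be carried out in $\mathcal{D}^{\aleph}$ even though $A\otimes W$ is \emph{not} simple in $\bar{\mathcal{D}}^{\wp}$---is correct: the quotient functor is a ribbon functor, $W$ is simple and non-negligible so $\End(W)$ is unchanged, and hence the scalar $S'(A,W)$ agrees in both categories. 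Your telescoping argument for $\dim_q(A)\neq0$ and the verification $\psi(\cdot,(\bar1,0))=1$ are also correct.

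Two caveats. First, your consistency check overstates what the preceding lemma provides: it shows $A^{\otimes2}\otimes W\simeq\sigma(\bar 0,1)\otimes W$ for generic $W$, not $A^{\otimes2}\simeq\sigma(\bar 0,1)$ in $\mathcal{D}^{\aleph}$---indeed the paper lists the latter (up to a possible typo in the $\sigma$-label) as an open question at the very end. You can still extract $t_{A^{\otimes2}\otimes W}=\psi(\bar\alpha,(\bar0,1))\,t_W$ from the known isomorphism, but then the unknown scalar $t_{A^{\otimes2}}$ enters, so the check only constrains $S'(A,W)$ up to a $W$-independent factor---precisely the ambiguity you already flagged. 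Second, and relatedly, your argument as written determines $S'(A,W)$ only up to an overall constant: the claim that ``the remaining constant together with $\dim_q(A)\,t_A^{-1}$ reduces to $1$'' is asserted but not verified, and this is a genuine computation (involving the Casimir on $(\ell-1)\omega_1$, the super-pivotal normalization, and the precise value of $\dim_q(A)$) that must actually be done. The paper's direct $R$-matrix approach sidesteps this residual bookkeeping because the constant falls out of the calculation automatically.
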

\begin{proof}
The proof follows from a direct computation. A detailed presentation of an analogous
computation is given in \cite[Proposition 2.2]{GP2}.
\end{proof}
\begin{corollary}\label{rankintro}
 The rank of
  the $S$-matrix is atmost $\frac{\ell(\ell-1)}2$.
 Thus, Theorem \ref{degenmod} implies the Conjecture 5.19 of \cite{AGP} is false.
\end{corollary}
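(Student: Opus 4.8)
The plan is to exploit the module $A = A_{\ell-1}$ constructed above to show that the $S'$-matrix $S'_{\bar\alpha}$ has many proportional rows, and then invoke Lemma~\ref{L:Snon-degS'} (equivalence of non-degeneracy of $S_g$ and $S'_g$) together with Theorem~\ref{degenmod}(1) (if $\mathcal{D}^{\aleph}$ were relative modular, $S_g$ would be non-degenerate).

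First I would observe that for $g = \bar\alpha$, the $\ell(\ell-1)$ simple objects $V(\lambda^k_{\alpha+i})$, $0 \le k \le \ell-2$, $0 \le i \le \ell-1$, are the rows/columns of $S'_{\bar\alpha}$. The key computational input is the second isomorphism of the preceding lemma: $A^{\otimes 2} \otimes V(\lambda^k_{\alpha+i}) \simeq \sigma(\bar 0, 1) \otimes V(\lambda^k_{\alpha+i})$ in $\mathcal{D}^{\aleph}$, and the first isomorphism $A \otimes V(\lambda^k_{\alpha+i}) \simeq \bar{\mathbb{C}} \otimes V(\lambda^{\ell-2-k}_{\alpha+i+k+1})$. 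The first isomorphism says that tensoring with $A$ permutes the simple objects (up to the $\bar{\mathbb{C}}$ twist and reindexing): the map $(k,i) \mapsto (\ell-2-k, \overline{i+k+1})$ on the index set. This is an involution: applying it twice sends $(k,i)$ to $(k, \overline{i + (k+1) + (\ell-2-k)+1}) = (k, \overline{i+\ell}) = (k,i)$ since $i \in \mathbb{Z}/\ell$ (indices mod $\ell$ via the $\sigma$-orbits). So $A \otimes -$ pairs up the simple objects into orbits of size at most $2$, and the number of fixed points is controlled: $(k,i)$ is fixed iff $k = \ell-2-k$, i.e. $k = (\ell-2)/2$ (only when $\ell$ is even), giving $\ell$ fixed points; otherwise orbits have size $2$.

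Next I would use the multiplicativity $\Phi_{U \otimes U', V} = \Phi_{U,V} \circ \Phi_{U',V}$ together with the third lemma, $S'(A,W) = q^{-2\bar\alpha\ell}$ for all simple $W \in \mathcal{D}^{\aleph}_{\bar\alpha}$ — crucially this scalar is \emph{independent of $W$}. Combining this with the isomorphism $A \otimes V(\lambda^k_{\alpha+i}) \simeq \bar{\mathbb{C}} \otimes V(\lambda^{\ell-2-k}_{\alpha+i+k+1})$ and the fact that $\bar{\mathbb{C}}$ is a parity-shift with $S'(\bar{\mathbb{C}}, -) = \pm 1$, one gets for every column index $j = (k,i)$ a relation of the form $S'_{A \otimes V_j,\, V_m} = q^{-2\bar\alpha\ell}\, S'_{V_j, V_m}$ while also $S'_{A \otimes V_j,\, V_m} = \pm\, S'_{V_{(\ell-2-k,\,i+k+1)},\, V_m}$; hence the row indexed by $(\ell-2-k, i+k+1)$ is a fixed scalar multiple of the row indexed by $(k,i)$. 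Since these index pairs run over size-$2$ orbits covering all but $\le \ell$ of the $\ell(\ell-1)$ rows, at most half the rows are linearly independent plus the fixed-point contribution; a careful count (the fixed points occur only for even $\ell$ and number $\ell$, while there are $\ell(\ell-1)/2 - \ell/2$ size-$2$ orbits among non-fixed indices when $\ell$ even, and $\ell(\ell-1)/2$ orbits when $\ell$ odd) yields $\operatorname{rank}(S'_{\bar\alpha}) \le \ell(\ell-1)/2$ in all cases. By Lemma~\ref{L:Snon-degS'}, $\operatorname{rank}(S_{\bar\alpha}) = \operatorname{rank}(S'_{\bar\alpha}) \le \ell(\ell-1)/2 < \ell(\ell-1)$, so $S_{\bar\alpha}$ is degenerate for every generic index; by Lemma~\ref{L:rk-S-mixed} (rank of all mixed $S$-matrices is constant, assuming no zero entries) or simply because no generic index gives a non-degenerate $S_g$, Theorem~\ref{degenmod}(1) forces $\mathcal{D}^{\aleph}$ to fail to be relative modular, disproving Conjecture~\ref{introconj}.

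The main obstacle I anticipate is the bookkeeping of parity signs and the precise orbit count: one must track the odd module $\bar{\mathbb{C}}$ carefully through the $S'$-computations (its presence means rows are proportional with signs $\pm q^{-2\bar\alpha\ell}$, not just $q^{-2\bar\alpha\ell}$), verify that the reindexing $(k,i)\mapsto(\ell-2-k,\overline{i+k+1})$ is genuinely an involution on the chosen representatives $\Theta_{\bar\alpha}$ modulo the $\sigma$-translation (this uses that $\sigma(\bar 0,1)$-translation identifies $V(\lambda^k_{\alpha+i})$-orbits correctly, which is exactly what the $A^{\otimes 2}$ isomorphism provides), and handle the parity of $\ell$ so that the fixed-point count never pushes the bound above $\ell(\ell-1)/2$. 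None of this is conceptually hard, but it is where an error could creep in; everything else is a direct application of the multiplicativity of $\Phi$ and the three lemmas just established.
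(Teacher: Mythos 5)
Your strategy matches the paper's: multiplicativity $\Phi_{U\otimes U',V}=\Phi_{U,V}\circ\Phi_{U',V}$, the constant $S'(A,W)=q^{-2\bar\alpha\ell}$, and the isomorphism $A\otimes V(\lambda^k_{\alpha+i})\simeq\bar{\mathbb{C}}\otimes V(\lambda^{\ell-2-k}_{\alpha+i+k+1})$ combine to show that the row of $(k,i)$ equals $-q^{2\bar\alpha\ell}$ times the row of $(\ell-2-k,\overline{i+k+1})$, independent of $W$; then Lemma~\ref{L:Snon-degS'} and Theorem~\ref{degenmod}(1) finish the job. That is exactly the paper's proof.

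However, your fixed-point analysis of the involution $\tau:(k,i)\mapsto(\ell-2-k,\overline{i+k+1})$ is wrong and would, if taken at face value, break the bound. You claim that when $\ell$ is even there are $\ell$ fixed points coming from $k=(\ell-2)/2$, and assert without justification that the rank bound $\ell(\ell-1)/2$ holds ``in all cases.'' But you forgot the $i$-condition: a fixed point requires both $k=\ell-2-k$ and $k+1\equiv 0\pmod\ell$; substituting $k=(\ell-2)/2$ into the second gives $\ell/2\equiv 0\pmod\ell$, which is impossible. So $\tau$ is fixed-point-free, all $\ell(\ell-1)$ indices pair into $\ell(\ell-1)/2$ orbits of size exactly $2$, and the rank bound follows cleanly. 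Note that if there really were $\ell$ fixed points as you claimed, a naive count from your orbit structure would give a rank bound of $\ell+\ell(\ell-2)/2=\ell^2/2>\ell(\ell-1)/2$, so your ``careful count'' line does not actually support the stated bound. (One could alternatively observe that a fixed row $r$ would satisfy $r=-q^{2\bar\alpha\ell}r$ and hence vanish for generic $\alpha$, which would also salvage the bound, but you did not say this and the cleanest fix is simply to verify the involution has no fixed points.) The final conclusion is correct, but the extra bookkeeping you added is where the error crept in.
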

\begin{proof}
Let $W$ be a simple object in $\mathcal{D}^{\aleph}_{\bar \alpha}$.  Let $\alpha\in\C\setminus\Z$ be a lift of $\bar \alpha$.  Then for any  $0\le k\le \ell-2,0\le i\le\ell-1$, from the last two lemmas we have 
\begin{align*}
\label{}
  S'(V(\lambda^k_{\alpha+i}),W)  & =q^{2\bar \alpha \ell}S'(A,W)S'(V(\lambda^k_{\alpha+i}),W)  \\
    &  =q^{2\bar \alpha \ell}S'(A\otimes V(\lambda^k_{\alpha+i}),W)\\
    &=q^{2\bar \alpha \ell}S'(\bar{\mathbb{C}} \otimes V(\lambda^{\ell-2-k}_{\alpha+i+k+1}),W)\\
    &=-q^{2\bar \alpha \ell}S'( V(\lambda^{\ell-2-k}_{\alpha+i+k+1}),W).
\end{align*}
Since $-q^{2\bar \alpha \ell}$ does not depend on the simple module $W$ in $\mathcal{D}^{\aleph}_{\bar \alpha}$ then the lines in the $S$-matrix determined by $V(\lambda^k_{\alpha+i})$ and $V(\lambda^{\ell-2-k}_{\alpha+i+k+1})$ are proportional and the corollary follows.  
\end{proof}

We end the paper with the following questions:
\begin{itemize}
\item Is $A^{\otimes2}\simeq\sigma(\bar 1, 0)$ in $\mathcal{D}^{\aleph}$?
\item If so, is $\mathcal{D}^{\aleph}$ a relative
  modular category with translation group enriched with $A$?
\item If not, can we embbed $\mathcal{D}^{\aleph}$ in
  such a relative modular category?
  \item Is there a is a relative modular category with a translation group with $\sqrt{\sigma}$?
\end{itemize}

\end{document}